\documentclass[12pt]{article}
\usepackage[margin=1in]{geometry} 
\usepackage{amsmath,amsthm,amssymb}
\usepackage{comment}
\usepackage{graphicx}
\usepackage{subcaption}
\usepackage[font=small]{caption}

\usepackage{xcolor}
\usepackage{hyperref}
\hypersetup{
    colorlinks,
    linkcolor={blue!90!black},
    citecolor={blue!100!black},
    urlcolor={blue!80!black}
}

\usepackage{tikz}
\usepackage{float}
\usetikzlibrary{decorations.pathreplacing,calligraphy}

\usepackage{authblk}
\title{
Unveiling low-dimensional patterns induced by convex non-differentiable regularizers
}
\author[1]{Ivan Hejný}
\author[1]{Jonas Wallin}
\author[1,2]{Małgorzata Bogdan}
\author[1]{Michał Kos}
\affil[1]{Department of Statistics, Lund University}
\affil[2]{Institute of Mathematics, University of Wroclaw}
\date{}

\usepackage{titlesec}
\titleformat*{\section}{\bfseries}
\titleformat*{\subsection}{\small\bfseries}

\newcommand\independent{\protect\mathpalette{\protect\independenT}{\perp}}
\def\independenT#1#2{\mathrel{\rlap{$#1#2$}\mkern2mu{#1#2}}}

\theoremstyle{plain}
\newtheorem{theorem}{Theorem}[section]
\newtheorem{proposition}[theorem]{Proposition}
\newtheorem{lemma}[theorem]{Lemma}
\newtheorem{corollary}[theorem]{Corollary}

\theoremstyle{definition}
\newtheorem{definition}[theorem]{Definition}
\newtheorem{example}[theorem]{Example}
\newtheorem{remark}[theorem]{Remark}

\theoremstyle{definition}
\newtheorem*{assumptionA}{Assumption A} 

\usepackage[normalem]{ulem}
\begin{document}

\maketitle

\begin{abstract}

Popular regularizers with non-differentiable penalties, such as Lasso, Elastic Net, Generalized Lasso, or SLOPE, reduce the dimension of the parameter space by inducing sparsity or clustering in the estimators' coordinates. In this paper, we focus on linear regression and explore the asymptotic distributions of the resulting low-dimensional patterns when the number of regressors $p$ is fixed, the number of observations $n$ goes to infinity, and the penalty function increases at the rate of $\sqrt{n}$. While the asymptotic distribution of the rescaled estimation error can be derived by relatively standard arguments, 
convergence of patterns requires a separate proof, which is yet missing from the literature, even for the simplest case of Lasso. To fill this gap, 
we use the Hausdorff distance as a suitable mode of convergence for subdifferentials, resulting in the desired pattern convergence. Furthermore, we derive the exact limiting probability of recovering the true model pattern. This probability goes to 1 if and only if the penalty scaling constant diverges to infinity and the regularizer-specific asymptotic irrepresentability condition is satisfied. We then propose simple two-step procedures that asymptotically recover the model patterns, irrespective of whether the irrepresentability condition holds or not. 

Interestingly, our theory shows that Fused Lasso cannot reliably recover its own clustering pattern, even for independent regressors. It also demonstrates how this problem can be resolved by ``concavifying'' the Fused Lasso penalty coefficients. Additionally, sampling from the asymptotic error distribution facilitates comparisons between different regularizers. We provide short simulation studies showcasing an illustrative comparison between the asymptotic properties of Lasso, Fused Lasso, and SLOPE.

\end{abstract}
\section{Introduction}

Consider the linear model $y = X\beta^0 + \varepsilon$,
where $X \in  \mathbb{R}^{n\times p}$ is the design matrix, $\beta^0 \in \mathbb{R}^p$ is the vector of regression coefficients, and $\varepsilon \in \mathbb{R}^n$ is the random noise vector with independent identically distributed entries $\varepsilon_1,\ldots,\varepsilon_n$. We consider regularized estimators of the form
\begin{equation}\label{main objective}
\hat{\beta}_n = \underset{\beta\in\mathbb{R}^p}{\operatorname{argmin}}\; \dfrac{1}{2}\Vert y-X\beta\Vert_2^2 + f_n(\beta),
\end{equation}
where $f_n$ is a convex penalty function. Incorporating the penalty function often allows one to obtain unique solutions to the above minimization problem when $p>n$. However, the advantages of penalization are apparent even when $n>p$, where the penalty stabilizes the variance of the estimators and often substantially reduces their mean errors compared to the classical least squares estimators. Further reduction of the estimation and prediction error can be obtained in situations where the vector of coefficients belongs to some lower-dimensional space.
Identification of such lower-dimensional patterns is, in some cases, achieved by penalizing with non-differentiable penalty functions, such as those defined through some modification of the $\ell^1$ norm. These include the popular Lasso, Elastic Net, SLOPE, or Generalized Lasso, which comprises, in particular, the Fused Lasso. \cite{tibshirani1996regression, Tib2005fusedLasso, tibshirani2011solution, bogdan2015slope,
ZouHastie2005ElasticNet}.

These regularizers induce sparsity or clustering \footnote{ In the context of SLOPE, by a cluster of $\beta$, we mean a subset of $\{1, \dots,p\}$, where $\vert \beta_i\vert$ is constant. In the context of Fused Lasso, a cluster means a set of consecutive indices where $\beta_i$ have the same value.} in the estimate, enabling them to exploit  the ``underlying structure'' of the signal vector $\beta^0$, which can improve the estimation properties of $\hat{\beta}$. 
For example, Lasso has the ability to recover zero elements of $\beta^0$, by correctly estimating them, or at least some of them, as $0$. Fused Lasso can additionally discover consecutive clusters in $\beta^0$, by setting consecutive values in the estimate to the same value. SLOPE has the ability to recover the most refined patterns in $\beta^0$, including clusters, where signs might differ, and the coordinates of the cluster are non-consecutive. For example, in
\begin{align*}
   \beta^0 = [\textcolor{purple}{1.7}, \textcolor{purple}{1.7}, 2.3, \textcolor{purple}{1.7}, \textcolor{blue}{0}],
\end{align*}
the purple cluster in $\beta^0$ is discoverable\footnote{By this we mean that if the covariates are not strongly correlated, SLOPE has positive probability that $\hat{\beta}_1=\hat{\beta_2}=\hat{\beta}_4$. This probability is zero for Fused Lasso/Lasso. } by SLOPE, but not by Fused Lasso, which can only cluster the first two coefficients. In this example, Lasso can reduce the dimension by $1$, Fused Lasso by $2$, and SLOPE by $3$. 

In the aforementioned examples, the regularizers share a common form 
\begin{equation}\label{general linear form intro}
    f(\beta)=\max\{v_1^T\beta,\dots,v_k^T\beta \} +g(\beta),
\end{equation}
where $v_1,\dots,v_k$ are the regularizer specific vectors in $\mathbb{R}^p$, and $g(\beta)$ is a convex differentiable function. The structural information that the regularizer $f$ can access about $\beta$ is captured by 
the \textit{pattern} of $f$ at $\beta$, defined as the set of indices, that maximizes $f(\beta)$
\begin{equation*}
    I_f(\beta):=\operatorname{argmax}_{i\in\{1,\dots,k\}}v_i^T\beta + g(\beta).
\end{equation*}

The above definition of pattern corresponds to the notion of an ``active index set'' in \cite{Lewis2002ActiveSets}. When $ g = 0 $, $ f(\beta) $ is a polyhedral gauge and the pattern can be equivalently defined using the subdifferential \cite{schneider2022geometry, graczyk2023pattern}. We define the \textit{set of all patterns} of $f$ as the (finite) image of the pattern map $I_f:\mathbb{R}^p\rightarrow \mathcal{P}(\{1,\dots,k\})$\footnote{$\mathcal{P}(\{1,\dots,k\})$ is the power set of $\{1,\dots,k\}$.}, and denote it by $\mathfrak{P}_f=\{I_f(\beta):\beta\in\mathbb{R}^p\}$. If the regularizer $f$ is known from the context, we drop the subscripts and write $I=I_f$ and $\mathfrak{P}=\mathfrak{P}_f$. 


\begin{figure}[ht]
    \centering
\begin{tikzpicture}[scale=1.4]

\draw [-latex] (1,0)-- (7,0) node[right = 5pt, black]{$\textcolor{black}{\boldsymbol{\gamma}}$};

\draw [-latex, dash pattern = on 2pt off 1pt, color = blue ](3,0.8)--(3,0.1);
\draw [-latex, dash pattern = on 2pt off 1pt ](6,0.8)--(6,0.1);

\begin{scriptsize}

\draw [fill = black] (1,0) circle (1pt) node[above = 5pt, black]{$0$};
\draw (3,0) circle (1pt);
\draw (6,0) circle (1pt);

\draw [decorate, decoration = {calligraphic brace, mirror, amplitude=5pt}] (1,-0.2) --  (3-0.05,-0.2);

\draw [decoration ={calligraphic brace, mirror, amplitude=5pt}, decorate] (3+0.05,-0.2) -- (6,-0.2);

\node[align=left] at (1.9,-1) {$\gamma<1/2$\\[4pt] $\hat{\beta}_n \sim OLS$};

\node[align=left] at (3.3,1.2) { \textcolor{blue}{$\hat{\beta}_n$ clusters} \\[4pt]$\textcolor{blue}{\boldsymbol{\gamma}=1/2}$};

\node[align=left] at (4.5,-0.9) {\textcolor{purple}{$1/2<\boldsymbol{\gamma}<1$}};

\node[align=left] at (6.5,1.2) { $\hat{\beta}_n$ inconsistent \\[4pt]$\gamma=1$};

\end{scriptsize}

\end{tikzpicture}
\caption{Scaling regimes, when $\hat{\beta}_n$ minimizes $\Vert y-X\beta\Vert/2+n^{\gamma}f(\beta)$, for fixed $p$ and $n\rightarrow\infty$. For $\gamma<1/2$, $\hat{\beta}_n$ is asymptotically equivalent to the OLS, there is no model selection. For $\gamma=1/2$, $\sqrt{n}(\hat{\beta}_n-\beta^0)$ converges in distribution, and $lim_{n\rightarrow\infty}\mathbb{P}[I(\hat{\beta}_n)=I(\beta^0)]\in(0,1)$. When $1/2<\gamma<1$, $\sqrt{n}(\hat{\beta}_n-\beta^0)$ diverges, and $\mathbb{P}[I(\hat{\beta}_n)=I(\beta^0)]$ converges to $1$, if the irrepresentability condition holds. }
\label{figure scaling regimes}
\end{figure}
We shall say that $\hat{\beta}_n$ recovers the $f-$ pattern of $\beta^0$, if $I_f(\hat{\beta}_n)=I_f(\beta^0)$.  Great effort has been made in the last two decades to establish conditions under which the model pattern is recovered. This turns out to be a nontrivial issue, even in the setup with $p$ fixed and $n$ going to infinity. To study this asymptotics, we consider scaling the regularizer in (\ref{main objective}) as $f_n(\beta)=n^{\gamma}f(\beta)$, where $f$ is some fixed penalty function. In terms of model selection, there are essentially two interesting scaling regimes, corresponding to $\gamma=1/2$ and $\gamma\in(1/2,1)$, summarized in Figure \ref{figure scaling regimes}.
Under strong scaling, when $f_n$ grows as $n^{\gamma}$ for $1/2<\gamma<1$, conditions for the exact recovery of support for the Lasso estimator have been explored in \cite{zhao2006model}, \cite{jia2015preconditioning} and for the recovery of the pattern with SLOPE in \cite{bogdan2022pattern}. A general approach to model consistency of partly smooth regularizers was developed in \cite{vaiter2017model}. In this paper, we investigate the ``classical'' weak scaling regime, when the penalty scaling is exactly of order $\sqrt{n}$. When $f_n \sim n^{1/2} f$, the error $\hat{u}_n=\sqrt{n}(\hat{\beta}_n-\beta^0)$ converges to a limiting distribution as in \cite{fu2000asymptotics}, and the probability that (\ref{main objective}) asymptotically recovers its own true model is strictly between $0$ and $1$. A limiting distribution of error $\hat{u}_n$, which selects patterns with positive probability, exists only when $\gamma=1/2$. This also enables some quantitative comparison of various methods based on their estimation and model selection accuracy. 

One of the primary contributions of the paper lies in providing a full characterization of the asymptotic model selection properties of a wide range of regularized estimators, when the penalty $f_n$ increases at the rate $n^{1/2}$ and $p$ remains fixed. To the best of our knowledge, a formal account on the model selection properties in this regime is still missing. In the seminal paper by Knight and Fu \cite{fu2000asymptotics}, the authors show that the rescaled error $\hat{u}_n=\sqrt{n}(\hat{\beta}_n-\beta^0)$ converges weakly to a limiting distribution $\hat{u}$. 
However, this does not imply weak convergence of $sgn(\hat{u}_n)$ to $sgn(\hat{u})$, because the sign function is discontinuous, rendering the continuous mapping theorem inapplicable. To elucidate the subtlety of the matter, we demonstrate that $sgn(\hat{u}_n)$ fails to converge weakly to $sgn(\hat{u})$ when $\hat{u}_n$ is the error obtained by penalizing with the convex penalty given by $\max\{\beta_1,\beta_2^2\}$, (see Appendix~\ref{appendix counterexample pattern convergence}). We are not aware of any rigorous argument in the literature that addresses this issue, not even in the simplest case of Lasso. This motivates the following definition:
\begin{definition}
    Let $f$ be a regularizer of the form (\ref{general linear form intro}) and $(\hat{u}_n)_{n\in\mathbb{N}}$ a sequence of random vectors in $\mathbb{R}^p$. We say that $\hat{u}_n$ converges \textit{weakly in $f-$pattern} to a random vector $\hat{u}$ if 
    \begin{equation}\label{pattern convergence}
        \lim_{n\rightarrow \infty}\mathbb{P}[I_f(\hat{u}_n)=\mathfrak{p}]=\mathbb{P}[I_f(\hat{u})=\mathfrak{p}],
    \end{equation}
    for every pattern $\mathfrak{p}\in\mathfrak{P}_f$.
\end{definition}

  One of our main contributions is showing that for a regularizer $f$ of the form~(\ref{general linear form intro}), the errors $\hat{u}_n=\sqrt{n}(\hat{\beta}_n-\beta^0)$ converge weakly in pattern to the asymptotic error $\hat{u}$, see Theorem~\ref{sqrt-asymptotic}, Theorem~\ref{main theorem pattern convergence of convex sets}, Corollary~\ref{asymptotic sampling corollary general}.

We derive the probability of recovering the true pattern in the low-dimensional limit for regularizers of the form~(\ref{general linear form intro}) (Theorem~\ref{general pattern recovery}). In relation to this, we investigate the irrepresentability condition \cite{zhao2006model, vaiter2017model, bogdan2022pattern}, under which correct recovery occurs with probability converging to 1 as penalty scaling increases (Corollary \ref{pattern recovery exponential bound}). 

Famously, under suitable penalty scaling, Adaptive Lasso \cite{HuiZouAdaptiveLasso} recovers the true model with probability going to one, irrespective of covariance $C$. A different second-order method, designed to recover the pattern, has been proposed in \cite{graczyk2023pattern} based on thresholding an initial estimate. We expand on this idea and show that the proposed two-step procedure (\ref{two-step procedure}) recovers the true model pattern with high probability, regardless of the covariance structure of the regressors (Lemma~ \ref{two-step proximal method lemma}, Theorem ~\ref{two-step proximal method theorem}). The procedure uses an initial estimate of $\beta^0$ and then regularizes it with penalty $f$. Finally, we apply the general theory to show that, under the independence of the regressors, Fused Lasso cannot recover all its patterns, even for strong penalty scaling. As a remedy, we suggest the Concavified Fused Lasso (Proposition~\ref{concavification of Fused Lasso}), by ``concavifying'' the tuning parameters of the Fused Lasso, which surprisingly yields exact pattern recovery of the signal. The auxiliary proofs and results are given in Appendix~\ref{Appendix}.

\section{Asymptotic distribution for the standard loss }

 Consider the linear model $y = X\beta^0 + \varepsilon$, with $X=(X_1,..,X_n)^{T}$, where $X_1, X_2, \dots$ are i.i.d. centered random vectors in $\mathbb{R}^p$ with the covariance matrix $C$. Further assume $\varepsilon_1, \varepsilon_2, \dots$ are i.i.d. centered random variables with variance $\sigma^2$ and $X\independent\varepsilon$. We begin by considering the minimizer (\ref{main objective}) for an arbitrary sequence of convex functions $f_n$:

 \begin{align}\label{SLOPE-estimator_n}
     \hat{\beta}_n &= \underset{\beta\in\mathbb{R}^p}{\operatorname{argmin }}  \dfrac{1}{2}\Vert y-X\beta\Vert^2_{2} + f_n(\beta)\\
     &=\underset{\beta\in\mathbb{R}^p}{\operatorname{argmin }}\dfrac{1}{2}(\beta-\beta^0)^{T}X^{T}X(\beta-\beta^0)-(\beta-\beta^0)^{T}X^{T}\varepsilon+\Vert\varepsilon\Vert_{2}^2/2 + f_n(\beta).\label{expanded slope estimator}
 \end{align}
 
 For fixed $p$ and $n\rightarrow\infty$, it follows from the law of large numbers and central limit theorem that:
 \begin{equation}\label{W_n C_n convergence}
     C_n:=\dfrac{1}{n}X^{T}X\overset{a.s.}{\longrightarrow} C\hspace{0.4cm}\text{ and }\hspace{0.4cm} W_n:=\dfrac{1}{\sqrt{n}}X^{T}\varepsilon\overset{d}{\longrightarrow} W\sim\mathcal{N}(0,\sigma^2C),
 \end{equation}
 and $\Vert
 \varepsilon\Vert^2_2/n\overset{a.s.}{\longrightarrow}\sigma^2$. Furthermore, we recall the definition of the directional derivative of a function $f:\mathbb{R}^p\rightarrow \mathbb{R}$ at a point $x$ in direction $u$:
 \begin{equation*}
     f'(x; u):= \operatorname{lim}_{t\downarrow 0} \dfrac{f(x+t u)-f(x)}{t}.
 \end{equation*}
 For convex $f$, the directional derivative always exists (see, for example, Theorem 23.1 \cite{rockafellar2015convex} ).
 The following statement and proof are direct extensions of the results in \cite{fu2000asymptotics}.

\begin{theorem}\label{sqrt-asymptotic}
Let $f:\mathbb{R}^p\rightarrow \mathbb{R}$ be any convex penalty function and $f_n=n^{1/2}f$. Assume $C$ is positive definite. Then $\hat{u}_n:= \sqrt{n}(\hat{\beta}_n-\beta^0)\overset{d}{\longrightarrow}\hat{u}$, where
\begin{align}
    \hat{u}&:=\textup{argmin}_{u} V(u),\nonumber\\
    V(u) &= \dfrac{1}{2}u^{T}Cu-u^{T}W+ f'({\beta^0};u)\label{V(u)},
\end{align}
with $W\sim\mathcal{N}(0,\sigma^2C)$, and $f'({\beta^0};u)$ the directional derivative of $f$ at $\beta^0$ in direction $u$. 

More generally, the result holds for any sequence of convex penalties of the form $f_n = n^{1/2} (f + \rho_n)$, such that $\rho_n(\beta)\rightarrow 0$ for every $\beta$, and $\rho_n$ are Lipschitz continuous with Lipschitz constants $c_n\rightarrow 0$ as $n\rightarrow\infty$. 
\end{theorem}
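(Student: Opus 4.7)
The plan is to extend the Knight–Fu argument \cite{fu2000asymptotics} via the convexity lemma for argmin convergence. Reparametrize by $u=\sqrt n(\beta-\beta^0)$ and, from the objective (\ref{expanded slope estimator}), subtract the $u$-independent constant $\tfrac12\|\varepsilon\|_2^2 + f_n(\beta^0)$ to obtain a new objective
\[
V_n(u) = \tfrac12 u^T C_n u - u^T W_n + \sqrt n\bigl[f(\beta^0+u/\sqrt n)-f(\beta^0)\bigr] + \sqrt n\bigl[\rho_n(\beta^0+u/\sqrt n)-\rho_n(\beta^0)\bigr],
\]
whose minimizer is exactly $\hat u_n$. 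Each $V_n$ is convex in $u$. The proposed limit $V(u)$ in (\ref{V(u)}) is strictly convex, since $C\succ 0$, while $f'(\beta^0;\cdot)$ is positively $1$-homogeneous and hence at most linear in $u$; consequently $V$ is coercive with a unique minimizer $\hat u$.

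Next I would establish pointwise convergence $V_n(u)\to V(u)$ in distribution for every fixed $u$. The quadratic and linear terms converge jointly by (\ref{W_n C_n convergence}) and Slutsky's theorem. For the $f$-term, the definition of the directional derivative of a convex function, applied with $t=1/\sqrt n\downarrow 0$, gives $\sqrt n[f(\beta^0+u/\sqrt n)-f(\beta^0)]\to f'(\beta^0;u)$; existence of this one-sided limit is guaranteed by convexity (Theorem 23.1 of \cite{rockafellar2015convex}). For the perturbation, the Lipschitz hypothesis yields
\[
\bigl|\sqrt n\bigl[\rho_n(\beta^0+u/\sqrt n)-\rho_n(\beta^0)\bigr]\bigr| \;\le\; \sqrt n\cdot c_n\cdot \|u\|/\sqrt n \;=\; c_n\|u\| \;\longrightarrow\; 0,
\]
so this term vanishes in the limit, uniformly on every bounded set of $u$.

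The final step is to invoke the convexity–argmin lemma (e.g.\ Geyer, or Hjort–Pollard): random convex functions $V_n$ converging in distribution pointwise to a strictly convex coercive $V$ automatically converge uniformly on compacts, and their minimizers $\hat u_n$ converge in distribution to the unique minimizer $\hat u$ of $V$. The extra ingredient beyond pointwise convergence is tightness of $(\hat u_n)$, which follows from coercivity of $V$: for large $R$ one can lower bound $\inf_{\|u\|=R} V_n(u)$ by $V_n(0)+\delta$ with probability tending to one, forcing $\|\hat u_n\|\le R$ with high probability. The main obstacle is this tightness / uniform-on-compacts step in the stochastic setting; the only genuinely new ingredient beyond Knight–Fu is absorbing the vanishing perturbation $\rho_n$, which is handled directly by the displayed Lipschitz estimate above.
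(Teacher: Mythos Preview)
Your proposal is correct and follows essentially the same approach as the paper: reparametrize to obtain the convex objectives $V_n(u)$, establish pointwise convergence $V_n(u)\overset{d}{\to}V(u)$ term by term (Slutsky for the quadratic and linear parts, the definition of the directional derivative for the $f$-term, and the displayed Lipschitz estimate for the $\rho_n$-term), and then invoke a convexity-based argmin result for the final step. The only cosmetic difference is that you spell out the coercivity/tightness mechanism explicitly, whereas the paper simply cites Theorem~2 of \cite{fu2000asymptotics} (alternatively Theorem~3.2.2 combined with Problem~1.6.1 in \cite{vanderVaart1996}) for the same conclusion.
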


\begin{proof}
Substituting $u=\sqrt{n}(\beta-\beta^0)$ in (\ref{expanded slope estimator}), it follows that $\hat{u}_n$ minimizes the convex objective function
\begin{equation}\label{V_n(u)}
     V_n(u):=\dfrac{1}{2n}u^{T}X^{T}Xu-u^{T}\dfrac{1}{\sqrt{n}} X^{T}\varepsilon + f_n(\beta^0+u/\sqrt{n})-f_n(\beta^0).
\end{equation}
 Let us now study the asymptotic behavior of $V_n(u)$. The first two terms in $V_n(u)$ converge by (\ref{W_n C_n convergence}) and Slutsky to $u^{T}Cu/2-u^{T}W$. 

The Lipschitz constants $c_n$ of $\rho_n$ converge to zero, which yields, for sufficiently large $n$,
\begin{equation*}
n^{1/2}\vert\rho_n(\beta^0+u/\sqrt{n})-\rho_n(\beta^0)\vert \leq c_n \Vert u\Vert \rightarrow 0.
\end{equation*}

Thus, 
\begin{align*}
    f_n(\beta^0+u/\sqrt{n})-f_n(\beta^0) & = n^{1/2}(f(\beta^0+u/\sqrt{n})-f(\beta^0)) + n^{1/2}(\rho_n(\beta^0+u/\sqrt{n})-\rho_n(\beta^0))
\end{align*}
converges to the directional derivative $f'(\beta^0;u)$.
This, and another application of Slutsky, shows that $V_n(u)\overset{d}{\longrightarrow}V(u)$ for every $u\in\mathbb{R}^n$. By the convexity and uniqueness of the minimizer of $V(u)$, we obtain weak convergence of the minimizers $\hat{u}_n\overset{d}{\longrightarrow}{\hat{u}}$ by replicating the argument in Theorem 2 \cite{fu2000asymptotics}. 
\end{proof}
We are aware that the last step in the proof of Theorem 2 \cite{fu2000asymptotics} refers to an unpublished manuscript. However, the conclusion also follows by combining Theorem 3.2.2 in \cite{vanderVaart1996} with Problem 1.6.1 \cite{vanderVaart1996}.

The minor generalization from $f_n=n^{1/2}f$ to $f_n=n^{1/2}(f+\rho_n)$ in Theorem \ref{sqrt-asymptotic} covers penalty sequences of the form  
$f_n(\beta)=\lambda^n\Vert \beta\Vert$, where $\Vert \cdot \Vert$ is any norm on $\mathbb{R}^p$ and $\lambda^n/\sqrt{n}\rightarrow \lambda\geq 0$.
The asymptotics for the Lasso and Ridge regularizers are covered in \cite{fu2000asymptotics}. For the Ridge penalty $f(\beta)=\lambda\Vert \beta\Vert_2^2/2$, a direct calculation yields $f'(\beta^0;u)=\lambda\sum_{i=1}^p\beta^0_i u_i=\lambda u^T\beta^0$. The asymptotic error $\hat{u}$ minimizes $u^TCu/2-u^T W + \lambda u^T\beta^0$, and hence $\hat{u}=C^{-1}(W-\lambda\beta^0)\sim\mathcal{N}(-\lambda C^{-1}\beta^0,\sigma^2C^{-1})$.

Furthermore, the objective (\ref{V_n(u)}) with $f_n=n^{1/2}f$ and the objective (\ref{V(u)}) give optimality conditions for $\hat{u}_n$ and  $\hat{u}$ respectively: 
\begin{align}\label{main optimality condition}
    0&\in \partial_u V_n(u) = C_nu-W_n + \partial f(\beta^0+u/\sqrt{n}),\nonumber\\ 
    0&\in \partial_u V(u)\hspace{0,1cm} = Cu-W +\partial_u f'({\beta^0};u),
\end{align}
where $C_n, W_n$ are as in (\ref{W_n C_n convergence}) and we denote by $\partial_u f'(\beta^0;u)$ the subdifferential of the function $u\mapsto f(\beta^0; u)$.
Note that for a convex function $g:\mathbb{R}^p\rightarrow \mathbb{R}$ the subdifferential at $u\in\mathbb{R}^p$ is the set
\begin{equation*}
    \partial g(u)=\{v\in\mathbb{R}^p: g(u) + \langle v, \tilde{u}-u\rangle \leq g(\tilde{u})\hspace{0,1cm}\forall \tilde{u}\in\mathbb{R}^p\}. 
\end{equation*}
Describing the subdifferential for shrinkage estimators can be non-trivial and will be studied in the next section on subdifferential and pattern. 
However, if the proximal operator
\begin{equation*}
\text{prox}_{f'(\beta^0; \cdot)}(y):=\underset{u\in\mathbb{R}^p}{\operatorname{argmin }}\hspace{0.2cm} (1/2)\Vert u-y\Vert_2^2 + f'(\beta^0; u),
\end{equation*}
of the directional derivative $u\mapsto f'(\beta^0; u)$ is known, we can use proximal methods to solve the optimization problem (\ref{V(u)}). The proximal operator of the directional SLOPE derivative is described in the Appendix \ref{proximal operator}, and used for simulations in Section \ref{simulations}. We also refer the reader to \cite{pmlr-v206-larsson23a}, where the directional derivative of the SLOPE penalty is used for a coordinate descent algorithm for SLOPE.

\section{Pattern and Subdifferential}
So far, we have established weak convergence of the error $\hat{u}_n$. However, this does not guarantee any type of control over the clustering/sparsity behavior of the regularizer. We refer the reader to Appendix \ref{appendix counterexample pattern convergence} for an example where $\hat{u}_n$ converges to $\hat{u}$ in distribution, but $sgn(\hat{u}_n)$ fails to converge in distribution to $sgn(\hat{u})$. In fact, clusters, or more generally, model patterns, can be broken by infinitesimal perturbations that are ``invisible'' to the convergence in distribution. This necessitates a new approach, which relies on studying the subdifferential of the regularizer. It turns out that the limiting behavior of model patterns will be determined by (\ref{main optimality condition}), as long as $\partial_uf_n(\beta^0+u/\sqrt{n})$ converges in the Hausdorff distance to $\partial f'(\beta^0;u)$. We note that a related mode of set convergence, the Painlevé--Kuratowski convergence, was used by Geyer in \cite{geyer1994asymptotics} to study the asymptotics of constrained M-estimators and later revisited in \cite{shapiro2000asymptotics}. The notion of a general pattern was already introduced and explored in \cite{graczyk2023pattern}. We use a slightly different but equivalent definition of patterns through ``active'' sets \cite{Lewis2002ActiveSets}.  

Given a finite index set $\mathcal{S}$, we consider a penalty of the form
\begin{align}\label{general linear framework}
     f(x) &= \max\{\langle v_i, x\rangle : i\in\mathcal{S}\} + g(x),
\end{align}
where $g$ is continuously differentiable, convex and $v_i$ are finitely many distinct vertices in $\mathbb{R}^p$ such that $\forall i\in\mathcal{S}, \;v_i\notin con\{v_j: j \neq i\}$. We define the \textit{pattern} of the penalty $f$ at $x$ as the set of indices 
\begin{align*}
I(x)&=\operatorname{argmax}_{i\in\mathcal{S}}\langle v_i,x \rangle.
\end{align*}
We shall denote the set of all patterns $\mathfrak{P}=\{I(x): x\in \mathbb{R}^p\}$, and its elements interchangeably by $\mathfrak{p}, \mathfrak{p}_x$ or $I(x)$. Importantly, under (\ref{general linear framework}), one can verify that the sets of constant pattern $I^{-1}(\mathfrak{p})=\{x\in\mathbb{R}^p: I(x)=\mathfrak{p}\}$ are convex. 
Note that the pattern $I(x)$ only depends on the polyhedral gauge $h(x)=\max\{\langle v_i, x\rangle : i\in\mathcal{S}\}$. Moreover, its subdifferential is given by $\partial h(x) = con\{v_i:i\in I(x)\}$ and 
\begin{equation}\label{pattern as constant subdifferential}
    I(x)=I(y)\iff \partial h(x) =\partial h(y).
\end{equation} The equivalence in (\ref{pattern as constant subdifferential}) fully characterizes the patterns as the sets of constant subdifferential,  which is used as a definition of patterns in \cite{graczyk2023pattern}. By (\ref{pattern as constant subdifferential}), there is also one-to-one correspondence between patterns $I(x)$ and lower-dimensional faces $\partial h(x)$ of the polytope $\partial h(0)$, see \cite{schneider2022geometry}. 
Our framework allows for additional penalization with a smooth regularizer $g$. 
The subdifferential of  (\ref{general linear framework}) is
\begin{align}\label{subdifferential as convex hull}
\partial f(x) &= con\{v_i:i\in I(x)\}+\nabla g(x).
\end{align}
Further, one can show (see for instance \cite{hiriart2013convex, rockafellar2009variational}) that the directional derivative of $f$ at $x$ in direction $u$ satisfies
\begin{align}\label{directional derivative equation}
    f'(x;u) &= 
    \operatorname{max}_{i\in I(x)}\langle v_i,u\rangle + \langle \nabla g(x), u \rangle,\\
    \partial_u f'(x;u)&=con\{ v_i:i\in I_x(u)\}+\nabla g(x),\nonumber
\end{align}
where
\begin{align*}
    I_x(u)&:=\operatorname{argmax}_{i\in I(x)} \langle  v_i,u\rangle. 
\end{align*}
We call $I_x(u)$ the \textit{limiting pattern} of $u$ with respect to $x$. This is motivated by the fact that $I_x(u)=\lim_{\varepsilon\downarrow0}I(x+\varepsilon u)$, see Appendix \ref{appendix limiting pattern}.


We remark that if $f:\mathbb{R}^m\rightarrow\mathbb{R}$ is of the form (\ref{general linear framework}), then for any linear map $\psi:\mathbb{R}^p\rightarrow\mathbb{R}^m$, the composition $f\circ \psi:\mathbb{R}^p\rightarrow\mathbb{R}$ is also of the form (\ref{general linear framework}) and
\begin{equation}\label{composition of patterns}
    I_{f\circ \psi}(x)=I_{f}(\psi(x)),\hspace{0,8cm} \partial (f\circ \psi)(x)=\psi^T\partial f(\psi(x)).
\end{equation}
Also, any $f$ satisfying (\ref{general linear framework}) is partly smooth relative to the set of constant pattern $\mathcal{M}=I^{-1}(\mathfrak{p})$, for any pattern $\mathfrak{p}\in\mathfrak{P}$, see \cite{vaiter2017model, Lewis2002ActiveSets}.  

\subsection{Hausdorff distance}\label{Hausdorff distance and directional SLOPE derivative}

This section discusses some facts about Hausdorff distance, used later in proofs. It can be skipped if the reader wants to go directly to the main results in further sections.

Let $d(x,y)=\Vert x-y \Vert_2$ denote the standard Euclidean distance on $\mathbb{R}^p$. For $B\subset\mathbb{R}^p$, denote $B^{\delta}:=\{x\in\mathbb{R}^d: d(x,B)\leq\delta\}=\overline{B}+\overline{B_{\delta}(0)}$, where $\overline{B_{\delta}(x)}$ is the closed $\delta$-ball around $x$. For non-empty sets $A,B\subset\mathbb{R}^p$, the Hausdorff distance, also called the Pompeiu-Hausdorff distance, is defined as
\begin{equation*}
    d_H(A,B):=inf\{\delta\geq 0\vert A\subset B^{\delta}, B\subset A^{\delta}\};
\end{equation*}
see \cite{rockafellar2009variational}. The Hausdorff distance defines a pseudo-metric and yields a metric on the space of all closed, non-empty subsets of some bounded set $X\subset\mathbb{R}^p$. For a sequence of sets $A_n$, we write $A_n \overset{d_H}{\longrightarrow} A$, if $d_H(A_n,A)\rightarrow 0$ as $n\rightarrow\infty$. Convergence in the Hausdorff metric coincides with the Painlevé--Kuratowski convergence when the sequence $A_n$ is contained in a bounded set $X$. The Hausdorff metric is suitable for dealing with subdifferentials of real-valued convex  functions since these are compact. 
Note that for convergent sequences $A_n \overset{d_H}{\longrightarrow} A, A_n'\overset{d_H}{\longrightarrow} A'$, we have $A_n+A_n'\overset{d_H}{\longrightarrow} A+A'$, thus for any $\delta>0,\;$ $A_n^{\delta}\overset{d_H}{\longrightarrow}A^{\delta}$. 
Finally, for finitely many convergent sequences; $x_n^i\rightarrow x^i,\;i\in\mathcal{S}$, of points in $\mathbb{R}^p$:
\begin{equation}\label{corner convergence}
    con\{x^i_n: i\in\mathcal{S}\}\overset{d_H}{\longrightarrow}con\{x^i: i\in\mathcal{S}\},
\end{equation}
where $\vert\mathcal{S}\vert<\infty$. In particular, convergence of convex sets in Hausdorff distance is compatible with convergence in distribution of random vectors. The proof of the following Lemma is given in Appendix \ref{apppendix proofs}.

\begin{lemma}\label{Hausdorff lemma}
    Suppose $B_n\overset{d_H}{\longrightarrow}B$, where $B_n$ and $B$ are convex sets in $\mathbb{R}^p$. If $W_n\overset{d}{\longrightarrow}W$ for some $W$ with a continuous bounded density w.r.t. the Lebesgue measure on $\mathbb{R}^p$, then $\mathbb{P}[W_n\in B_n]\longrightarrow \mathbb{P}[W\in B]$.
\end{lemma}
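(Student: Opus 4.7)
The natural tool is the Portmanteau characterization of weak convergence, combined with the observation that any convex $B \subset \mathbb{R}^p$ has $\partial B$ of Lebesgue measure zero, so the bounded density of $W$ forces $\mathbb{P}[W \in \partial B] = 0$ and hence $\mathbb{P}[W \in B] = \mathbb{P}[W \in \overline{B}] = \mathbb{P}[W \in \mathrm{int}(B)]$. The strategy is to sandwich $B_n$ between a closed outer approximation $B^\delta$ of $B$ and an open inner approximation $G_\delta$ of $B$, apply Portmanteau in the two directions, and let $\delta \downarrow 0$.

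\textit{Upper bound.} Fix $\delta > 0$. The Hausdorff convergence gives $B_n \subset B^\delta = \overline{B} + \overline{B_\delta(0)}$ for $n$ large; since $B^\delta$ is closed, Portmanteau yields
\[
\limsup_n \mathbb{P}[W_n \in B_n] \;\leq\; \limsup_n \mathbb{P}[W_n \in B^\delta] \;\leq\; \mathbb{P}[W \in B^\delta].
\]
As $\delta \downarrow 0$, $B^\delta \downarrow \overline{B}$, so continuity of the measure gives $\mathbb{P}[W \in B^\delta] \to \mathbb{P}[W \in \overline{B}] = \mathbb{P}[W \in B]$.

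\textit{Lower bound.} The subtle step is that the inclusion $B \subset B_n^\delta$ does not directly place any set inside $B_n$. The convexity of $B_n$ is essential here via the key containment
\[
G_\delta := \{x \in \mathbb{R}^p : d(x, \mathbb{R}^p \setminus \overline{B}) > 2\delta\} \;\subset\; B_n \qquad \text{whenever } d_H(B_n, B) < \delta.
\]
To prove this, suppose $x \in G_\delta$ but $x \notin B_n$. Weak separation of $x$ from the convex set $B_n$ in $\mathbb{R}^p$ supplies a unit vector $v$ and $c \in \mathbb{R}$ with $\langle v, \cdot \rangle \leq c$ on $B_n$ and $\langle v, x \rangle \geq c$. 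Since $\overline{B_{2\delta}(x)} \subset \overline{B}$, the point $x + 2\delta v$ lies in $\overline{B} \subset B_n^\delta$, so there is $z \in B_n$ with $\|(x+2\delta v) - z\| \leq \delta$, giving $\langle v, x + 2\delta v \rangle \leq \langle v, z \rangle + \delta \leq c + \delta$. But $\langle v, x + 2\delta v \rangle \geq c + 2\delta$, contradiction. Since $G_\delta$ is open, Portmanteau gives $\liminf_n \mathbb{P}[W_n \in B_n] \geq \mathbb{P}[W \in G_\delta]$, and $G_\delta \uparrow \mathrm{int}(\overline{B})$ as $\delta \downarrow 0$, so $\mathbb{P}[W \in G_\delta] \to \mathbb{P}[W \in B]$.

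\textit{Main obstacle.} The hard part is the asymmetry of Hausdorff convergence: the outer containment $B_n \subset B^\delta$ is immediate, but extracting a set lying inside $B_n$ genuinely requires the separating-hyperplane argument above and uses the convexity hypothesis. The degenerate case $\mathrm{int}(\overline{B}) = \emptyset$ is harmless: then $\overline{B}$ has Lebesgue measure zero, so $\mathbb{P}[W \in B] = 0$, and the upper bound forces $\mathbb{P}[W_n \in B_n] \to 0$ as well.
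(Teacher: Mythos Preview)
Your proof is correct and follows essentially the same strategy as the paper: sandwich $B_n$ between an outer closed set $B^{\delta}$ and an open inner shrinkage of $B$, apply the Portmanteau lemma in each direction, and let $\delta\downarrow 0$ using that the convex boundary has Lebesgue measure zero. The only substantive difference is in how the inner containment is justified: the paper writes $B^{-\delta}\subset (B_n^{\delta})^{-\delta}=B_n^{\circ}$ and attributes the equality to convexity, whereas you prove the analogous inclusion $G_{\delta}\subset B_n$ directly via a separating-hyperplane argument. Your version is a bit more self-contained; the paper's is slightly terser but leaves that set identity unproved.
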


\begin{lemma}\label{Hausdorff subdifferential lemma}
    Let $f$ be as in (\ref{general linear framework}). Then for $f_n=n^{1/2}f$;
\begin{align}\label{general Hausdorff convergence}
  \partial_u f_n(x+u/\sqrt{n})\overset{d_H}{\longrightarrow}\partial_u f'(x;u). 
\end{align}

\end{lemma}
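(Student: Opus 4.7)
The statement will be reduced to a combinatorial stabilization of the pattern map along the sequence $x_n:=x+u/\sqrt{n}$, together with continuity of $\nabla g$. First I would apply the subdifferential chain rule to the affine rescaling $u\mapsto x+u/\sqrt{n}$: since $u\mapsto f_n(x+u/\sqrt{n})=\sqrt{n}\,f(x+u/\sqrt{n})$, one obtains $\partial_u f_n(x+u/\sqrt{n})=\sqrt{n}\cdot(1/\sqrt{n})\,\partial f(x+u/\sqrt{n})=\partial f(x_n)$. Using the explicit description (\ref{subdifferential as convex hull}),
\[ \partial f(x_n) \;=\; \operatorname{con}\{v_i: i\in I(x_n)\}+\nabla g(x_n). \]
The translation term $\nabla g(x_n)\to\nabla g(x)$ by continuous differentiability of $g$, and Hausdorff distance is compatible with Minkowski addition of uniformly bounded sets, so it remains to prove
\[ \operatorname{con}\{v_i: i\in I(x_n)\}\overset{d_H}{\longrightarrow}\operatorname{con}\{v_i: i\in I_x(u)\}. \]

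By (\ref{corner convergence}), the target reduces to the eventual set equality $I(x_n)=I_x(u)$ for all $n$ large enough. The defining condition $i\in I(x_n)$ rewrites, for every $j\in\mathcal{S}$, as
\[ \sqrt{n}\,\langle v_i-v_j,x\rangle \;\geq\; \langle v_j-v_i,u\rangle. \]
I would analyse this in two regimes. For $j\notin I(x)$, the quantity $\langle v_i-v_j,x\rangle$ is strictly positive when $i\in I(x)$ (and strictly negative when $i\notin I(x)$), so the left-hand side dominates for large $n$; this forces $I(x_n)\subseteq I(x)$ eventually, while automatically validating the inequality for any $i\in I(x)$ against such $j$. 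For $j\in I(x)$, the leading term vanishes, and the inequality collapses to $\langle v_i,u\rangle\geq\langle v_j,u\rangle$, i.e.\ to $i\in\operatorname{argmax}_{k\in I(x)}\langle v_k,u\rangle=I_x(u)$. Combining the two regimes yields $I(x_n)=I_x(u)$ for $n$ sufficiently large.

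The principal obstacle is precisely this step: the pattern map $I(\cdot)$ is inherently discontinuous, so its convergence cannot be extracted from any pointwise continuity property. The two-scale bookkeeping is essential—the order-$1$ inequalities trim $\mathcal{S}$ down to $I(x)$, while the order-$1/\sqrt{n}$ inequalities further refine within $I(x)$ to produce exactly $I_x(u)$. The vertex hypothesis $v_i\notin \operatorname{con}\{v_j:j\neq i\}$ enters implicitly by ensuring that distinct patterns give genuinely distinct convex hulls and that strict comparisons against $j\notin I(x)$ behave as expected. Once the eventual set equality $I(x_n)=I_x(u)$ is in hand, combining (\ref{corner convergence}) with the convergence of $\nabla g(x_n)$ delivers the Hausdorff convergence of the full subdifferential.
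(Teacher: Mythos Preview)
Your proof is correct and follows essentially the same approach as the paper: reduce via the chain rule to $\partial f(x+u/\sqrt{n})$, split into the polyhedral and smooth parts, and then prove the eventual stabilization $I(x+u/\sqrt{n})=I_x(u)$ by the same two-scale argument (first trim to $I(x)$ using strict inequalities for $j\notin I(x)$, then refine within $I(x)$ using the $u$-inner products). The paper packages the stabilization step as the identity $I_x(u)=\lim_{\varepsilon\downarrow 0}I(x+\varepsilon u)$ proved in Appendix~\ref{appendix limiting pattern}, but the content is identical to your two-regime analysis.
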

\begin{proof}
By chain rule, (\ref{subdifferential as convex hull}), and (\ref{stabilization of limiting pattern}) we have
\begin{align*}
     \partial_u f_n(x+u/\sqrt{n}) & = \partial f(x+u/\sqrt{n})\\
     &=con \left\{ v_i: i\in I(x+u/\sqrt{n})\right\} + \nabla g(x+u/\sqrt{n})\\
     & \overset{d_H}{\longrightarrow} con \left\{ v_i: i\in I_x(u)\right\} + \nabla g(x) = \partial_u f'(x;u)
\end{align*}
where the last equality is (\ref{directional derivative equation}). 
\end{proof}

We remark that (\ref{general Hausdorff convergence}) holds more generally for 
$f_n(x)=n^{1/2}(\max\{\langle v_i^n, x\rangle: i\in\mathcal{S}\}+g(x))$, where $v_i^n\rightarrow v_i$ for each $1\leq i\leq N$, provided that for some $M>0$; $I_n(x)=I(x)$ for every $x\in\mathbb{R}^p$ and $n\geq M$. The proof of this follows by the same argument as Lemma~\ref{Hausdorff subdifferential lemma} and (\ref{corner convergence}). This covers, for example, the case of SLOPE $f_n(x)=J_{\boldsymbol{\lambda}^n}(x) = \max\{ \langle P\boldsymbol{\lambda}^n,x\rangle: P\in\mathcal{S}_p^{+/-}\}$, where $\boldsymbol{\lambda}^n/\sqrt{n}\rightarrow \boldsymbol{\lambda}$ with strictly decreasing non-negative $\boldsymbol{\lambda}$. The condition $I_n(x)=I(x)$ for every $x\in\mathbb{R}^p$ is satisfied, as long as $\boldsymbol{\lambda}^n$ is a strictly decreasing vector. For details on the SLOPE pattern and subdifferential, see Appendix \ref{SLOPE subdifferential Appendix}. 

\subsection{Weak pattern convergence}
The following theorem strengthens the weak convergence of the error $\hat{u}_n=\sqrt{n}(\hat{\beta}_n-\beta^0)$, established in Theorem \ref{sqrt-asymptotic}. From now on, we assume that the penalty $f$ satisfies (\ref{general linear framework}), and that $f_n=n^{1/2}f$ in (\ref{main objective}).

\begin{theorem}\label{main theorem pattern convergence of convex sets}
For every convex set $\mathcal{K}\subset\mathbb{R}^p$: $\mathbb{P}[\hat{u}_n\in\mathcal{K}]\longrightarrow\mathbb{P}[\hat{u}\in\mathcal{K}]$ as $n\rightarrow\infty$. In particular, $\hat{u}_n$ converges weakly in pattern to $\hat{u}$:
\begin{equation*}
    \mathbb{P}[I(\hat{u}_n)=\mathfrak{p}]\xrightarrow[n\rightarrow\infty]{} \mathbb{P}[I(\hat{u})=\mathfrak{p}],
\end{equation*}
for any pattern \footnote{ The pattern function $I$ does not have to be induced by the same penalty $f$, which defines the minimizer $\hat{u}_n$, but by any penalty satisfying (\ref{general linear framework}). } $\mathfrak{p}\in\mathfrak{P}$.
\end{theorem}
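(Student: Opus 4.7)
My plan is to translate the event $\{\hat u_n\in\mathcal K\}$ into a noise event on $W_n$ through the optimality conditions (\ref{main optimality condition}), and then invoke the Hausdorff-probability Lemma~\ref{Hausdorff lemma}. Because for any penalty of the form~(\ref{general linear framework}) the sets $I^{-1}(\mathfrak p)$ are convex, the pattern convergence is an immediate corollary of the convex-set statement, so I can focus on proving $\mathbb P[\hat u_n\in\mathcal K]\to\mathbb P[\hat u\in\mathcal K]$ for an arbitrary convex $\mathcal K\subset\mathbb R^p$. Positive definiteness of $C$ (and, for $n$ large, of $C_n$) makes $V_n$ and $V$ strictly convex with unique minimizers, so
\[
\{\hat u_n\in\mathcal K\}=\{W_n\in T_n(\mathcal K)\},\qquad \{\hat u\in\mathcal K\}=\{W\in T(\mathcal K)\},
\]
where $T_n(\mathcal K):=\bigcup_{u\in\mathcal K}\bigl[C_n u+\partial f(\beta^0+u/\sqrt n)\bigr]$ and $T(\mathcal K):=\bigcup_{u\in\mathcal K}\bigl[Cu+\partial_u f'(\beta^0;u)\bigr]$.

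The key move will be a decomposition of $\mathcal K$ along the finitely many limiting pattern cones $L_{\mathfrak q}:=\{u:I_{\beta^0}(u)=\mathfrak q\}$, which are convex and partition $\mathbb R^p$ up to a Lebesgue-null boundary. On each $L_{\mathfrak q}$ the stabilization~(\ref{stabilization of limiting pattern}) freezes the polyhedral part of $\partial f(\beta^0+u/\sqrt n)$ to the constant polytope $V_{\mathfrak q}:=con\{v_i:i\in\mathfrak q\}$ for all $n$ sufficiently large, leaving only the smooth contribution $\nabla g(\beta^0+u/\sqrt n)$ depending on $u$. In the limit this smooth contribution equals the constant $\nabla g(\beta^0)$, so $T(\mathcal K\cap L_{\mathfrak q})=C(\mathcal K\cap L_{\mathfrak q})+V_{\mathfrak q}+\nabla g(\beta^0)$ is a Minkowski sum of an affine image of a convex set with a fixed polytope, hence convex. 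I would then combine Lemma~\ref{Hausdorff subdifferential lemma} with the continuity of Minkowski addition under the Hausdorff metric to obtain $T_n(\mathcal K\cap L_{\mathfrak q})\overset{d_H}{\longrightarrow} T(\mathcal K\cap L_{\mathfrak q})$.

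Applying Lemma~\ref{Hausdorff lemma} cell by cell (with $W\sim\mathcal N(0,\sigma^2 C)$, whose continuous bounded density is essential) then yields $\mathbb P[W_n\in T_n(\mathcal K\cap L_{\mathfrak q})]\to\mathbb P[W\in T(\mathcal K\cap L_{\mathfrak q})]$, and summing over the finitely many patterns $\mathfrak q$ completes the argument. Uniqueness of the minimizers forces the image sets $T_n(\mathcal K\cap L_{\mathfrak q})$ to be disjoint across distinct $\mathfrak q$ (a single $W_n$ can correspond to only one minimizer $\hat u_n$, which lies in exactly one cone), so the sum recovers $\mathbb P[\hat u_n\in\mathcal K]$. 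The main obstacle I expect is the cell-wise convexity of $T_n(\mathcal K\cap L_{\mathfrak q})$ for each finite $n$: images of convex sets under subdifferentials of convex functions need not be convex in general, but restricting to a single limiting pattern cone freezes the polyhedral part of $\partial f$ to a fixed polytope, and the remaining smooth $\nabla g$-perturbation is affine (in fact constant) in the limit and, for finite $n$, can be absorbed by a convex sandwich of vanishing Hausdorff width.
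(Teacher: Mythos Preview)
Your approach is essentially the paper's: translate $\{\hat u_n\in\mathcal K\}$ into a noise event via the optimality conditions, decompose $\mathcal K$ along the (limiting) pattern cells, and combine Lemmas~\ref{Hausdorff lemma} and~\ref{Hausdorff subdifferential lemma} cell by cell. However, there is one missing ingredient that the paper uses and that your sketch cannot do without.

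\textbf{The gap: no truncation via tightness.} You claim that on each cone $L_{\mathfrak q}=\{u:I_{\beta^0}(u)=\mathfrak q\}$ the stabilization~(\ref{stabilization of limiting pattern}) freezes the polyhedral part of $\partial f(\beta^0+u/\sqrt n)$ to $V_{\mathfrak q}$ ``for all $n$ sufficiently large.'' This is false on unbounded cells: the stabilization is pointwise in $u$, not uniform. For any fixed $n$ there exist $u\in L_{\mathfrak q}$ with $\|u\|$ large enough that $\beta^0+u/\sqrt n$ lies far from $\beta^0$ and carries a completely different pattern. The same issue breaks the Hausdorff convergence $T_n(\mathcal K\cap L_{\mathfrak q})\overset{d_H}{\longrightarrow}T(\mathcal K\cap L_{\mathfrak q})$: since $\|C_n u-Cu\|\le\|C_n-C\|_{\mathrm{op}}\|u\|$ is unbounded on an unbounded cell, $C_n(\mathcal K\cap L_{\mathfrak q})$ need not converge to $C(\mathcal K\cap L_{\mathfrak q})$ in $d_H$ even when $C_n\to C$ a.s. The paper handles this by first invoking tightness of $\hat u_n$ to restrict to bounded cells $\mathcal K^{\mathfrak p}=\mathcal K\cap I^{-1}(\mathfrak p)\cap B_M(0)$, paying an $\varepsilon$-price; on a bounded cell the stabilization \emph{is} uniform and the Hausdorff argument goes through. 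You need the same step.

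Once you truncate, the rest of your plan matches the paper. Your ``convex sandwich of vanishing Hausdorff width'' for the $\nabla g$ term is a correct way to phrase what the paper uses: on a bounded cell, $\nabla g(\beta^0+u/\sqrt n)\to\nabla g(\beta^0)$ uniformly, so $T_n(\mathcal K^{\mathfrak q})$ is squeezed between $\bigl(C_n\mathcal K^{\mathfrak q}+V_{\mathfrak q}+\nabla g(\beta^0)\bigr)^{\pm\delta_n}$ with $\delta_n\to0$, and the inner set is a genuine Minkowski sum of convex sets. Finally, note that the $L_{\mathfrak q}$'s already form a partition of $\mathbb R^p$ (no ``up to a null boundary'' is needed), so your disjointness/summation argument is clean once the truncation is in place.
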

\begin{proof}
For any $\varepsilon>0$, by tightness of $\hat{u}_n$ there exists $M>0$ s.t. $\mathbb{P}[\hat{u}_n\in\mathcal{K}\setminus B_{M}(0)]<\varepsilon $ $\forall n\in\mathbb{N}$, where $B_M(0)=\{u\in\mathbb{R}^p:\Vert u\Vert\leq M\}$ is the ball of radius $M$.
Consider the finite partition of $\mathbb{R}^p$ into convex sets of constant pattern $I^{-1}(\mathfrak{p})=\{u\in\mathbb{R}^p: I(u)=\mathfrak{p}\}$, for $\mathfrak{p}\in\mathfrak{P}$. For each $\mathfrak{p}\in\mathfrak{P}$ consider the bounded convex set $\mathcal{K}^{\mathfrak{p}}= \mathcal{K}\cap I^{-1}(\mathfrak{p})\cap B_{M}(0)$. Note that for large $n$, the subdifferential $\partial f(\beta^0+u/\sqrt{n})$ and $\partial_u f'(\beta^0;u)$ do not depend on the choice of $u\in\mathcal{K}^{\mathfrak{p}}$. Fixing any vector $u_{\mathfrak{p}}\in\mathcal{K}^{\mathfrak{p}}$, we get by optimality conditions $(\ref{main optimality condition})$:
\begin{align*}
    \mathbb{P}\left[\hat{u}_n\in \mathcal{K}^{\mathfrak{p}}\right] = & \mathbb{P}\left[W_n\in C_n \mathcal{K}^{\mathfrak{p}}+\partial f\left(\beta^0 +u_{\mathfrak{p}}/\sqrt{n}\right)\right]\\
    \rightarrow 
    & \mathbb{P}\left[W\in C\mathcal{K}^{\mathfrak{p}}+\partial_u f'(\beta^0;u_{\mathfrak{p}})\right]=\mathbb{P}\left[\hat{u}\in \mathcal{K}^{\mathfrak{p}}\right],
\end{align*}
where $C_n$ and $W_n$ are given by (\ref{W_n C_n convergence}) and the convergence follows by Lemmas \ref{Hausdorff lemma} and \ref{Hausdorff subdifferential lemma}. Consequently,
\begin{align*}
    \limsup_{n\rightarrow\infty}\mathbb{P}\left[\hat{u}_n\in \mathcal{K}\right]
    &\leq \limsup_{n\rightarrow\infty} \sum_{\mathfrak{p}\in\mathfrak{P}}\mathbb{P}\left[\hat{u}_n\in \mathcal{K}^{\mathfrak{p}}\right]+\varepsilon
    = \sum_{\mathfrak{p}\in\mathfrak{P}}\mathbb{P}\left[\hat{u}\in \mathcal{K}^{\mathfrak{p}}\right]+\varepsilon
    \leq\mathbb{P}\left[\hat{u}\in \mathcal{K}\right]+\varepsilon,
    \\
    \liminf_{n\rightarrow\infty}\mathbb{P}\left[\hat{u}_n\in \mathcal{K}\right]
    &\geq \liminf_{n\rightarrow\infty} \sum_{\mathfrak{p}\in\mathfrak{P}}\mathbb{P}\left[\hat{u}_n\in \mathcal{K}^{\mathfrak{p}}\right]
    = \sum_{\mathfrak{p}\in\mathfrak{P}}\mathbb{P}\left[\hat{u}\in \mathcal{K}^{\mathfrak{p}}\right]
    \geq\mathbb{P}\left[\hat{u}\in \mathcal{K}\right]-\varepsilon,
\end{align*}
which shows that $\hat{u}_n$ converges to $\hat{u}$ on all convex sets. Finally, setting $\mathcal{K}=I^{-1}(\mathfrak{p})$ for some $\mathfrak{p}\in\mathfrak{P}$, gives the weak convergence of $I(\hat{u}_n)$ to $I(\hat{u})$.
\end{proof}
As a consequence of Theorem \ref{main theorem pattern convergence of convex sets}, we can characterize the asymptotic distribution of $I(\hat{\beta}_n)$ in terms of $\hat{u}$. Recall that $\hat{\beta}_n = \beta^0 + \hat{u}_n/\sqrt{n}$.

\begin{corollary}\label{asymptotic sampling corollary general}
For any pattern $\mathfrak{p}\in\mathfrak{P}$,
\begin{equation*}
    \mathbb{P}[I(\hat{\beta}_n)=\mathfrak{p}]\xrightarrow[n\rightarrow\infty]{}\mathbb{P}[I_{\beta^0}(\hat{u})=\mathfrak{p}],
\end{equation*}
where $I_{\beta^0}(\hat{u})=lim_{\varepsilon\downarrow 0}I(\beta^0+\varepsilon \hat{u})$.
\end{corollary}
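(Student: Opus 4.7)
My plan is to rewrite $\mathbb{P}[I(\hat{\beta}_n)=\mathfrak{p}]$ as a probability of $\hat{u}_n$ landing in a fixed convex set, and then invoke Theorem~\ref{main theorem pattern convergence of convex sets}. Using $\hat{\beta}_n=\beta^0+\hat{u}_n/\sqrt{n}$, I have
\begin{equation*}
    \mathbb{P}[I(\hat{\beta}_n)=\mathfrak{p}]=\mathbb{P}[\hat{u}_n\in B_{n,\mathfrak{p}}],\qquad B_{n,\mathfrak{p}}:=\sqrt{n}\bigl(I^{-1}(\mathfrak{p})-\beta^0\bigr),
\end{equation*}
and I compare $B_{n,\mathfrak{p}}$ with the candidate limiting set $A_{\mathfrak{p}}:=\{u\in\mathbb{R}^p:I_{\beta^0}(u)=\mathfrak{p}\}$. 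The explicit formula $I_{\beta^0}(u)=\operatorname{argmax}_{i\in I(\beta^0)}\langle v_i,u\rangle$ from~(\ref{directional derivative equation}) shows that $A_{\mathfrak{p}}$ is cut out by linear equalities and strict inequalities in $u$, hence is a convex cone; moreover $A_{\mathfrak{p}}=\emptyset$ whenever $\mathfrak{p}\not\subseteq I(\beta^0)$.

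The core step is a pattern-stability lemma: for every $M>0$ there is $N$ such that $B_{n,\mathfrak{p}}\cap B_M(0)=A_{\mathfrak{p}}\cap B_M(0)$ for all $n\geq N$. I would prove it by exhibiting a threshold $\delta_0>0$, depending only on $\beta^0$ and the vertices $\{v_i\}_{i\in\mathcal{S}}$, such that $\Vert v\Vert<\delta_0$ implies
\begin{equation*}
    I(\beta^0+v)=\operatorname{argmax}_{i\in I(\beta^0)}\langle v_i,v\rangle=I_{\beta^0}(v).
\end{equation*}
Fixing any $i^*\in I(\beta^0)$, the strict gap $\min_{i\notin I(\beta^0)}\bigl(\langle v_{i^*},\beta^0\rangle-\langle v_i,\beta^0\rangle\bigr)>0$ ensures that indices outside $I(\beta^0)$ remain suppressed in $i\mapsto\langle v_i,\beta^0+v\rangle$ for $\Vert v\Vert$ small enough, so the argmax over $\mathcal{S}$ drops down to an argmax over $I(\beta^0)$. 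Substituting $v=u/\sqrt{n}$ for $u\in B_M(0)$ and exploiting positive-scaling invariance of $I_{\beta^0}$ gives the claimed equality on $B_M(0)$ as soon as $M/\sqrt{n}<\delta_0$.

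From here the argument is routine. Using tightness of $(\hat{u}_n)$ delivered by Theorem~\ref{sqrt-asymptotic}, I can choose $M$ with $\mathbb{P}[\hat{u}_n\notin B_M(0)]<\varepsilon$ uniformly in $n$ and $\mathbb{P}[\hat{u}\notin B_M(0)]<\varepsilon$. Combined with the coincidence on $B_M(0)$, this forces $\bigl|\mathbb{P}[\hat{u}_n\in B_{n,\mathfrak{p}}]-\mathbb{P}[\hat{u}_n\in A_{\mathfrak{p}}]\bigr|\leq 2\varepsilon$ for large $n$. Since $A_{\mathfrak{p}}$ is convex, Theorem~\ref{main theorem pattern convergence of convex sets} yields $\mathbb{P}[\hat{u}_n\in A_{\mathfrak{p}}]\to\mathbb{P}[\hat{u}\in A_{\mathfrak{p}}]=\mathbb{P}[I_{\beta^0}(\hat{u})=\mathfrak{p}]$, and letting $\varepsilon\downarrow 0$ closes the argument.

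The main obstacle I anticipate is the uniform pattern-stability step: producing a threshold $\delta_0$ that works simultaneously for all $u$ in the ball $B_M(0)$. This is ultimately a finite strict-inequality argument, but it relies on the assumption in~(\ref{general linear framework}) that the vertices $v_i$ are distinct and non-redundant. The smooth part $g$ never enters the analysis, because the pattern $I(x)$ is defined purely through the polyhedral-gauge piece of $f$.
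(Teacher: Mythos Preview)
Your proof is correct and takes a genuinely different, more direct route than the paper's. Both arguments share two ingredients: the pattern-stability fact that $I(\beta^0+u/\sqrt{n})=I_{\beta^0}(u)$ for bounded $u$ and large $n$ (which the paper proves in Appendix~\ref{appendix limiting pattern}), and tightness of $(\hat{u}_n)$ to control the unbounded region. The difference is in how Theorem~\ref{main theorem pattern convergence of convex sets} is invoked. The paper first extracts the weaker conclusion $I(\hat{u}_n)\overset{d}{\to}I(\hat{u})$, then uses the Skorokhod representation theorem to couple these discrete random patterns almost surely, chooses representative vectors $u_{\mathfrak{p}}$ for each pattern, and finally transfers the coupling through the map $I_{\beta^0}$. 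You instead recognize that $A_{\mathfrak{p}}=I_{\beta^0}^{-1}(\mathfrak{p})$ is itself a convex set and apply Theorem~\ref{main theorem pattern convergence of convex sets} directly to it, which sidesteps Skorokhod entirely. Your route is more elementary and exploits the full strength of Theorem~\ref{main theorem pattern convergence of convex sets} (convergence on \emph{all} convex sets, not just the pattern cells of $I$). Two minor remarks: your bound of $2\varepsilon$ can be sharpened to $\varepsilon$ since the symmetric difference $B_{n,\mathfrak{p}}\triangle A_{\mathfrak{p}}$ lies entirely in $B_M(0)^c$; and the pattern-stability threshold $\delta_0$ only requires finiteness of $\mathcal{S}$, not the non-redundancy of the vertices $v_i$, so your closing caveat is unnecessary.
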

\begin{proof}
Since $I(\hat{u}_n)\overset{d}{\rightarrow}I(\hat{u})$, by the Skorokhod representation theorem there exists a sequence of random patterns $I_n'\overset{d}{=}I(\hat{u}_n)$ and $I'\overset{d}{=}I(\hat{u})$ on a common probability space such that $I'_n\rightarrow I'$ almost surely. This means that eventually $I'_n = I'$ almost surely for all $n\geq N_1$ for some $N_1\in\mathbb{N}$, because the set of patterns is finite. Now, for each pattern $\mathfrak{p}\in\mathfrak{P}$ we can pick a representative vector of that pattern, $u_{\mathfrak{p}}\in I^{-1}(\mathfrak{p})$, and define $\tilde{u}_n:=u_{\mathfrak{p}}\iff I_n'=\mathfrak{p}$, and $\tilde{u}:=u_{\mathfrak{p}}\iff I'=\mathfrak{p}$. Consequently, $I(\tilde{u}_n)=I_n'\overset{d}{=}I(\hat{u}_n)$, $I(\tilde{u})=I'\overset{d}{=}I(\hat{u})$, and $\tilde{u}_n=\tilde{u}$ almost surely for all $n\geq N_1$. 

Let $\varepsilon>0$ be fixed, then by the tightness of $\hat{u}_n$ there is $M>0$ s.t. $\mathbb{P}[\Vert\hat{u}_n\Vert>M]<\varepsilon$ $\forall n$. Also, there is $N_2\in\mathbb{N}$ s.t. for all $n\geq N_2$, and $\Vert u\Vert\leq M$; $I(\beta^0+u/\sqrt{n})=I_{\beta^0}(u)$, by the definition of the limiting pattern $I_{\beta^0}(u)$. Therefore, if $\Vert\hat{u}_n\Vert\leq M$ and $n\geq \max\{N_1, N_2\}$, then
\begin{equation*}
I(\hat{\beta}_n)=I(\beta^0+\hat{u}_n/\sqrt{n})=I_{\beta^0}(\hat{u}_n) \overset{d}{=} I_{\beta^0}(\tilde{u}_n) \overset{a.s.}{=} I_{\beta^0}(\tilde{u})\overset{d}{=}I_{\beta^0}(\hat{u}).
\end{equation*}
Therefore, for any $\mathfrak{p}\in\mathfrak{P}$;
\begin{align*}
\limsup_{n\rightarrow\infty}\mathbb{P}[I(\hat{\beta}_n)=\mathfrak{p}]&\leq\limsup_{n\rightarrow\infty}\mathbb{P}[I(\hat{\beta}_n)=\mathfrak{p}, \Vert\hat{u}_n\Vert\leq M]+\varepsilon\leq \mathbb{P}[I_{\beta^0}(\hat{u})=\mathfrak{p}]+\varepsilon\\
\liminf_{n\rightarrow\infty}\mathbb{P}[I(\hat{\beta}_n)=\mathfrak{p}]&\geq\liminf_{n\rightarrow\infty}\mathbb{P}[I(\hat{\beta}_n)=\mathfrak{p}, \Vert\hat{u}_n\Vert\leq M]\geq \mathbb{P}[I_{\beta^0}(\hat{u})=\mathfrak{p}]-\varepsilon,
\end{align*}
which proves the claim.
\end{proof}

\subsection{Pattern recovery}
Conditions for pattern recovery for partly smooth regularizers were explored in \cite{vaiter2017model}. For SLOPE, an exact formula for the probability of asymptotic pattern recovery was established in Theorem 4.2 i)\cite{bogdan2022pattern}. Here, we harness the asymptotic formula (\ref{V(u)}) to provide a general proof for the asymptotic pattern recovery.

For a set $M\subset\mathbb{R}^p$, denote the parallel space $par(M)=span\{u-v: u,v\in M\}$. Assume that $f$ is a penalty satisfying (\ref{general linear framework}) and let $x\in\mathbb{R}^p,\mathfrak{p}_x\in\mathfrak{P}$ such that $I(x)=\mathfrak{p}_x$.  We define the \textit{pattern space of $f$ at $x$},  as the vector space
\begin{align*}
    \langle U_x\rangle:= span\{I^{-1}(\mathfrak{p}_x)\}.
\end{align*}

The following are equivalent representations of the pattern space $\langle U_x\rangle$:
    \begin{align}
        i) &\hspace{0.3cm}span\{I^{-1}(\mathfrak{p}_x)\},\nonumber\\
        ii) &\hspace{0.3cm}par(\partial f(x))^{\perp},\nonumber\\
        iii) &\hspace{0.3cm}\{u\in\mathbb{R}^p: I_x(u)=I(x)\},\label{pattern space representation}
    \end{align}
see Appendix \ref{Pattern space Appendix} or \cite{schneider2022geometry}. Writing the basis of the pattern space in a matrix $U_x$, we have $\langle U_x\rangle=Im(U_x)$. 
Importantly, the pattern space does not depend on the smooth part $g(x)$ in (\ref{general linear framework}). Moreover, for any $v_0\in\partial f(x)$, we have 
\begin{equation*}
    \partial f(x) - v_0  \subset \langle U_x\rangle^{\perp},
\end{equation*} 
and for any positive definite matrix $C$;
\begin{equation}\label{general orthogonality in subdifferentials}
     C^{-1/2}(\partial f(x)-v_0) \subset (C^{1/2}\langle U_{x} \rangle)^{\perp}.
\end{equation}
Additionally, if $g(x)=0$ in (\ref{general linear framework}), then
\begin{equation}\label{subdifferential as a face of the dual ball}
    \partial f(x)= \partial f(0) \cap (v_0+\langle U_x\rangle^{\perp}),
\end{equation}
for any $v_0\in\partial f(x)$, see Appendix \ref{Pattern space Appendix}.

In the rest of the article, we shall often make the following assumption and refer to it as assumption (\hyperref[assumption A]{A}):

\begin{assumptionA}\label{assumption A}
We shall say that a sequence of estimators $\hat{\beta}_n$ satisfies assumption (\hyperref[assumption A]{A}) if $\sqrt{n}(\hat{\beta}_n - \beta^0)$ converges weakly and weakly in pattern to the minimizer $\hat{u}$ of
\[
V(u) = \frac{1}{2}u^T C u - u^T W + f'(\beta^0; u),
\]
where \( W \) is some centered random vector, \( C \) is some positive definite matrix, and \( f \) is a convex penalty satisfying (\ref{general linear framework}).
\end{assumptionA}

\begin{theorem} \label{general pattern recovery}
Under the assumption (\hyperref[assumption A]{A}), probability of pattern recovery converges:
\begin{align*}
&\mathbb{P}\big[I(\hat{\beta}_n)=I(\beta^0)\big]\underset{n\rightarrow\infty}{\longrightarrow}  \mathbb{P} \big[\hat{u} \in \langle U_{\beta^0} \rangle\big] = \mathbb{P} \big[\zeta\in \partial f(\beta^0) \big],\\
    &\zeta = \mu + C^{1/2}(I-P)C^{-1/2}W,
\end{align*}
where $P$ is the projection onto $C^{1/2}\langle U_{\beta^0}\rangle$, $\mu=C^{1/2} P C^{-1/2}v_0$, and $v_0$ is any vector in $\partial f(\beta^0)$. In particular, if $W\sim\mathcal{N}(0,\sigma^2C)$, then
$\zeta\sim\mathcal{N}(\mu, \sigma^2 C^{1/2}(I-P)C^{1/2})$.

\end{theorem}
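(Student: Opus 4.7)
The plan is to reduce the finite-$n$ probability to a condition on the limit $\hat{u}$ using Corollary~\ref{asymptotic sampling corollary general}, then to translate the geometric event $\{\hat{u}\in\langle U_{\beta^0}\rangle\}$ into the dual event $\{\zeta\in\partial f(\beta^0)\}$ via the optimality conditions. For the first reduction, Corollary~\ref{asymptotic sampling corollary general} gives $\mathbb{P}[I(\hat{\beta}_n)=I(\beta^0)]\to\mathbb{P}[I_{\beta^0}(\hat{u})=I(\beta^0)]$, and equivalence (iii) of (\ref{pattern space representation}) identifies the set $\{u:I_{\beta^0}(u)=I(\beta^0)\}$ with the pattern space $\langle U_{\beta^0}\rangle$. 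This immediately yields the first identity $\mathbb{P}[\hat{u}\in\langle U_{\beta^0}\rangle]$.

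For the second identity, the key observation is that whenever $u\in\langle U_{\beta^0}\rangle$ we have $I_{\beta^0}(u)=I(\beta^0)$, so by (\ref{directional derivative equation}) the subdifferential collapses: $\partial_u f'(\beta^0;u)=\partial f(\beta^0)$. Combined with the optimality condition $W-C\hat{u}\in\partial_u f'(\beta^0;\hat{u})$ from (\ref{main optimality condition}) and the strict convexity of $V$ (hence uniqueness of its minimizer, using positive-definiteness of $C$), this gives the equivalence of events
\[
\{\hat{u}\in\langle U_{\beta^0}\rangle\}\;=\;\{\exists\,u\in\langle U_{\beta^0}\rangle:\;W-Cu\in\partial f(\beta^0)\}.
\]
Next, fix any $v_0\in\partial f(\beta^0)$. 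By (\ref{subdifferential as a face of the dual ball}), $\partial f(\beta^0)\subset v_0+\langle U_{\beta^0}\rangle^\perp$, so any candidate $u\in\langle U_{\beta^0}\rangle$ satisfying $W-Cu\in\partial f(\beta^0)$ must fulfil the linear equation $U_{\beta^0}^T(W-v_0-Cu)=0$. Since $C$ is positive definite, $U_{\beta^0}^T C U_{\beta^0}$ is invertible and $u$ is pinned down uniquely as $u^{\star}=U_{\beta^0}(U_{\beta^0}^T C U_{\beta^0})^{-1}U_{\beta^0}^T(W-v_0)$. Substituting back and identifying the orthogonal projection onto $C^{1/2}\langle U_{\beta^0}\rangle$ as $P=C^{1/2}U_{\beta^0}(U_{\beta^0}^T C U_{\beta^0})^{-1}U_{\beta^0}^T C^{1/2}$ yields $W-Cu^{\star}=\zeta$, so the existence above is equivalent to $\zeta\in\partial f(\beta^0)$.

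The Gaussian case is then a short moment computation: $\zeta$ is an affine image of $W$, and since $I-P$ is symmetric and idempotent, its covariance collapses to $\sigma^2 C^{1/2}(I-P)C^{1/2}$, while the mean is $\mu$. The main obstacle I expect is the linear-algebra bookkeeping in the second step: verifying that the candidate $u^{\star}$ (and hence the event $\{\zeta\in\partial f(\beta^0)\}$) is independent of the choice of $v_0\in\partial f(\beta^0)$, which follows because differences of elements of $\partial f(\beta^0)$ lie in $\langle U_{\beta^0}\rangle^\perp$ and are therefore annihilated by $U_{\beta^0}^T$; and recognising the explicit matrix $C^{1/2}(I-P)C^{-1/2}=I-CU_{\beta^0}(U_{\beta^0}^T C U_{\beta^0})^{-1}U_{\beta^0}^T$ as the residual operator in the above substitution.
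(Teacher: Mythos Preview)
Your proof is correct and follows essentially the same path as the paper's: Corollary~\ref{asymptotic sampling corollary general} together with (\ref{pattern space representation})(iii) for the first identity, and then the optimality condition (\ref{main optimality condition}) combined with the orthogonality $\partial f(\beta^0)-v_0\subset\langle U_{\beta^0}\rangle^\perp$ for the second. The only cosmetic difference is that the paper passes to $C^{-1/2}$-coordinates and reads off the decomposition $Y=PY+(I-P)Y$, whereas you solve the normal equations for $u^\star$ explicitly and then verify $W-Cu^\star=\zeta$; both routes arrive at the same $\zeta$. One minor citation fix: the inclusion $\partial f(\beta^0)\subset v_0+\langle U_{\beta^0}\rangle^\perp$ you need is the general fact stated just before (\ref{general orthogonality in subdifferentials}) (equivalently (\ref{pattern space representation})(ii)); the reference (\ref{subdifferential as a face of the dual ball}) is the stronger \emph{equality} and requires $g=0$.
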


\begin{remark}\label{remark after general pattern recovery}
Explicitly, $P=C^{1/2}U_{\beta^0}(U_{\beta^0}^TCU_{\beta^0})^{-1}U_{\beta^0}^TC^{1/2}$ and $\langle P\rangle=C^{1/2}\langle U_{\beta^0}\rangle$. Throughout, we use $\langle \cdot \rangle$ to denote the column space of a matrix, and $\langle \cdot \rangle^{\perp}$ its orthogonal complement.  One can verify that the affine space \footnote{For $A\subset\mathbb{R}^p$ the affine space is defined as $\text{aff}(A)=span\{A-x_0\}-x_0$, where $x_0$ is any fixed vector in $A$.} of $\partial f(\beta^0)$ is
\begin{equation*}
    \zeta\in\text{aff}(\partial f(\beta^0))= v_0 + \langle U_{\beta^0}\rangle^{\perp}.
\end{equation*}
Indeed, both $\mu-v_0=C^{1/2} P C^{-1/2}v_0 -v_0$ and $\zeta-\mu=C^{1/2}(I-P)C^{-1/2}W$ are in $C^{1/2}\langle I-P\rangle=\langle U_{\beta^0}\rangle^{\perp}.$ 
Moreover, 
\begin{equation}\label{irrepresentability equivalence}
\mu=\mathbb{E}[\zeta]=C^{1/2} P C^{-1/2}v_0 \in C\langle U_{\beta^0}\rangle\cap \text{aff}(\partial f(\beta^0)),
\end{equation}
 which does not depend on the choice of $v_0\in\partial f(\beta^0)$. Also, if $g(x)=0$ in (\ref{general linear framework}), then the limiting event in Theorem \ref{general pattern recovery} is $\{\zeta\in\partial f(0)\}$ by (\ref{subdifferential as a face of the dual ball}). 
\end{remark}

\begin{proof}
By Corollary \ref{asymptotic sampling corollary general} and (\ref{pattern space representation}), we obtain:
\begin{equation*}
    \mathbb{P}[I(\hat{\beta}_n)=I(\beta^0)] \longrightarrow \mathbb{P}[I_{\beta^0}(\hat{u})=I(\beta^0)]=\mathbb{P} \big[\hat{u} \in \langle U_{\beta^0} \rangle\big].
\end{equation*}
Moreover, $\forall u\in\langle U_{\beta^0} \rangle$; $\partial f'({\beta^0};u) = \partial f(I_{\beta^0}(u)) = \partial f(\beta^0) $, since $I_{\beta^0}(u)=I(\beta^0)$ by (\ref{pattern space representation}). Consequently, the optimality condition (\ref{main optimality condition}) $W \in Cu + \partial f'({\beta^0};u)$ yields:
\begin{align}
        \hat{u} \in \langle U_{\beta^0} \rangle &\iff W \in C\langle U_{\beta^0} \rangle + \partial f(\beta^0)\nonumber\\
        & \iff C^{-1/2}W \in C^{1/2}\langle U_{\beta^0} \rangle + C^{-1/2}\partial f(\beta^0)\nonumber\\
        &\iff \underbrace{-C^{-1/2}v_0 + C^{-1/2}W}_{=:Y} \in \underbrace{C^{1/2}\langle U_{\beta^0} \rangle}_{=\langle P \rangle} + \underbrace{C^{-1/2}(\partial f(\beta^0)-v_0)}_{\subset\langle I-P\rangle}.\label{general equation which needs orthogonal decomposition}
\end{align}
We have $C^{1/2}\langle U_{\beta^0} \rangle = \langle P \rangle$ and by (\ref{general orthogonality in subdifferentials}) $C^{-1/2}(\partial f(\beta^0)-v_0)\subset \langle P\rangle^{\perp} = \langle I-P\rangle$. 
Decomposing $Y = PY +(I-P)Y$, (\ref{general equation which needs orthogonal decomposition}) reduces to $(I-P)Y\in C^{-1/2}(\partial f(\beta^0)-v_0)$. Thus (\ref{general equation which needs orthogonal decomposition}) yields
\begin{align}
 \hat{u} \in \langle U_{\beta^0} \rangle&\iff (I-P)Y\in C^{-1/2}(\partial
f(\beta^0)-v_0)\nonumber\\
&\iff v_0 + C^{1/2} (I-P)(-C^{-1/2}v_0 + C^{-1/2}W)\in \partial
f(\beta^0)\nonumber\\
& \iff C^{1/2} P C^{-1/2}v_0 + C^{1/2}(I-P)C^{-1/2}W \in \partial
f(\beta^0),\label{zeta decomposition}
\end{align}
and using that $W\sim\mathcal{N}(0,\sigma^2 C)$, the above Gaussian vector has expectation $C^{1/2}P C^{-1/2}v_0$ and covariance matrix $\sigma^2 C^{1/2}(I-P)C^{1/2}$, which finishes the proof.
\end{proof}
Observe that Theorem \ref{general pattern recovery} is based on the equivalence
\begin{equation*}
    \hat{u} \in \langle U_{\beta^0} \rangle \iff W \in C\langle U_{\beta^0} \rangle + \partial f(\beta^0) \iff \zeta\in \partial f(\beta^0).
\end{equation*}
Moreover, Theorem \ref{general pattern recovery} reveals when it is possible to recover the true pattern with high probability as the penalization increases. Indeed, pattern recovery is possible if and only if $\mathbb{E}[\zeta]\in ri(\partial f(\beta^0))$, where $ri (\partial f(\beta^0))$ is the relative interior \footnote{For $A\subset\mathbb{R}^p$, $ri(A)$ is the interior of $A$ in \text{aff(A)}, where \text{aff(A)}$\subset \mathbb{R}^p$ is equipped with the subset topology. } of $\partial f(\beta^0)$ w.r.t. the affine space $\text{aff}(\partial f(\beta^0))= v_0 + \langle U_{\beta^0}\rangle^{\perp}$. We can view this as the asymptotic irrepresentability condition, which explicitly reads:
\begin{equation}\label{irrepresentability condition}
    C^{1/2}P C^{-1/2}v_0 \in ri (\partial f(\beta^0)),
\end{equation}
where $P$ is the projection onto $C^{1/2}\langle U_{\beta^0}\rangle$ and $v_0\in\partial f(\beta^0)$. Or equivalently, 
\begin{equation}\label{irrepresentability condition compact}
    0 \in (I-P)C^{-1/2}ri(\partial f(\beta^0)).
\end{equation}
Alternatively, by (\ref{irrepresentability equivalence}), the irrepresentability condition can be formulated equivalently as
\begin{equation}\label{irrepresentability condition geometric}
C\langle U_{\beta^0}\rangle\cap ri(\partial f(\beta^0))\neq\emptyset.
\end{equation}
Closely related versions of this condition for general penalties are explored in detail in \cite{vaiter2017model},\cite{graczyk2023pattern}, and for SLOPE in \cite{bogdan2022pattern}. For Lasso, (\ref{irrepresentability condition}) reduces to the Lasso irrepresentability condition \cite{zhao2006model}.

Consequently, if (\ref{irrepresentability condition}) holds, the probability of limiting pattern recovery converges to one as the penalty scaling increases. More precisely:
\begin{corollary}\label{pattern recovery exponential bound}
     Let $f_n=n^{1/2}\alpha f$, where $f$ is some fixed penalty function of the form (\ref{general linear framework}). Assume that the asymptotic irrepresentability condition (\ref{irrepresentability condition}) holds, and that the vector $W$ in (\hyperref[assumption A]{A}) has sub-gaussian entries. Then
    \begin{align*}\lim_{n\rightarrow\infty}\mathbb{P}[I(\hat{\beta}_n)=I(\beta^0)]
    \geq 1- 2e^{-c\alpha^2},
    \end{align*}
    for some positive constant $c$.
\end{corollary}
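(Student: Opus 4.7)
The plan is to reduce the claim to Theorem~\ref{general pattern recovery} and then apply a sub-gaussian concentration inequality. By Theorem~\ref{general pattern recovery} applied to the scaled penalty $\alpha f$ (whose subdifferential at $\beta^0$ is $\alpha\,\partial f(\beta^0)$ and whose associated mean is $\alpha\mu'$, with $\mu' := C^{1/2}PC^{-1/2}v_0'$ and $v_0'\in \partial f(\beta^0)$), the limiting probability of pattern recovery equals $\mathbb{P}[\zeta \in \alpha\,\partial f(\beta^0)]$, where
\[
\zeta \;=\; \alpha\mu' + Z, \qquad Z \;:=\; C^{1/2}(I-P)C^{-1/2}W.
\]

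Next I would exploit the affine geometry. Both $\alpha\mu'$ and the perturbation $Z$ lie in the linear space $\langle U_{\beta^0}\rangle^{\perp}$ parallel to $\text{aff}(\partial f(\beta^0))$, as noted in Remark~\ref{remark after general pattern recovery}, so $\zeta$ stays in the common affine space $\alpha v_0' + \langle U_{\beta^0}\rangle^{\perp} = \text{aff}(\alpha\,\partial f(\beta^0))$. The irrepresentability condition (\ref{irrepresentability condition}) says exactly that $\mu'$ lies in the relative interior $ri(\partial f(\beta^0))$, so there exists a constant $d>0$, depending only on $f$ and $\beta^0$, equal to the distance (within that affine space) from $\mu'$ to the relative boundary of $\partial f(\beta^0)$. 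Scaling yields that $\alpha\mu'$ has distance $\alpha d$ from the relative boundary of $\alpha\,\partial f(\beta^0)$. Therefore
\[
\{\,\|Z\|_2 \le \alpha d\,\} \;\subset\; \{\,\zeta \in \alpha\,\partial f(\beta^0)\,\},
\]
and it remains only to control $\mathbb{P}[\|Z\|_2 > \alpha d]$.

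For this I would invoke sub-gaussian concentration. Since $W$ has sub-gaussian entries and $Z$ is a fixed linear image of $W$, $Z$ is itself a sub-gaussian random vector in $\mathbb{R}^p$ with sub-gaussian parameter $K$ depending only on $\|W\|_{\psi_2}$, $\|C\|_{op}$, and $\|(I-P)\|_{op}\le 1$. A standard bound (for instance the norm tail inequality of Theorem~3.1.1/Theorem~6.3.2 in \cite{vershynin2018high}, or a coordinatewise union bound combined with scalar sub-gaussian tails, which is enough since $p$ is fixed) gives
\[
\mathbb{P}\bigl[\|Z\|_2 > \alpha d\bigr] \;\le\; 2\exp\!\bigl(-c\,\alpha^2\bigr)
\]
for some positive constant $c$ depending only on $d$, $p$, $K$, and $C$. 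Combining this with the inclusion above yields the stated bound.

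The only nontrivial point in the argument is the first one: verifying that the perturbation $\zeta - \alpha\mu'=Z$ is genuinely confined to the affine hull of $\partial f(\beta^0)$, so that the relevant notion of distance to the boundary is the relative-interior distance guaranteed by irrepresentability rather than the ambient one. Once this containment $Z\in\langle U_{\beta^0}\rangle^{\perp}$ is noted, the rest of the proof is a direct application of standard sub-gaussian tail bounds and needs no further structural input.
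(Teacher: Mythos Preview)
Your proof is correct and follows essentially the same approach as the paper: apply Theorem~\ref{general pattern recovery} to get the limiting probability $\mathbb{P}[\alpha\mu'+BW\in\alpha\,\partial f(\beta^0)]$ with $B=C^{1/2}(I-P)C^{-1/2}$, use irrepresentability to obtain a positive distance $d$ from $\mu'$ to the relative boundary, and bound $\mathbb{P}[\Vert BW\Vert>\alpha d]$ via sub-gaussian tails. Your explicit remark that $Z=BW\in\langle U_{\beta^0}\rangle^{\perp}$ (so the relevant distance is the relative one) is a point the paper leaves implicit, but otherwise the arguments coincide.
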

\begin{proof} 
By Theorem \ref{general pattern recovery}, $\mathbb{P}[I(\hat{\beta}_n)=I(\beta^0)]$ converges to $\mathbb{P}[\alpha\mu + BW \in \alpha \partial f(\beta^0)]$, where $B=C^{1/2}(I-P)C^{-1/2}$ and $\mu \in  ri(\partial f(\beta^0))$ by (\ref{irrepresentability condition}). Let $d>0$ denote the distance between $\mu$ and the boundary of $\partial f(\beta^0)$. Then
\begin{align*}
\lim_{n\rightarrow\infty}\mathbb{P}[I(\hat{\beta}_n)\neq I(\beta^0)]&=\mathbb{P}[BW\notin\alpha(\partial f(\beta^0)-\mu))] \\
&\leq \mathbb{P}\left[ \Vert BW\Vert> \alpha d \right]\\
&\leq 2e^{-c\alpha^2},
\end{align*}
for some $c>0$. 
\end{proof}
For more exact sub-gaussian tail bounds we refer to the Hanson--Wright concentration inequality in Theorem 6.2.1 or Theorem 6.3.2 \cite{vershynin2018high}.

\subsection{Two-step recovery}
Exact pattern recovery can be obtained for an arbitrary covariance structure $C$ employing the two-step proximal method (\ref{two-step procedure}) described in this section. This idea has already been used for SLOPE in \cite{graczyk2023pattern}. Here we develop new theory and prove model consistency of the second-order method for regularizers of the form (\ref{general linear framework}). 

Observe that for $C=\mathbb{I}$, the irrepresentability condition (\ref{irrepresentability condition}) will always be satisfied, provided that the pattern space $\langle U_{\beta^0}\rangle$ intersects the relative interior of $\partial f(\beta^0)$. This is satisfied for the Lasso, SLOPE, or the Concavified Fused Lasso (see Proposition \ref{concavification of Fused Lasso}) and these methods will recover their respective model patterns in the sense of Corollary \ref{pattern recovery exponential bound}, when $C=\mathbb{I}$. However, if $C\neq\mathbb{I}$, the aforementioned first-order methods will fail to recover their pattern with high probability if $C\langle U_{\beta^0}\rangle\cap ri(\partial f(\beta^0))\neq\emptyset$. The problem of strong covariates can be addressed by higher-order methods.
    
    For a convex penalty $f:\mathbb{R}^p\rightarrow \mathbb{R}$, the proximal operator is defined as the map from $\mathbb{R}^p$ to $\mathbb{R}$ given by \begin{equation*}
\text{Prox}_{f}(\beta):=\underset{\xi\in\mathbb{R}^p}{\operatorname{argmin}}\hspace{0.2cm}\frac{1}{2}\Vert \beta-\xi\Vert_2^2+f(\xi).
    \end{equation*}
The two-step procedure consists of:
    \begin{align}\label{two-step procedure}
        \text{Step 1: }& \text{Obtaining an initial estimate } \hat{\beta}^{(1)} \text{ of } \beta^0. \nonumber \\
        \text{Step 2: }& \text{Obtaining a truncated estimate }\hat{\beta}^{(2)}=\text{Prox}_{f}(\hat{\beta}^{(1)}). 
    \end{align}
The truncated estimate $\hat{\beta}^{(2)}$ is designed to recover the $f-$ pattern of the signal $\beta^0$, see Theorem~\ref{two-step proximal method theorem}. It can be heavily biased and therefore does not produce an accurate estimate of the signal in terms of MSE. The estimate of the pattern $M=I(\hat{\beta})$ after Step 2. can be used to obtain an asymptotically unbiased estimate $\hat{\beta}^{(3)}=\hat{\beta}_{OLS(M)}$ of $\beta^0$:
\begin{align}\label{three-step procedure}
    \text{Step 3: } \hat{\beta}^{(3)}=U_M(X_M^TX_M)^{-1}X_M^Ty,
\end{align}
where $X_M=X U_M$, and $U_M=(b_1,\dots,b_{\vert M\vert})$ is any fixed basis \footnote{The estimate $\hat{\beta}_{OLS(M)}$ does not depend on the choice of basis $U_M$ of $\langle U_M\rangle_f$. Indeed, for any other basis $\tilde{U}_M=(\tilde{b}_1,\dots,\tilde{b}_{\vert M\vert})$, there is an invertible matrix $Q\in\mathbb{R}^{\vert M\vert \times \vert M\vert}$, such that $\tilde{U}_M=U_M Q$. For $\tilde{X}_M=X\tilde{U}_M=X_M Q$, we have $\tilde{U}_M(\tilde{X}_M^T\tilde{X}_M)^{-1}\tilde{X}_M^T=U_M(X_M^TX_M)^{-1}X_M^T$.} of the pattern space $\langle U_M\rangle_f$. If the true pattern is recovered after Step 2, i.e. $M=I(\hat{\beta})=I(\beta_0)$, then $\hat{\beta}^{(3)}=\hat{\beta}_{OLS(M)}$ is unbiased \footnote{If $M=I(\beta^0)$, then $\beta^0=U_M \beta_M$ for some $\beta_M\in\mathbb{R}^{\vert M\vert}$. The linear model then reduces to $y=X_M \beta_M + \varepsilon$ and $\mathbb{E}[\hat{\beta}_{OLS(M)}]=U_M\beta_M=\beta^0$ by (\ref{three-step procedure}).} for $\beta^0$. The true pattern can be recovered even in the high-dimensional regime when $p>n$, but in particular the correct pattern is recovered after Step 2 with high probability for fixed $p$ and $n\rightarrow\infty$, see Theorem \ref{two-step proximal method theorem}. Consequently, the 3 Step procedure is asymptotically unbiased (with high probability). The third step is possible if the reduced design matrix $X_M$ has full rank, for which $\vert M\vert\leq p$ is necessary.
\begin{lemma}\label{two-step proximal method lemma}
Let $f$ be a convex penalty of the form (\ref{general linear framework}) and $\hat{\beta}^{(1)}_n$ a sequence of estimators such that $\sqrt{n}(\hat{\beta}^{(1)}_n-\beta^0)\overset{d}{\longrightarrow} W$ for some random vector $W$. Let $\hat{\beta}^{(2)}_n=\text{Prox}_{n^{-1/2}f}(\hat{\beta}^{(1)}_n)$, i.e. the minimizer of 
\begin{equation*}
    M_n(\beta):= \dfrac{1}{2}\Vert \hat{\beta}^{(1)}_n-\beta\Vert_2^2 + n^{-1/2}f(\beta).
\end{equation*}
 Then $\sqrt{n}(\hat{\beta}^{(2)}_n-\beta^0)$ converges weakly and weakly in pattern to the minimizer $\hat{u}$ of: 
\begin{equation*}
    V(u)=\dfrac{1}{2}\Vert u\Vert_2^2 - u^T W + f'(\beta^0;u).
\end{equation*}
\end{lemma}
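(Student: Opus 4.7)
The plan is to rescale so that $\hat{\beta}^{(2)}_n$ becomes the minimizer of a convex objective in $u = \sqrt{n}(\beta - \beta^0)$ whose structure fits exactly the hypotheses of Theorem~\ref{sqrt-asymptotic} and Theorem~\ref{main theorem pattern convergence of convex sets} specialised to $C_n = C = I$, with the role of the ``noise'' played by the rescaled initial error $W_n := \sqrt{n}(\hat{\beta}^{(1)}_n - \beta^0)$, which converges in distribution to $W$ by hypothesis.

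First I would substitute $\beta = \beta^0 + u/\sqrt{n}$ in $M_n$, multiply by $n$, and discard the additive terms $\tfrac{1}{2}\|W_n\|_2^2 + \sqrt{n}\,f(\beta^0)$ not depending on $u$, producing the convex objective
\[
V_n(u) = \tfrac{1}{2}\|u\|_2^2 - u^T W_n + \sqrt{n}\bigl[f(\beta^0 + u/\sqrt{n}) - f(\beta^0)\bigr],
\]
minimized exactly at $\hat{u}_n := \sqrt{n}(\hat{\beta}^{(2)}_n - \beta^0)$. Pointwise in $u$, Slutsky together with $W_n \overset{d}{\longrightarrow} W$ and the defining limit $\sqrt{n}[f(\beta^0 + u/\sqrt{n}) - f(\beta^0)] \to f'(\beta^0;u)$ (the directional derivative exists by convexity of $f$) give $V_n(u) \overset{d}{\longrightarrow} V(u)$. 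The convexity argument already used in Theorem~\ref{sqrt-asymptotic}, combined with the uniqueness of $\hat{u} = \operatorname{argmin} V$, then promotes this to weak convergence of minimizers $\hat{u}_n \overset{d}{\longrightarrow} \hat{u}$.

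For the in-pattern convergence I would write the optimality conditions
\[
W_n \in \hat{u}_n + \partial f\bigl(\beta^0 + \hat{u}_n/\sqrt{n}\bigr), \qquad W \in \hat{u} + \partial_u f'(\beta^0;\hat{u}),
\]
which are precisely (\ref{main optimality condition}) with $C_n = C = I$. Therefore the pattern-partition argument in the proof of Theorem~\ref{main theorem pattern convergence of convex sets} applies verbatim: for any convex set $\mathcal{K} \subset \mathbb{R}^p$, use tightness of $\hat{u}_n$ to restrict to $\mathcal{K} \cap \overline{B_M(0)}$, decompose into the finitely many bounded convex pieces $\mathcal{K}^{\mathfrak{p}} = \mathcal{K} \cap I^{-1}(\mathfrak{p}) \cap \overline{B_M(0)}$, and on each piece combine the Hausdorff convergence $\partial f(\beta^0 + u/\sqrt{n}) \overset{d_H}{\longrightarrow} \partial_u f'(\beta^0;u)$ from Lemma~\ref{Hausdorff subdifferential lemma} with Lemma~\ref{Hausdorff lemma} to transfer the set-membership probability to the limit. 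Taking $\mathcal{K} = I^{-1}(\mathfrak{p})$ and summing over patterns delivers $\mathbb{P}[I(\hat{u}_n) = \mathfrak{p}] \to \mathbb{P}[I(\hat{u}) = \mathfrak{p}]$ for every pattern $\mathfrak{p} \in \mathfrak{P}$.

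The main obstacle is not conceptual but one of hypothesis-checking: Lemma~\ref{Hausdorff lemma} nominally requires the limit $W$ to possess a continuous bounded Lebesgue density, which is not explicitly assumed in the statement. This holds automatically in the intended three-step application where $\hat{\beta}^{(1)}$ is an OLS-type estimator and $W$ is Gaussian, but would need to be added as a mild regularity assumption to cover fully arbitrary initial estimators. Once that point is handled, every remaining ingredient has already been established in the earlier theorems, so the proof reduces to routine bookkeeping in the special case $C = I$.
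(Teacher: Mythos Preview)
Your proof is correct and follows essentially the same approach as the paper: rescale $M_n$ to obtain the convex objective $V_n(u)=\tfrac{1}{2}\|u\|_2^2 - u^T W_n + \sqrt{n}[f(\beta^0+u/\sqrt{n})-f(\beta^0)]$, invoke the convexity argument from Theorem~\ref{sqrt-asymptotic} for weak convergence, and then appeal to the proof of Theorem~\ref{main theorem pattern convergence of convex sets} for pattern convergence. Your flag about Lemma~\ref{Hausdorff lemma} requiring a continuous bounded density on $W$ is perceptive --- the paper's own proof glosses over this point by simply writing ``weak pattern convergence follows as in Theorem~\ref{main theorem pattern convergence of convex sets}'', so the gap you identify is present in the original as well and is handled in practice by the Gaussian limits arising in the intended applications.
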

\begin{proof} The error $\hat{u}_n=\sqrt{n}(\hat{\beta}^{(2)}_n-\beta^0)$ minimizes
    \begin{align*}
    V_n(u)&=n(M_n(\beta^0+u/\sqrt{n})-M_n(\beta^0))\\
    &= n\left(\dfrac{1}{2}\Vert( \hat{\beta}^{(1)}_n-\beta^0)-u/\sqrt{n}\Vert_2^2 -\dfrac{1}{2}\Vert \hat{\beta}^{(1)}_n-\beta^0\Vert_2^2\right) + \sqrt{n}(f(\beta^0+u/\sqrt{n})-f(\beta^0))\\
    &=\dfrac{1}{2}\Vert u\Vert_2^2 - u^T\sqrt{n}(\hat{\beta}^{(1)}_n-\beta^0) + \sqrt{n}(f(\beta^0+u/\sqrt{n})-f(\beta^0))\\
    &\overset{d}{\longrightarrow} \dfrac{1}{2}\Vert u\Vert_2^2 - u^T W +f'(\beta^0;u).
    \end{align*}
Consequently, convergence in distribution follows by the convexity of the objectives as in Theorem \ref{sqrt-asymptotic}. Weak pattern convergence follows as in Theorem \ref{main theorem pattern convergence of convex sets}.
\end{proof}
\begin{example}
For $\hat{\beta}^{(1)}_n$ equal to the OLS estimator, $\sqrt{n}(\hat{\beta}^{(1)}_n-\beta^0)\overset{d}{\longrightarrow}\mathcal{N}(0, \sigma^2 C^{-1})$. For the Ridge estimator with penalty sequence $\alpha_n/\sqrt{n}\rightarrow\alpha\geq 0$, $\sqrt{n}(\hat{\beta}^{(1)}_n-\beta^0)\overset{d}{\longrightarrow}W\sim\mathcal{N}(-\alpha C^{-1}\beta^0,\sigma^2C^{-1})$.
\end{example}

\begin{theorem}\label{two-step proximal method theorem}
Let $f$ be of the form (\ref{general linear framework}) and $\hat{\beta}^{(1)}_n$ such that $\sqrt{n}(\hat{\beta}^{(1)}_n-\beta^0)\overset{d}{\longrightarrow} W$ for some random vector $W$ with sub-gaussian entries. Then for $\hat{\beta}^{(2)}_n=\text{Prox}_{n^{-1/2}\alpha f}(\hat{\beta}^{(1)}_n)$, for all $\alpha\geq0$,
\begin{align*}
\lim_{n\rightarrow\infty}\mathbb{P}\big[I(\hat{\beta}^{(2)}_n)=I(\beta^0)\big]\geq 1- 2e^{-c\alpha^2},
\end{align*}
for some $c>0$, provided the pattern space $\langle U_{\beta^0}\rangle$ intersects the relative interior of $\partial f(\beta^0)$.

\end{theorem}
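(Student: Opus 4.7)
The plan is to combine Lemma~\ref{two-step proximal method lemma} with the irrepresentability analysis behind Corollary~\ref{pattern recovery exponential bound}, exploiting the fact that the proximal step replaces the design covariance $C$ by the identity in the limiting objective, so correlations in the regressors cannot spoil the irrepresentability condition.

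First, I would apply Lemma~\ref{two-step proximal method lemma} with the penalty $\alpha f$ in place of $f$. Since $\alpha f$ still satisfies (\ref{general linear framework}) --- its vertices are $\alpha v_i$ and its smooth part is $\alpha g$ --- the lemma yields that $\sqrt{n}(\hat{\beta}^{(2)}_n - \beta^0)$ converges weakly and weakly in pattern to the minimizer $\hat{u}$ of
\begin{equation*}
V(u) = \tfrac{1}{2}\Vert u\Vert_2^2 - u^T W + \alpha f'(\beta^0; u),
\end{equation*}
which is precisely assumption (\hyperref[assumption A]{A}) with $C = \mathbb{I}$ and penalty $\tilde{f} := \alpha f$.

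Next, I would verify the asymptotic irrepresentability condition (\ref{irrepresentability condition geometric}) in this setting. With $C = \mathbb{I}$ it becomes $\langle U_{\beta^0}\rangle \cap ri(\alpha\,\partial f(\beta^0)) \neq \emptyset$, which for any $\alpha > 0$ is equivalent to the stated hypothesis $\langle U_{\beta^0}\rangle \cap ri(\partial f(\beta^0)) \neq \emptyset$ (the case $\alpha = 0$ is vacuous). I would then pick $v_0 \in \langle U_{\beta^0}\rangle \cap ri(\partial f(\beta^0))$, so that the projection $P$ onto $\langle U_{\beta^0}\rangle$ fixes $v_0$. Applying Theorem~\ref{general pattern recovery} to the scaled penalty $\alpha f$, the limiting pattern-recovery probability equals $\mathbb{P}\bigl[\alpha v_0 + (I-P)W \in \alpha\,\partial f(\beta^0)\bigr] = \mathbb{P}\bigl[(I-P)W \in \alpha(\partial f(\beta^0) - v_0)\bigr]$.

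Finally, I would conclude along the lines of Corollary~\ref{pattern recovery exponential bound}. Since $v_0 \in ri(\partial f(\beta^0))$ and, by (\ref{pattern space representation}), $\text{aff}(\partial f(\beta^0)) - v_0 = \langle U_{\beta^0}\rangle^{\perp}$ contains the range of $(I-P)$, there exists a constant $d > 0$ (independent of $\alpha$) such that the ball of radius $\alpha d$ in $\langle U_{\beta^0}\rangle^{\perp}$ around $0$ lies inside $\alpha(\partial f(\beta^0) - v_0)$. Consequently the failure probability is bounded by $\mathbb{P}\bigl[\Vert(I-P)W\Vert > \alpha d\bigr]$, and sub-gaussian concentration (e.g.\ Theorem~6.3.2 of \cite{vershynin2018high}) delivers the desired $2e^{-c\alpha^2}$ tail. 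There is no real obstacle here: the proof is essentially bookkeeping once one observes that the proximal step automatically reduces the problem to the orthogonal-design case $C = \mathbb{I}$, for which the intersection hypothesis alone is enough.
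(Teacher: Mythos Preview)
Your proposal is correct and follows essentially the same route as the paper: invoke Lemma~\ref{two-step proximal method lemma} to reduce to assumption~(\hyperref[assumption A]{A}) with $C=\mathbb{I}$, apply Theorem~\ref{general pattern recovery} to express the limiting recovery probability, and then use the intersection hypothesis plus sub-gaussian tails as in Corollary~\ref{pattern recovery exponential bound}. The only cosmetic difference is that you choose $v_0$ directly in $\langle U_{\beta^0}\rangle\cap ri(\partial f(\beta^0))$ so that $Pv_0=v_0$, whereas the paper takes an arbitrary $v_0\in\partial f(\beta^0)$ and works with $\mu=P_{\beta^0}v_0$; these are equivalent once one notes, as in Remark~\ref{remark after general pattern recovery}, that $\mu$ is independent of the choice of $v_0$.
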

\begin{proof}
From Lemma~\ref{two-step proximal method lemma} and Theorem \ref{general pattern recovery} (with $C=\mathbb{I}$), we obtain
\begin{align*}
\lim_{n\rightarrow\infty}\mathbb{P}\big[I(\hat{\beta}^{(2)}_n)=I(\beta^0)\big]&=\mathbb{P}[\alpha \mu+(I-P_{\beta^0})W\in\alpha\partial f(\beta^0)],
\end{align*}
where $P_{\beta^0}$ is a projection onto the pattern space $\langle U_{\beta^0}\rangle$, $\mu=P_{\beta^0}v_0$, $v_0\in\partial f(\beta^0)$. By assumption, (\ref{irrepresentability condition}) holds for $C=\mathbb{I}$, hence $\mu \in ri(\partial f(\beta^0))$. The bound follows by sub-gaussianity of $W$ as in Corollary \ref{pattern recovery exponential bound}.
\end{proof}

Whether the two step-proximal method asymptotically recovers the corresponding pattern of $\beta^0$ w.h.p. as $\alpha$ increases does not depend on the covariance structure $C$, but only on the condition $\langle U_{\beta^0}\rangle\cap ri(\partial f(\beta^0))\neq\emptyset$ in Theorem~\ref{two-step proximal method theorem}. For Lasso and SLOPE the condition is satisfied for every signal vector $\beta^0$, hence the two-step proximal method based on these penalties asymptotically recovers the respective pattern of any $\beta^0$, w.h.p. as $\alpha$ increases. The recovery holds for any covariance structure $C$. Interestingly, for Fused Lasso, there are signals $\beta^0$, for which $\langle U_{\beta^0}\rangle\cap ri(\partial f(\beta^0))=\emptyset$ (see Figure \ref{fig: Irrepresentability for Fused Lasso} and Example \ref{Fused Lasso counterexample}). Patterns of such signal vectors will not be recovered by the two-step proximal method. In Proposition \ref{concavification of Fused Lasso}, we show how this problem can be solved by a small modification to the Fused Lasso penalty.

\subsection{Pattern attainability}
We can also characterize all patterns $\mathfrak{p}\in\mathfrak{P}$, for which $\mathbb{P}[I(\hat{u})=\mathfrak{p}]>0$, in terms of the pattern space $\langle U_{\mathfrak{p}}\rangle=span\{u: I(u)= \mathfrak{p}\}$. In the following Proposition we assume (\hyperref[assumption A]{A}), and that $W$ has a density with respect to the Lebesgue measure.
\begin{proposition}\label{attainability proposition}
    For a pattern $\mathfrak{p}\in\mathfrak{P}$, $\mathbb{P}[I(\hat{u})=\mathfrak{p}]>0$ if and only if the pattern spaces of $\mathfrak{p}$ and $\mathfrak{q}=I_{\beta^0}(\mathfrak{p})$ coincide, i.e. $\langle U_{\mathfrak{p}}\rangle=\langle U_{\mathfrak{q}}\rangle$. This is equivalent to $dim(\partial f(\mathfrak{q}))=dim(\partial f(\mathfrak{p}))$.
\end{proposition}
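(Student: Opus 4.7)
The plan is to mirror the argument of Theorem~\ref{general pattern recovery}: use the first-order optimality condition for $\hat{u}$ to translate the event $\{I(\hat{u})=\mathfrak{p}\}$ into a condition on $W$, and then perform a dimension count to determine when this event carries positive Lebesgue measure on $\mathbb{R}^p$.

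First I would invoke the optimality condition $W \in C\hat{u} + \partial_u f'(\beta^0;\hat{u})$ from (\ref{main optimality condition}), using that $V$ is strictly convex (by positive definiteness of $C$) so $\hat{u}$ is unique. For any $u \in I^{-1}(\mathfrak{p})$, equation (\ref{directional derivative equation}) gives $\partial_u f'(\beta^0;u) = con\{v_i:i\in I_{\beta^0}(u)\} + \nabla g(\beta^0)$, which I will denote $\partial f(\mathfrak{q})$ in the shorthand of the proposition with $\mathfrak{q}=I_{\beta^0}(u)$. In the generic case $I(\beta^0)\cap\mathfrak{p}\neq\emptyset$ one has $\mathfrak{q}=I(\beta^0)\cap\mathfrak{p}$ independent of the representative $u$; otherwise I would partition $I^{-1}(\mathfrak{p})$ by the finitely many values of $I_{\beta^0}$ and treat each piece separately. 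This yields
\[
\{I(\hat{u})=\mathfrak{p}\} \;=\; \{W \in C\cdot I^{-1}(\mathfrak{p}) + \partial f(\mathfrak{q})\},
\]
and since $W$ has a density, the event has positive probability if and only if the Minkowski sum on the right has positive Lebesgue measure in $\mathbb{R}^p$.

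For the dimension count, representation $i)$ in (\ref{pattern space representation}) shows that $I^{-1}(\mathfrak{p})$ is a relatively open convex cone of full dimension $d_{\mathfrak{p}}:=\dim\langle U_{\mathfrak{p}}\rangle$ inside $\langle U_{\mathfrak{p}}\rangle$, while $ii)$ gives $\dim\partial f(\mathfrak{q})=p-d_{\mathfrak{q}}$ inside a translate of $\langle U_{\mathfrak{q}}\rangle^{\perp}$. Hence the Minkowski sum lies in a translate of $C\langle U_{\mathfrak{p}}\rangle + \langle U_{\mathfrak{q}}\rangle^{\perp}$, and having positive Lebesgue measure is equivalent to $C\langle U_{\mathfrak{p}}\rangle + \langle U_{\mathfrak{q}}\rangle^{\perp} = \mathbb{R}^p$. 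The relation $\mathfrak{q}\subseteq\mathfrak{p}$ yields $\partial f(\mathfrak{q})\subseteq\partial f(\mathfrak{p})$, hence $\langle U_{\mathfrak{p}}\rangle\subseteq\langle U_{\mathfrak{q}}\rangle$ via $ii)$, so in particular $d_{\mathfrak{p}}\leq d_{\mathfrak{q}}$. Positive definiteness of $C$ ensures $CV\cap V^{\perp}=\{0\}$ for every subspace $V$ (if $Cv\in V^{\perp}$ then $\langle Cv,v\rangle=0$ forces $v=0$), so $CV+V^{\perp}=\mathbb{R}^p$. Combining these observations, the sum $C\langle U_{\mathfrak{p}}\rangle + \langle U_{\mathfrak{q}}\rangle^{\perp}$ equals $\mathbb{R}^p$ exactly when $\langle U_{\mathfrak{p}}\rangle = \langle U_{\mathfrak{q}}\rangle$, which proves the main equivalence. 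The second equivalence, with $\dim\partial f(\mathfrak{p})=\dim\partial f(\mathfrak{q})$, follows immediately from the inclusion $\langle U_{\mathfrak{p}}\rangle\subseteq\langle U_{\mathfrak{q}}\rangle$ together with the identity $\dim\partial f(\mathfrak{p})=p-d_{\mathfrak{p}}$ given by $ii)$.

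The main obstacle I anticipate is the bookkeeping around $\mathfrak{q}=I_{\beta^0}(\mathfrak{p})$: ensuring either that it is well-defined as a single element of $\mathfrak{P}$ or, failing that, handling the partition of $I^{-1}(\mathfrak{p})$ into sub-cones on which $I_{\beta^0}$ is constant. A minor related point is checking that the Minkowski sum of the relatively open cone $C\cdot I^{-1}(\mathfrak{p})$ with the full-dimensional polytope $\partial f(\mathfrak{q})$ actually contains a relatively open subset of its affine hull --- this follows from the fact that Minkowski addition preserves relative interiors of convex sets with matching affine hulls, and is otherwise routine.
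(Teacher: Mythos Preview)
Your proof is correct and follows essentially the same route as the paper: translate $\{I(\hat u)=\mathfrak p\}$ via the optimality condition into $\{W\in C\cdot I^{-1}(\mathfrak p)+\partial f(\mathfrak q)\}$, then do a dimension count using $\langle U_{\mathfrak p}\rangle\subseteq\langle U_{\mathfrak q}\rangle$. The only cosmetic difference is that the paper first applies $C^{-1/2}$ to make the two summands orthogonal (via (\ref{general orthogonality in subdifferentials})) and then adds dimensions, whereas you keep $C$ in place and argue directly that $CV+V^{\perp}=\mathbb R^p$ from $CV\cap V^{\perp}=\{0\}$; these are equivalent, and your extra care about the well-definedness of $\mathfrak q=I_{\beta^0}(\mathfrak p)$ is a point the paper leaves implicit.
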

In particular, there is always a positive probability of recovering the true pattern, i.e. $\mathbb{P}[I(\hat{u})=I(\beta^0)]>0$, because for $\mathfrak{p}=I(\beta^0)$, also $\mathfrak{q}=I_{\beta^0}(\mathfrak{p})=I(\beta^0)$. 

\section{Examples}
We discuss several examples that fit into the framework (\ref{general linear framework}) and for which the theoretical results from the previous section can be applied.
\subsection{Generalized Lasso}
\begin{example}
Lasso penalty can be written in the form (\ref{general linear framework}) as $f(x)=\max\{\langle S \boldsymbol{\lambda},x\rangle: S\in\mathcal{S}\}$, where $\boldsymbol{\lambda}=(\lambda,\dots,\lambda)^T\in\mathbb{R}^p$ with $\lambda>0$ 
and $\mathcal{S}$ consists of $2^p$ diagonal matrices with entries $+1$ or $-1$. The Lasso pattern $I(x)\subset\mathcal{S}$ can be identified with the sign $I(x)\cong sgn(x)$ in the sense that $I(x)=I(y) \iff sgn(x)=sgn(y)$. There are $3^p$ distinct patterns in $\mathfrak{P}$. The subdifferential $\partial f(x)=con\{S\boldsymbol{\lambda}: S\in I(x)=\operatorname{argmax}_{S\in\mathcal{S}}\langle S\boldsymbol{\lambda}, x\rangle\}$ can be written as the Cartesian product of singletons $sgn(x_i)\lambda$ for $x_i\neq0$ and closed intervals $[-\lambda, \lambda]$ for $x_i=0$. 
\end{example}
\begin{example} 
Generalized Lasso reads $f_A(x)=\lambda\Vert Ax\Vert_1=\max\{\langle A^TS\boldsymbol{\lambda}, x \rangle : S\in\mathcal{S}\}$, where $A$ is an $m\times p$ matrix. The pattern can be identified with $I_A(x)\cong sgn(Ax)$, and the subdifferential is $\partial f_A(x)=A^T\partial f(Ax)=A^Tcon\{ S\boldsymbol{\lambda}: S\in \operatorname{argmax}_{S\in\mathcal{S}}\langle S\boldsymbol{\lambda}, Ax\rangle\}$, where $f$ is the standard Lasso penalty from the previous example. This follows from (\ref{composition of patterns}) with $\psi(x)=Ax$, $f(x)=\lambda\Vert x\Vert_1$, $f_A = f\circ\psi$. 
\end{example}

\begin{example}\label{Fused Lasso counterexample}(Fused Lasso)
Here we illustrate how the Fused Lasso fails to asymptotically recover its own patterns, even when $C=I$. Let $f_A(\beta)=\lambda\Vert A\beta\Vert_1$, $\lambda>0$. Let $\beta^0=(1,2,2,3)^T$, and for $a_1, a_2, a_3>0$, consider 

\begin{align*}
    A=\left[\begin{matrix}
a_1 & -a_1 & 0 & 0 \\
0 & a_2 & -a_2 & 0 \\
0 & 0 & a_3 & -a_3 \\
\end{matrix} \right] 
\hspace{1cm} 
\partial f_A(\beta^0)=
\lambda\begin{pmatrix} -&a_1 \\ &a_1 \\ -&a_3 \\ &a_3 \end{pmatrix}+
\lambda\left(\begin{array}{c}
0\\
con \left\{\left(\begin{matrix} a_2 \\ -a_2 \end{matrix}\right), \left(\begin{matrix} -a_2 \\ a_2 \end{matrix}\right)\right\}\\
0
\end{array}
\right),
\end{align*}
where the subdifferential is computed as $\partial f_A(\beta^0)= con\{A^T S \boldsymbol{\lambda}: S\in I_A(\beta^0)
\}$, where the pattern $I_A(\beta^0)=\operatorname{argmax}_{S\in\mathcal{S}}\langle  S \boldsymbol{\lambda},A\beta^0 \rangle\}=diag((-1,\{\pm1\},-1))$ consists of two diagonal matrices. The pattern space $\langle U_{\beta^0}\rangle$ is spanned by all vectors $\beta$ such that $I_A(\beta)=I_A(\beta^0)$. Explicitly, $\langle U_{\beta^0}\rangle = span\{\beta: sgn(A\beta)=sgn(A\beta^0)\}=span\{(1,0,0,0)^T, (0,1,1,0)^T, (0,0,0,1)^T\}$. For the case where $C=I$, (\ref{irrepresentability condition compact}) becomes $0\in (I-P)ri(\partial f_A(\beta^0))$, where $P=U_{\beta^0}(U_{\beta^0}^TU_{\beta^0})^{-1}U_{\beta^0}^T$ is the projection on $\langle U_{\beta^0} \rangle$. 

\begin{align*}
    P=\left[\begin{matrix}
1 & 0 & 0 & 0 \\
0 & 1/2 & 1/2 & 0 \\
0 & 1/2 & 1/2 & 0 \\
0 & 0 & 0 & 1
\end{matrix} \right] 
\hspace{1cm} 
(I-P)\partial f_A(\beta^0)=
\lambda\left(\begin{array}{c}
0\\
\begin{matrix} -(a_1+a_3)/2 \\ \;\;\;(a_1+a_3)/2 \end{matrix} + con \left\{\pm\left(\begin{matrix} a_2 \\ -a_2 \end{matrix}\right)\right\}\\
0
\end{array}
\right),
\end{align*}
We see that the irrepresentability condition is satisfied iff $a_1/2+a_3/2=\gamma a_2 + (1-\gamma) a_2$ for some $\gamma$ with $\vert\gamma\vert<1$, which is equivalent to $a_1/2+a_3/2<a_2$. For the standard tuning for Fused Lasso $a_1=a_2=a_3=1$, therefore $a_1/2+a_3/2=a_2$ and (\ref{irrepresentability condition compact}) does not hold. Consequently, the probability that the standard Fused Lasso (with $a_1=a_2=a_3$) recovers the pattern of $\beta^0=(1,2,2,3)$ is bounded by $1/2$ as $n\rightarrow\infty$, for any penalty $\lambda>0$. On the other hand, if the triple $(a_1, a_2, a_3)$ is strictly concave, the pattern of $\beta^0$ will be recovered with high probability. Surprisingly, a slight concavification of the clustering penalties rectifies the asymptotic recovery for all patterns. The following proposition asserts this result. For proof we refer to the Appendix \ref{apppendix proofs}.
\end{example}
\begin{proposition}[Concavification of Fused Lasso]\label{concavification of Fused Lasso}
    For $C=\mathbb{I}$, the (tuned) Fused Lasso $f_A(\beta)=\lambda\Vert A\beta\Vert_1 = \lambda\sum_{i=1}^{p-1} a_i\vert \beta_{i+1}-\beta_i\vert + \lambda\sum_{i=1}^p a\vert \beta_i\vert$, $a_i>0\;\forall i$, $a, \lambda>0$, asymptotically recovers all its patterns, i.e.;
    \begin{equation*}
            \forall \beta^0\in\mathbb{R}^{p}; \;\;\;\lim_{n\rightarrow\infty}\mathbb{P}[I_A({\hat{\beta}}_n)=I_A(\beta^0)]\underset{\lambda\rightarrow\infty}{\longrightarrow} 1, 
    \end{equation*}
    if and only if $(0, a_1, \dots, a_{p-1},0)$ forms a strictly concave sequence \footnote{This means there exists a strictly concave function $F:\mathbb{R}\rightarrow\mathbb{R}$, such that $a_i=F(i)$, for $i=0, 1\dots,p$.} and the sparsity penalty $a>\max\{a_i +a_{i+1} : 0\leq i\leq p-1\}$, where we set $a_0=a_p=0$.
\end{proposition}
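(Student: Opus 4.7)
The plan is to apply Corollary~\ref{pattern recovery exponential bound} with $C=\mathbb{I}$, which reduces the stated equivalence to checking whether the geometric irrepresentability condition~(\ref{irrepresentability condition geometric}), $\langle U_{\beta^0}\rangle \cap \operatorname{ri}(\partial f_A(\beta^0))\neq \emptyset$, holds at every $\beta^0\in\mathbb{R}^p$. Partition $\{1,\dots,p\}$ into maximal consecutive blocks $B_1,\dots,B_k$ on which $\beta^0$ takes the constant value $v_j$, write $\sigma_j=\operatorname{sgn}(v_{j+1}-v_j)$, and adopt the convention $a_0=a_p=0$. Then $\langle U_{\beta^0}\rangle=\operatorname{span}\{\mathbf{1}_{B_j}:v_j\neq 0\}$, and a generic subgradient $\xi\in\partial f_A(\beta^0)$ has the coordinate form $\xi_i=a_{i-1}s_{i-1}-a_is_i+a\,t_i$, where between-block $s_i$ equal the fixed $\sigma_j$, within-block $s_i\in[-1,1]$, $t_i=\operatorname{sgn}(v_j)$ on non-zero blocks and $t_i\in[-1,1]$ on zero blocks, with the relative-interior condition demanding every free parameter to lie in the open interval $(-1,1)$. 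Membership in $\langle U_{\beta^0}\rangle$ then amounts to $\xi$ being constant on each non-zero block and vanishing on each zero block.

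For the sufficiency direction, I would construct $\xi$ block by block. On a non-zero block $B_j$, summing the constancy equations $\xi_i=c_j$ telescopes and fixes $c_j$ in terms of the boundary data; the remaining equations force $s_i=T_i/a_i$, where $T_i$ interpolates linearly between $a_{l_j-1}\sigma_{j-1}$ at position $l_j-1$ and $a_{r_j}\sigma_j$ at position $r_j$. Since $|T_i|$ is dominated by the secant height of $(a_{l_j-1},\dots,a_{r_j})$ at $i$, and first/last blocks use $a_0=a_p=0$, strict concavity of $(0,a_1,\dots,a_{p-1},0)$ delivers $|T_i|<a_i$, i.e.\ $|s_i|<1$. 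On a zero block, set all within-block $s_i=0$; then interior $t_i$ vanish while the endpoint equations give $t_{l_j}=-a_{l_j-1}\sigma_{j-1}/a$ and $t_{r_j}=a_{r_j}\sigma_j/a$, which collapse to $t_{l_j}=(a_{r_j}\sigma_j-a_{l_j-1}\sigma_{j-1})/a$ at a singleton. The hypothesis $a>\max_i(a_i+a_{i+1})$ then guarantees $|t_i|<1$ even in the worst case $\sigma_{j-1}=-\sigma_j$ at a middle singleton, and the $a_0=a_p=0$ convention covers singleton zero blocks at positions $1$ or $p$.

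For the necessity direction, I would exhibit explicit failure witnesses. If $a_{i^*}\le(a_{i^*-1}+a_{i^*+1})/2$ at some interior $i^*$, take $\beta^0$ with a size-two non-zero block $\{i^*,i^*+1\}$ flanked by non-zero values arranged monotonically so that $\sigma_{j-1}=\sigma_j$ (this is the mechanism of Example~\ref{Fused Lasso counterexample}); the constancy equations then force $|s_{i^*}|=(a_{i^*-1}+a_{i^*+1})/(2a_{i^*})\ge 1$, so $\langle U_{\beta^0}\rangle\cap\operatorname{ri}(\partial f_A(\beta^0))=\emptyset$. If $a\le a_{i^*}+a_{i^*+1}$ for some $0\le i^*\le p-1$, take $\beta^0$ with a singleton zero at position $i^*+1$ whose two non-zero neighbors share the same sign, producing $\sigma_{j-1}=-\sigma_j$; the equation $\xi_{i^*+1}=0$ forces $|t_{i^*+1}|=(a_{i^*}+a_{i^*+1})/a\ge 1$, with boundary cases $i^*\in\{0,p-1\}$ handled by placing the singleton zero at position $1$ or $p$ and using $a_0=a_p=0$. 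In either scenario, Theorem~\ref{general pattern recovery} then gives a limiting recovery probability bounded strictly away from $1$ for all $\lambda$.

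The main obstacle is the case bookkeeping for block boundaries, singletons, and blocks adjacent to positions $1$ or $p$, together with verifying that the constructive choice of within-block $s_i$ is simultaneously the unique one enforced by the constancy equations and compatible with the relative-interior constraint exactly when strict concavity holds. The convention $a_0=a_p=0$ together with treating phantom boundary $\sigma$'s as multiplied by zero unifies the otherwise plentiful edge cases; once this is in place, the entire argument collapses onto two elementary inequalities, a secant bound controlling within-block $|s_i|$ via strict concavity and a triangle bound controlling singleton $|t_i|$ via the sparsity penalty, which mirror the two assumptions of the proposition.
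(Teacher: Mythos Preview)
Your proposal is correct and follows essentially the same route as the paper: reduce to the geometric irrepresentability condition $\langle U_{\beta^0}\rangle\cap\operatorname{ri}(\partial f_A(\beta^0))\neq\emptyset$, decompose into consecutive blocks, and on each non-zero block derive the linear-interpolation formula for the within-block parameters whose feasibility is exactly the secant inequality delivered by strict concavity, while on zero blocks the singleton case produces the $a>a_i+a_{i+1}$ requirement. The paper carries out the same computation via a block-diagonal change of basis $E_I(I-P_I)A^T$ rather than your telescoping recurrence, and for necessity of concavity it takes an arbitrary violating triple $(i_1,i_2,i_3)$ and builds a block of size $i_3-i_1$, whereas you first reduce to a consecutive violating triple and use a size-two block; both arrive at the same inequality and both are valid.
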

For $p=1$, $f_A(\beta)=\lambda\vert a\beta\vert$, $\beta\in\mathbb{R}$, the conditions in the Proposition reduce to $a>a_0+a_1=0$. 
We see that if $a>0$, $\beta^0=0$ will be recovered by $\hat{\beta}_n$ as $\lambda$ increases. If $\beta^0\neq 0$, then because $\hat{\beta}_n\rightarrow \beta^0$ in probability (recall that $\sqrt{n}(\hat{\beta}_n-\beta^0)=O_p(1)$ by Theorem \ref{sqrt-asymptotic}), it follows that $\mathbb{P}[sgn(\hat{\beta}_n)=sgn(\beta^0)]$ goes to one as $n\rightarrow\infty$. Conversely, if $a=0$, there will be no shrinkage, and $\beta^0=0$ will not be recovered by $\hat{\beta}_n$.

For $p=2$, $f_A(\beta)=\lambda(a_1\vert\beta_2-\beta_1\vert + a\vert \beta_1\vert+a\vert \beta_2\vert)$, the above conditions read $a>a_1$. Now all patterns are recoverable if and only if (\ref{irrepresentability condition geometric}) holds, i.e. $\langle U_{\beta^0}\rangle \cap ri(\partial f_A(\beta^0))\neq \emptyset$  for every $\beta^0$.  
\begin{figure}[ht]
    \centering
    \begin{tikzpicture}[scale=1]
        \draw[fill = gray!20!white] (1+1,-1+1) -- (1+1,-1-1) -- (1-1,-1-1) -- (-1-1,-1+1) -- (-1-1,1+1) -- (-1+1,1+1) -- cycle;
        \draw[gray!90!white] (1+1.2,-1+1.2) -- (1+1.2,-1-1.2) -- (1-1.2,-1-1.2) -- (-1-1.2,1-1.2) -- (-1-1.2,1+1.2) -- (-1+1.2,1+1.2) -- cycle;
        \draw[-latex,dotted] (-3,0) -- (3,0) node[below] {};
        \draw[-latex, dotted] (0,-3) -- (0,3) node[left] {};
        \node at (1.7,1.7){$\partial f_{A}(0)$};
        \draw[green, line width=1pt] (2,-2) -- (2,0) node[left, green!50!black] at (2.1,-0.9){$ \partial f_{A}(\beta^0)$};
        \draw[green, line width=1pt] (2.2,-2.2) -- (2.2,0.2) ;
        \draw[dotted, purple!90!black, line width=1.4pt] (-2.6,0) -- (2.6,0) node[below] at (-2.6, 0) {$\langle U_{\beta^0} \rangle$};

        \draw[dotted, line width=1pt] (-1,1) -- (-1,2.2) node[left, black] at (-1,1.5){$a$};
        \draw[dotted, line width=1pt] (-1,1) -- (0,1) node[below, black] at (-0.5,1){$a_1$};
        
        
    \end{tikzpicture}
    \caption{Asymptotic irrepresentability condition $\langle U_{\beta^0}\rangle \cap ri(\partial f_A(\beta^0))\neq \emptyset\;\forall \beta^0 \iff a>a_1$.}
    \label{fig: Irrepresentability for Fused Lasso}
\end{figure}
Geometrically, Figure \ref{fig: Irrepresentability for Fused Lasso} illustrates that recovery of all patterns is possible if and only if $a>a_1$. We see that when $a\leq a_1$, $\langle U_{\beta^0}\rangle \cap ri(\partial f_A(\beta^0))= \emptyset$, the pattern of $\beta^0=(1,0)^T$ will not be recovered with high probability.
\subsection{SLOPE}
The SLOPE norm \cite{bogdan2015slope} (resp. OSCAR \cite{Bondell2008oscar}, OWL\cite{OWL2}) is defined through a non-increasing sequence $\lambda_1\geq\dots\geq\lambda_p\geq0$,
\begin{equation*}
    J_{\boldsymbol{\lambda}}(\beta):= \sum_{i=1}^{p}\lambda_i\vert\beta\vert_{(i)},
\end{equation*}
where $\vert \beta\vert_{(\cdot)}$ is the order statistic of $(\vert \beta_1\vert,\dots,\vert \beta_p\vert)$, i.e., $\vert\beta\vert_{(1)}\geq\dots\geq\vert\beta\vert_{(p)}$. 

The SLOPE penalty can be recast as $J_{\boldsymbol{\lambda}}(x) = \max\{ \langle P\boldsymbol{\lambda},x\rangle: P\in\mathcal{S}_p^{+/-}\}$, where $\mathcal{S}_p^{+/-}$ is the set of signed permutation matrices. If $\lambda_1>\dots>\lambda_p>0$, the pattern $I(x)\subset\mathcal{S}^{+/-}_p$ can be identified with $I(x)\cong \mathbf{patt}(x):=rank(\vert x_i\vert)sgn(x_i)$, since
$\mathbf{patt}(x)=\mathbf{patt}(y)$ if and only if $\partial J_{\boldsymbol{\lambda}}(x)=\partial J_{\boldsymbol{\lambda}}(y)$, see \cite{schneider2022geometry}.
For penalty vectors $\boldsymbol{\lambda}$, which are not strictly decreasing, the set of all patterns $\mathfrak{P}$ contains fewer elements, and the identification $I(x)\cong \mathbf{patt}(x)$ no longer holds. The subdifferential 
\begin{equation*}
    \partial J_{\boldsymbol{\lambda}}(x)= con\{P\boldsymbol{\lambda}: P\in I(x)\subset \mathcal{S}^{+/-}_p\},
\end{equation*}
is described more explicitly in Appendix \ref{SLOPE subdifferential Appendix}.  The subdifferential of the SLOPE norm has already been explored in \cite{bu2019algorithmic,bu2020algorithmic,larsson2020strong, schneider2022geometry, tardivel2020simple}. 
We refer the reader to \cite{bu2019algorithmic} for further details about the representation of the SLOPE subdifferential in terms of Birkhoff polytopes and to \cite{schneider2022geometry} and \cite{larsson2020strong} for different derivations of the SLOPE subdifferential.

The directional derivative $f'(\beta^0; u) = J'_{\boldsymbol{\lambda}}(\beta^0; u)$ is given by 
\begin{equation}
    J'_{\boldsymbol{\lambda}}({\beta^0};u)=\sum\limits_{i=1}^p\lambda_{\pi(i)}\left[u_i sgn(\beta^0_i)\mathbb{I}[\beta^0_i\neq0]+\vert u_i\vert\mathbb{I}[\beta^0_i=0]\right]\label{directional SLOPE derivative},
\end{equation}
where $\pi$ is a permutation which sorts the vector $\vert \beta^0+\varepsilon u\vert=(\vert \beta^0_1+\varepsilon u_1\vert, \dots, \vert \beta^0_p+\varepsilon u_p\vert )$ for $\varepsilon>0$ sufficiently small \footnote{This means that $ \vert \beta^0+\varepsilon u\vert_{\pi^{-1}(1)}\geq...\geq\vert \beta^0+\varepsilon u\vert_{\pi^{-1}(p)}$ as $\varepsilon\downarrow 0$.}, for derivation, see Appendix \ref{appendix directional derivative for SLOPE}. Note that the Lasso directional derivative, described in \cite{fu2000asymptotics}, is a special case of (\ref{directional SLOPE derivative}), where the permutation $\pi$ is omitted. 

In the context of SLOPE, as a consequence of Theorem \ref{main theorem pattern convergence of convex sets} and Corollary \ref{asymptotic sampling corollary general}, for any pattern $\mathfrak{p}\in\mathfrak{P}$ we have:
\begin{align}\label{SLOPE pattern asymptotics}
\mathbb{P}[\mathbf{patt}(\hat{u}_n)=\mathfrak{p}]&\xrightarrow[n\rightarrow\infty]{}\mathbb{P}[\mathbf{patt}(\hat{u})=\mathfrak{p}],\nonumber\\
\mathbb{P}[\mathbf{patt}(\hat{\beta}_n)=\mathfrak{p}]&\xrightarrow[n\rightarrow\infty]{}\mathbb{P}[\mathbf{patt}_{\beta^0}(\hat{u})=\mathfrak{p}],
\end{align}

where $\hat{u}$ minimizes (\ref{V(u)}) and $\mathbf{patt}_{\beta^0}(u)=\operatorname{lim}_{\varepsilon\downarrow0}\mathbf{patt}(\beta^0+\varepsilon u)$ denotes the limiting pattern. We note that (\ref{SLOPE pattern asymptotics}) remains valid even for a penalty vector $\boldsymbol{\lambda}$, which is not strictly decreasing, despite the fact that identification $I(x)\cong \mathbf{patt}(x)=rank(\vert x_i\vert)sgn(x_i)$ no longer holds. This follows from Theorem \ref{main theorem pattern convergence of convex sets}. In fact, (\ref{SLOPE pattern asymptotics}) holds for any sequence of penalties $\boldsymbol{\lambda}^n/\sqrt{n}\rightarrow\boldsymbol{\lambda}\geq0$.
A closed form expression for the limiting probability of pattern recovery for SLOPE has been described in Theorem 4.2 i) \cite{bogdan2022pattern}. The result also follows from Theorem \ref{general pattern recovery} and Remark \ref{remark after general pattern recovery}:
\begin{align*}
     &\mathbb{P}\big[\mathbf{patt}(\hat{\beta}_n)=\mathbf{patt}(\beta^0)\big]\underset{n\rightarrow\infty}{\longrightarrow}  \mathbb{P} \big[\zeta\in \partial J_{\boldsymbol{\lambda}}(\beta^0) \big]=\mathbb{P} \big[\zeta\in \partial J_{\boldsymbol{\lambda}}(0) \big],\\
    &\zeta\sim\mathcal{N}(C^{1/2}P C^{-1/2}\Lambda_0, \sigma^2 C^{1/2}(I-P)C^{1/2}),
\end{align*}
where $\Lambda_0\in \partial J_{\boldsymbol{\lambda}}(\beta^0)$ and $P$ is the projection matrix onto $C^{1/2}\langle U_{\beta^0}\rangle$. Explicitly, the pattern space $\langle U_{\beta^0}\rangle$ is spanned by the matrix $U_{\beta^0}=(\mathbf{1}_{I_m}\vert\dots\vert\mathbf{1}_{I_{1}})$, where $\{I_0, I_1, \dots,I_m\}$ is the corresponding partition of $\{1,\dots,p\}$ according to the clusters of $\beta^0$, and $\mathbf{1}_{I}\in\mathbb{R}^p$ the vector of ones supported on $I$. In the context of SLOPE, a cluster of $\beta^0$ is a subset $I\subset\{1,\dots,p\}$ such that $\vert \beta_i\vert=\vert \beta_j\vert$ for $i,j \in I$. Also, $\Lambda_0 = P_0\boldsymbol{\lambda}$, where $P_0$ is any matrix in $I(\beta^0)=\operatorname{argmax}_{P\in\mathcal{S}}\langle P\boldsymbol{\lambda},\beta^0\rangle$. For details, see Appendix \ref{SLOPE subdifferential Appendix}.
 
 \begin{example}
    We illustrate the results for the SLOPE norm $f(\beta)=J_{\boldsymbol{\lambda}}(\beta)$ with $\boldsymbol{\lambda}=(3,2)$. Let $\beta^0=(1,0)$, so that $\partial J_{\boldsymbol{\lambda}}(\beta^0)= con\{(3,2),(3,-2)\}$. The pattern matrix $U_{\beta^0}=(1,0)^T$ and $v_0=(3,2)\in\partial J_{\boldsymbol{\lambda}}(\beta^0)$. Let $C$ be unit diagonal with $\rho$ off diagonal. Condition (\ref{irrepresentability condition}) reads $C (3,0)^T=(3,3\rho)^T\in \partial J_{\boldsymbol{\lambda}}(\beta^0)$, or $\vert\rho\vert<2/3$. Let $\Lambda_1,\Lambda_2, \Lambda_3$ equal to $(3,3\rho)^T$ for $\rho=2/3+0.05, 2/3$ and $2/3-0.05$, respectively. Figure \ref{fig: Pattern recovery by SLOPE} shows that exact asymptotic pattern recovery is achieved if and only if the irrepresentability condition $\vert\rho\vert<2/3$ holds.

\begin{figure}[ht]
    \centering
    \begin{subfigure}[b]{0.48\textwidth} 
        \begin{tikzpicture}[scale=1]
            \draw[fill = gray!20!white] (3,-2) -- (3,2) -- (2,3) -- (-2,3) -- (-3,2)-- (-3,-2)-- (-2,-3)-- (2,-3) -- cycle;
            \draw[-latex,dotted] (-3.5,0) -- (3.8,0) node[below] {};
            \draw[-latex, dotted] (0,-3.5) -- (0,3.8) node[left] {};
            \node at (-1.2,1){$\partial J_{\boldsymbol{\lambda}}(0)$};
            \draw[green, line width=1.4pt] (3,-2) -- (3,2) node[left, green!50!black] at (3,-0.6){$\partial J_{\boldsymbol{\lambda}}(\beta^0)$};
            \draw[dotted, purple!90!black, line width=1.4pt] (3,-4) -- (3,4) node[left] at (3,-3.6) {$ \Lambda_0 + \langle U_{\beta^0} \rangle^{\perp}$};
            \filldraw[black] (3,-2) circle (2.2pt) node[right, black] (p0) at (3, -2) {$\Lambda_0$};
            \filldraw[red] (3,2.4) circle (2.2pt) node[above right, red] (p1) at (3, 2.4) {$\Lambda_1$};
            \filldraw[blue] (3,2) circle (2.2pt) node[right, blue] (p2) at (3, 2) {$\Lambda_2$};
            \filldraw[green!50!black] (3,1.6) circle (2.2pt) node[below right, green!50!black] (p3) at (3, 1.6) {$\Lambda_3$};
        \end{tikzpicture}
        \caption{Asymptotic irrepresentability condition satisfied for $\Lambda_3$ and violated for $\Lambda_1$ and $\Lambda_2$.}
        \label{fig:square_line}
    \end{subfigure}
    \hfill
    \begin{subfigure}[b]{0.48\textwidth} 
        \centering
        \includegraphics[width=\textwidth]{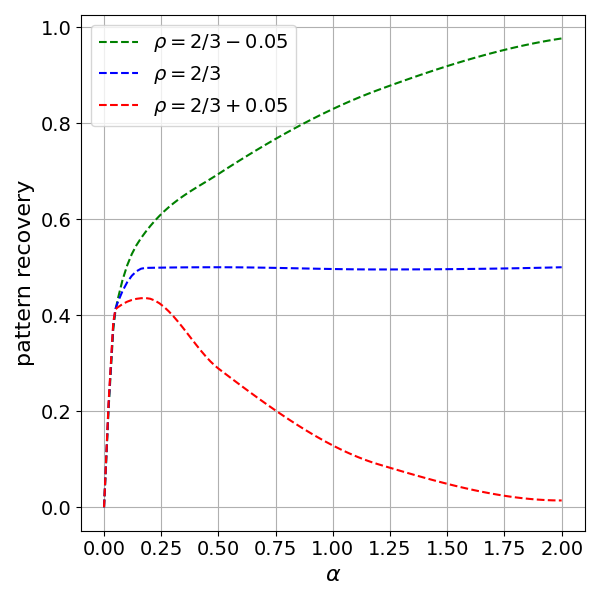}
        \caption{A phase transition in pattern recovery at $\rho=2/3$; $C=[[1,\rho],[\rho,1]]$, $\boldsymbol{\lambda}=\alpha[3,2], \sigma = 0.2$.}
        \label{fig:subfig_b}
    \end{subfigure}
    \caption{Asymptotic pattern recovery for SLOPE}
    \label{fig: Pattern recovery by SLOPE}
\end{figure}
\end{example}

\section{Simulations}\label{simulations}
We illustrate Theorem \ref{sqrt-asymptotic} in some simulations. We sample the asymptotic error $\hat{u}$, which minimizes $u^TCu/2-u^T W + \alpha f'(\beta^0;u)$, with $W\sim\mathcal{N}(0,\sigma^2C)$, $\alpha>0$. For Lasso and Fused Lasso, we use the ADMM algorithm and for SLOPE the proximal gradient descent \ref{proximal operator}. We compute the root mean squared error (RMSE) $(\mathbb{E}\Vert\hat{u}\Vert^{2})^{1/2}$ and the limiting probability of recovering the true pattern $\operatorname{lim}_{n\rightarrow\infty}\mathbb{P}[\mathbf{patt}(\hat{\beta}_n) = \mathbf{patt}(\beta^0)]$, (specifically, the exact SLOPE pattern).\footnote{The code for simulations can be found at https://github.com/IvanHejny/asymptotic-error-of-regularizers.git} We note that the distribution of $\hat{u}$ depends only on the pattern of $\beta^0$.   
\begin{figure}[bt]
      \centering
	     \begin{subfigure}{0.32\linewidth}
		 \includegraphics[width=\linewidth]{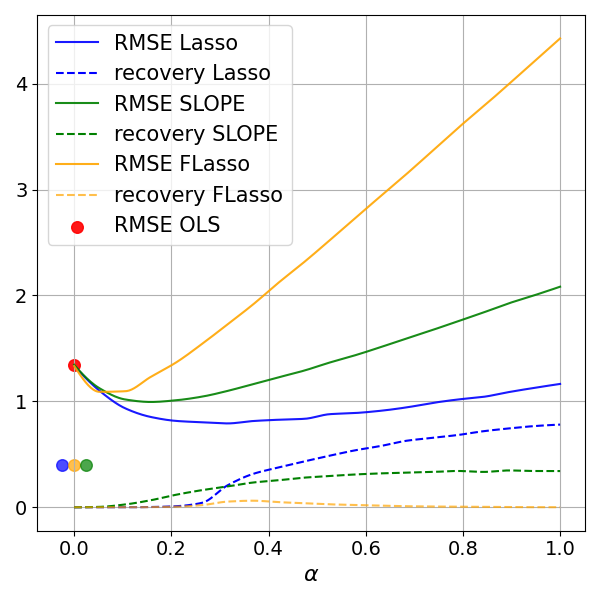}
		 \caption{  
        $\beta^0 = [0, 0, 1, 0] $}
	      \end{subfigure}
	      \begin{subfigure}{0.32\linewidth}
		  \includegraphics[width=\linewidth]{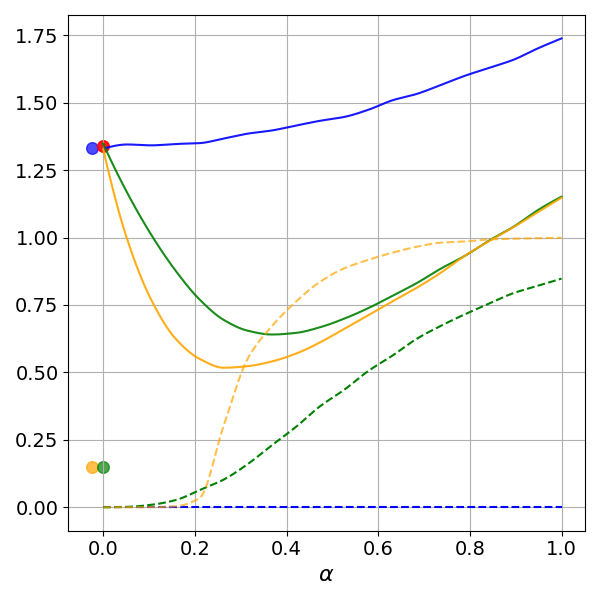}
		  \caption{  $
        \beta^0 = [1, 1, 1, 1]$}
	       \end{subfigure}
        \begin{subfigure}{0.32\linewidth}
		  \includegraphics[width=\linewidth]{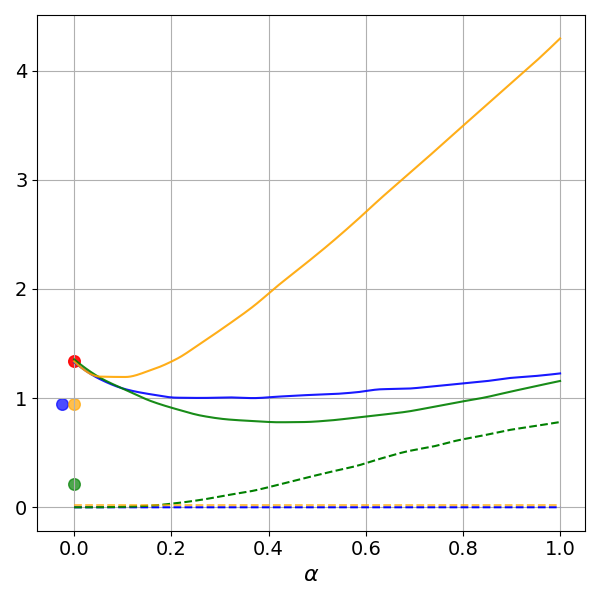}
		  \caption{  $
        \beta^0 = [1, 0, 1, 0]$}
	       \end{subfigure}
	\caption{Comparing root mean squared error (RMSE) for different methods together with the probability of pattern recovery, (i.e. correctly identifying all zeros and all clusters). }
	\label{fig: comparing RMSE}
\end{figure}

Figure \ref{fig: comparing RMSE} illustrates how performance depends on the pattern of the signal $\beta^0$. We consider a linearly decaying sequence as the penalty coefficients in SLOPE. This corresponds to the OSCAR sequence \cite{Bondell2008oscar}. In (a), Lasso best exploits the sparsity of $\beta^0$ and outperforms both SLOPE and Fused Lasso. In (b), Fused Lasso performs best, taking advantage of the consecutively clustered signal. Finally, in $c)$, SLOPE can discover clusters in nonneighboring coefficients, which the Fused Lasso cannot. In this situation, SLOPE has better estimation properties than the other methods.

Moreover, to showcase the strength of dimensionality reduction, we visualize the RMSE of the OLS in the reduced model, assuming perfect knowledge of the signal pattern. This is depicted as dots of the corresponding color. The reduced OLS error, given the signal pattern $I_f(\beta^0)$, can be computed by replacing the design matrix $X$ in (\ref{SLOPE-estimator_n}) with the reduced $XU_{\beta^0}$, where $U_{\beta^0}$ is a pattern matrix depending on $f$, as 
\begin{equation*}
    \hat{u}_{OLS(I_f(\beta^0))}\sim \mathcal{N}(0, \sigma^2 (U_{\beta^0}^TCU_{\beta^0})^{-1}).
\end{equation*}
In Figure \ref{fig: comparing RMSE}, the Lasso penalty is equal to $\alpha$, the SLOPE penalty $\alpha[1.6, 1.2, 0.8, 0.4]$, and the Fused Lasso penalty is $\alpha (\sum_{i=1}^3\vert\beta_{i+1}-\beta_i\vert+\sum_{i=1}^4\vert\beta_i\vert)$. The covariance $C$ is given by 
\begin{equation*}
    C=\left(\begin{matrix}
        1 & 0 & 0.8 & 0\\
        0 & 1 & 0 & 0.8\\
        0.8 & 0 & 1 & 0\\
        0 & 0.8 & 0 & 1
    \end{matrix}\right).
\end{equation*}
\begin{figure}[ht]
      \centering
	     \begin{subfigure}{0.5\linewidth}
		 \includegraphics[width=\linewidth]{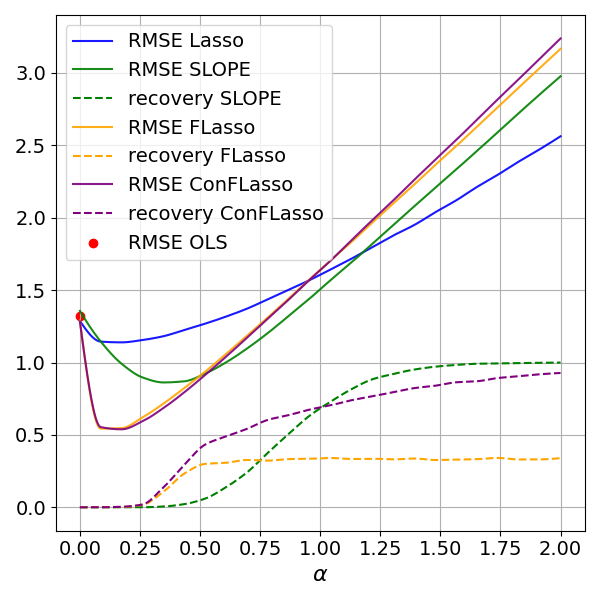}
		 \caption{  
       $\beta^0 = [0, 0, 0, 1, 1, 1, 3, 3, 3, 2, 2, 2]$}
	      \end{subfigure}
	\caption{Comparing root mean squared error (RMSE) for different methods together with the probability of pattern recovery, (i.e. correctly identifying all zeros and all clusters). }
	\label{fig:concavifie lasso plot}
\end{figure}
We also note that the choice for the SLOPE sequence is not optimal and can be improved by choosing a different tuning, depending on the signal. For example, in b), the penalty sequence $\alpha[4,0,0,0]$ achieves better estimation and pattern recovery than the linear OSCAR sequence above. 

In Figure \ref{fig:concavifie lasso plot}, the Lasso penalty is equal to $\alpha$, and the SLOPE penalty sequence is linear $\alpha \lambda_i$ with $\lambda_i=12i/\sum_{i=1}^{12} i$, so the total penalization is $\sum_{i=1}^{12}\lambda_i=12$. The Concavified Fused Lasso is set to $\alpha( \sum_{i=1}^8a_i\vert \beta_{i+1}-\beta_i\vert +  \sum_{i=1}^9 \vert\beta_i\vert)$, with a concave clustering sequence $a_i = \nu(1 + \kappa i(9-i))$ with concavity parameter $\kappa=0.04$ and clustering parameter $\nu=0.8$. 
The Fused Lasso has $\kappa=0$ and the clustering parameter is set to be the average $\nu=(1/8)\sum_{i=1}^8 a_i$ of the Concavified Fused Lasso. The covariance $C$ is block-diagonal consisting of four $3\times3$ unit diagonal blocks with $0.8$ off-diagonal entries; $\sigma = 0.2$ respectively. 
\subsection{three-step procedure}
\begin{figure}[bt]
      \centering
	     \begin{subfigure}{0.32\linewidth}
		 \includegraphics[width=\linewidth]{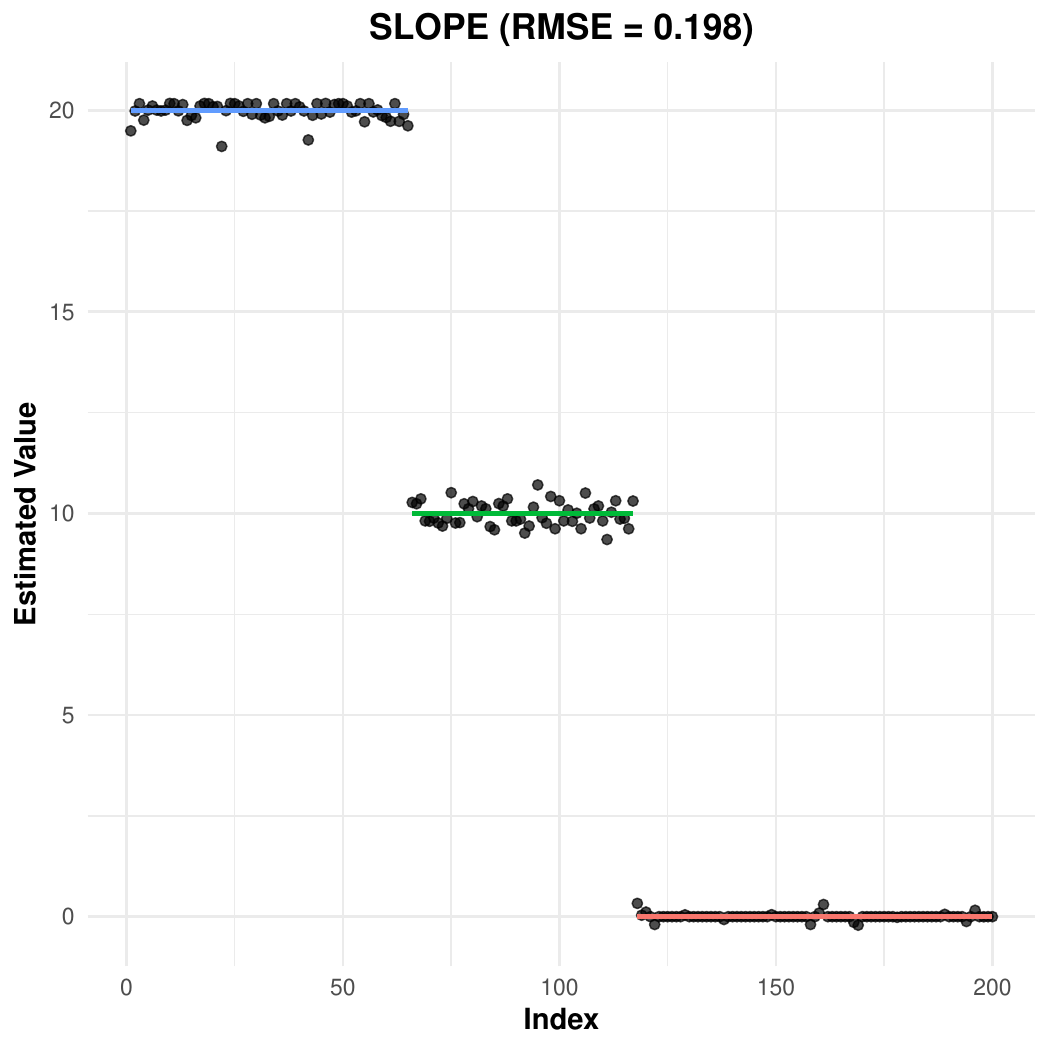}
	      \end{subfigure}
	      \begin{subfigure}{0.32\linewidth}
		  \includegraphics[width=\linewidth]{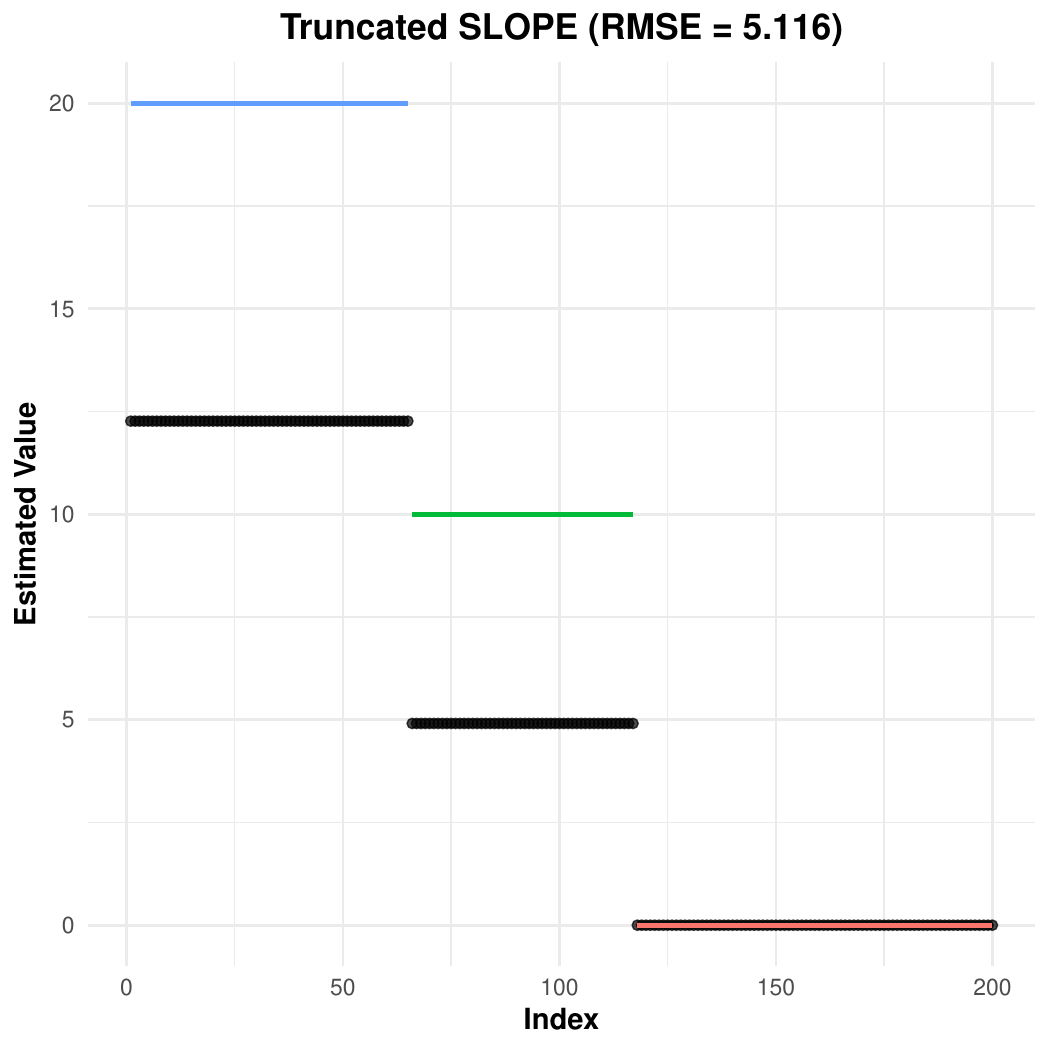}
	       \end{subfigure}
        \begin{subfigure}{0.32\linewidth}
		  \includegraphics[width=\linewidth]{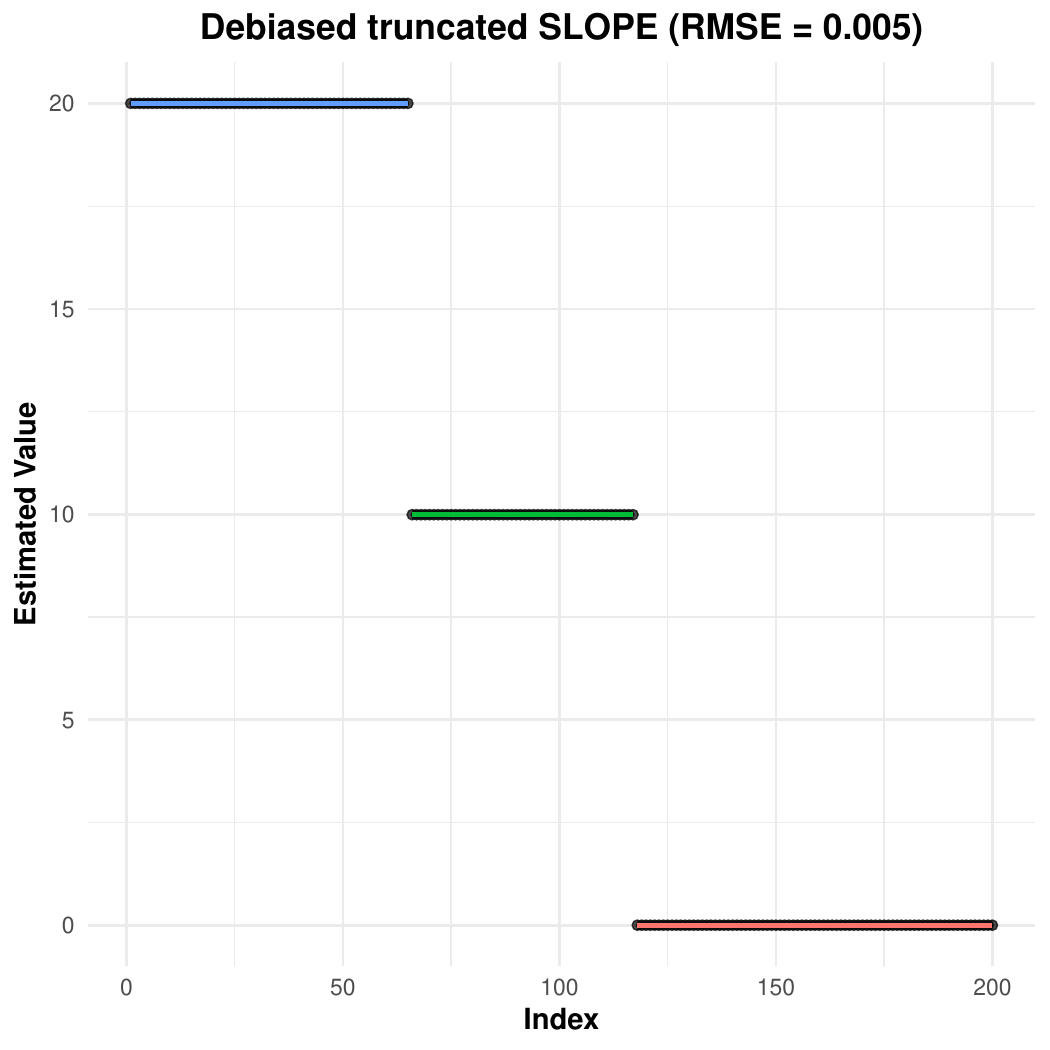}
	       \end{subfigure}
	\caption{The black dots shows the estimated $\beta$ coefficients for all three steps in the Three-step procedure for SLOPE. The lines corresponds to the true coeffients.}
	\label{fig: two-step procedure}
\end{figure}

To illustrate the three-step estimation procedure in a high-dimensional scenario, we simulate data as follows.\footnote{ The code can be found at https://github.com/IvanHejny/Three-step-procedure-for-SLOPE.git}
The design matrix $X$ is $n \times p$ with $n = 100$ and $p = 200$. Each row of $X$ is sampled i.i.d.\ from a $\mathcal{N}(0, C)$ distribution, where $C$ is a block-diagonal covariance matrix consisting of $20$ blocks. Each block is a $10 \times 10$ correlation matrix whose diagonal entries are $1$ and off-diagonal entries are $0.8$. The true coefficient vector $\beta^0$ has three clusters: $\beta^0_i = 20$ for $1 \le i \le 65$, $\beta^0_i = 10$ for $66 \le i \le 117$, and $\beta^0_i = 0$ for $118 \le i \le 200$. Finally, the noise term $\varepsilon$ is drawn from $\mathcal{N}(0,\sigma^2 I)$ with $\sigma = 0.8$.

\medskip
\noindent
Figure~\ref{fig: two-step procedure} illustrates the three-step estimation procedure~\eqref{two-step procedure} and~\eqref{three-step procedure}. 
\begin{itemize}
\item \textbf{Step 1 (left):} We obtain the initial SLOPE estimate $\hat{\beta}^{(1)}$, using the Benjamini--Hochberg sequence $\boldsymbol{\lambda} = 0.07\,n^{-1/2}\mathrm{BH}(0.5)$. 
\item \textbf{Step 2 (middle):} We form the truncated estimate~\eqref{two-step procedure} 
$
\hat{\beta}^{(2)} \;=\; \mathrm{Prox}_{42\,n^{-1/2}\,J_{\boldsymbol{\lambda}}(\cdot)}\bigl(\hat{\beta}^{(1)}\bigr).
$
\item \textbf{Step 3 (right):} We compute the reduced OLS estimate~\eqref{three-step procedure}.
\end{itemize}

\noindent
From the figure, we observe that:
\begin{enumerate}
\item In Step~1, the overall magnitude and support of the coefficients are identified reasonably well, but the cluster structure is not recovered.
\item In Step~2, the clusters are recovered, although this step introduces a heavy bias.
\item In Step~3, the reduced OLS step corrects this bias and yields more accurate coefficient estimates.
\end{enumerate}

\section{Discussion}
In this article, we proposed a general theoretical framework for the asymptotic analysis of pattern recovery for a broad class of regularizers, including Lasso, Fused Lasso, Elastic Net, or SLOPE. We argue that the ``classical'' asymptotic framework, where the model dimension $p$ is fixed and $n\rightarrow\infty$, can provide deep insight into both the model selection properties and the estimation accuracy. This is achieved by studying the asymptotic distribution of the error $\hat{u}_n=\sqrt{n}(\hat{\beta}_n-\beta^0)$. 
We showed that the analysis of pattern convergence for regularizers requires a separate treatment, as it is not a simple consequence of the distributional convergence of $\hat{u}_n$. We solved this by using the Hausdorff distance as a suitable mode of convergence for subdifferentials, which leads to the desired pattern recovery.

We demonstrated how our asymptotic analysis can lead to new methodological insights, such as concavifying the penalty coefficients in Fused Lasso; a remedy for its inability to recover its own model under the random design with independent regressors. We believe that our framework provides a fertile ground for further such discoveries.

We conducted a small simulation study to compare the performance of different regularizers in terms of their estimation accuracy and pattern recovery. We illustrate that performance depends on whether the estimator can ``access the underlying structure'' of the signal. We observed that SLOPE, with the strictly decreasing sequence of the tuning parameters, can take advantage of general non-consecutive cluster structures, which are invisible to Lasso or the Fused Lasso, and performs reasonably well for various scenarios. However, in cases where the clustering structure is absent and the signal is relatively sparse, Lasso (corresponding to the constant SLOPE sequence) can be more efficient in discovering the respective sparsity pattern. Similarly, when clustering occurs between prespecified ``neighboring'' regressors, then the specialized Fused Lasso can outperform both SLOPE and Lasso.

Furthermore, we proposed an easy yet effective two-step procedure that recovers the true model pattern for any covariance structure of the regressors, thus circumventing the rather restrictive irrepresentability condition. By employing this as a dimensionality reduction tool, we believe that there is great potential for further methodological development, especially in combination with third-order methods.

The asymptotic results presented in this paper focus on classical asymptotics, where the model dimension $p$ is fixed and $n$ diverges to infinity. Our analysis reveals that even in this classical setup, deriving results on pattern convergence requires the development of new tools and substantially more care compared to the convergence of the vector of parameter estimates. We believe that our framework, based on the weak convergence of patterns, can be extended to the analysis of regularizers in a high-dimensional setup. We consider our work an important first step in this direction.

\section{Acknowledgments}
The authors acknowledge the support of the Swedish Research Council, grant no. 2020-05081. We would especially like to thank Alexandre B. Simas for his revisions and insightful comments on the Hausdorff distance. We would also like to thank Elvezio Ronchetti for the discussions on the robust versions of SLOPE and further A.W. van der Vaart, Wojciech Reichel, Ulrike Schneider, Piotr Graczyk, Bartosz Ko{\l}odziejek, Tomasz Skalski, and Patrick Tardivel for their helpful comments and discussions.

\bibliographystyle{plain}
\newpage
\bibliography{citation}

\begin{thebibliography}{10}

\bibitem{10.2307/2284712}
R.~E. Barlow and H.~D. Brunk.
\newblock The isotonic regression problem and its dual.
\newblock {\em Journal of the American Statistical Association},
  67(337):140--147, 1972.

\bibitem{bogdan2022pattern}
Ma{\l}gorzata Bogdan, Xavier Dupuis, Piotr Graczyk, Bartosz Ko{\l}odziejek,
  Tomasz Skalski, Patrick Tardivel, and Maciej Wilczy{\'n}ski.
\newblock Pattern recovery by slope.
\newblock {\em arXiv preprint arXiv:2203.12086}, 2022.

\bibitem{bogdan2015slope}
Ma{\l}gorzata Bogdan, Ewout Van Den~Berg, Chiara Sabatti, Weijie Su, and
  Emmanuel~J Cand{\`e}s.
\newblock Slope—adaptive variable selection via convex optimization.
\newblock {\em The annals of applied statistics}, 9(3):1103, 2015.

\bibitem{Bondell2008oscar}
Howard~D. Bondell and Brian~J. Reich.
\newblock {Simultaneous Regression Shrinkage, Variable Selection, and
  Supervised Clustering of Predictors with OSCAR}.
\newblock {\em Biometrics}, 64(1):115--123, 02 2008.

\bibitem{bu2019algorithmic}
Zhiqi Bu, Jason Klusowski, Cynthia Rush, and Weijie Su.
\newblock Algorithmic analysis and statistical estimation of slope via
  approximate message passing.
\newblock {\em Advances in Neural Information Processing Systems}, 32, 2019.

\bibitem{bu2020algorithmic}
Zhiqi Bu, Jason~M Klusowski, Cynthia Rush, and Weijie~J Su.
\newblock Algorithmic analysis and statistical estimation of slope via
  approximate message passing.
\newblock {\em IEEE Transactions on Information Theory}, 67(1):506--537, 2020.

\bibitem{OWL2}
Mario Figueiredo and Robert Nowak.
\newblock Ordered weighted l1 regularized regression with strongly correlated
  covariates: Theoretical aspects.
\newblock In {\em Artificial Intelligence and Statistics}, pages 930--938.
  PMLR, 2016.

\bibitem{fu2000asymptotics}
Wenjiang Fu and Keith Knight.
\newblock Asymptotics for lasso-type estimators.
\newblock {\em Annals of Statistics}, 28(5):1356--1378, 2000.

\bibitem{geyer1994asymptotics}
Charles~J Geyer.
\newblock On the asymptotics of constrained m-estimation.
\newblock {\em The Annals of statistics}, pages 1993--2010, 1994.

\bibitem{graczyk2023pattern}
Piotr Graczyk, Ulrike Schneider, Tomasz Skalski, and Patrick Tardivel.
\newblock Pattern recovery in penalized and thresholded estimation and its
  geometry.
\newblock {\em arXiv preprint arXiv:2307.10158}, 2023.

\bibitem{hiriart2013convex}
Jean-Baptiste Hiriart-Urruty and Claude Lemar{\'e}chal.
\newblock {\em Convex analysis and minimization algorithms I: Fundamentals},
  volume 305.
\newblock Springer science \& business media, 2013.

\bibitem{jia2015preconditioning}
Jinzhu Jia and Karl Rohe.
\newblock {Preconditioning the Lasso for sign consistency}.
\newblock {\em Electronic Journal of Statistics}, 9(1):1150 -- 1172, 2015.

\bibitem{larsson2020strong}
Johan Larsson, Malgorzata Bogdan, and Jonas Wallin.
\newblock The strong screening rule for slope.
\newblock {\em Advances in neural information processing systems},
  33:14592--14603, 2020.

\bibitem{pmlr-v206-larsson23a}
Johan Larsson, Quentin Klopfenstein, Mathurin Massias, and Jonas Wallin.
\newblock Coordinate descent for slope.
\newblock In Francisco Ruiz, Jennifer Dy, and Jan-Willem van~de Meent, editors,
  {\em Proceedings of The 26th International Conference on Artificial
  Intelligence and Statistics}, volume 206 of {\em Proceedings of Machine
  Learning Research}, pages 4802--4821. PMLR, 25--27 Apr 2023.

\bibitem{Lewis2002ActiveSets}
A.~S. Lewis.
\newblock Active sets, nonsmoothness, and sensitivity.
\newblock {\em SIAM Journal on Optimization}, 13(3):702--725, 2002.

\bibitem{rockafellar2015convex}
Ralph~Tyrell Rockafellar.
\newblock {\em Convex Analysis : (PMS-28)}.
\newblock Princeton Landmarks in Mathematics and Physics. "Princeton University
  Press", New Jersey :, 2015.

\bibitem{rockafellar2009variational}
R.Tyrrell Rockafellar, Maria Wets, and Roger~J.B. Wets.
\newblock {\em Variational Analysis}.
\newblock Grundlehren der mathematischen Wissenschaften. Springer Berlin
  Heidelberg, 2009.

\bibitem{schneider2022geometry}
Ulrike Schneider and Patrick Tardivel.
\newblock The geometry of uniqueness, sparsity and clustering in penalized
  estimation.
\newblock {\em Journal of Machine Learning Research}, 23(331):1--36, 2022.

\bibitem{shapiro2000asymptotics}
Alexander Shapiro.
\newblock On the asymptotics of constrained local m-estimators.
\newblock {\em Annals of statistics}, pages 948--960, 2000.

\bibitem{tardivel2020simple}
Patrick~JC Tardivel, R{\'e}mi Servien, and Didier Concordet.
\newblock Simple expressions of the lasso and slope estimators in
  low-dimension.
\newblock {\em Statistics}, 54(2):340--352, 2020.

\bibitem{tibshirani1996regression}
Robert Tibshirani.
\newblock Regression shrinkage and selection via the lasso.
\newblock {\em Journal of the Royal Statistical Society: Series B
  (Methodological)}, 58(1):267--288, 1996.

\bibitem{Tib2005fusedLasso}
Robert Tibshirani, Michael Saunders, Saharon Rosset, Ji~Zhu, and Keith Knight.
\newblock {Sparsity and Smoothness Via the Fused Lasso}.
\newblock {\em Journal of the Royal Statistical Society Series B: Statistical
  Methodology}, 67(1):91--108, 12 2004.

\bibitem{tibshirani2011solution}
Ryan~J Tibshirani.
\newblock {\em The solution path of the generalized lasso}.
\newblock Stanford University, 2011.

\bibitem{vaiter2017model}
Samuel Vaiter, Gabriel Peyr{\'e}, and Jalal Fadili.
\newblock Model consistency of partly smooth regularizers.
\newblock {\em IEEE Transactions on Information Theory}, 64(3):1725--1737,
  2017.

\bibitem{vanderVaart1996}
Aad~W. van~der Vaart and Jon~A. Wellner.
\newblock {\em Weak Convergence}, pages 16--28.
\newblock Springer New York, New York, NY, 1996.

\bibitem{vershynin2018high}
Roman Vershynin.
\newblock {\em High-dimensional probability: An introduction with applications
  in data science}, volume~47.
\newblock Cambridge university press, 2018.

\bibitem{zhao2006model}
Peng Zhao and Bin Yu.
\newblock On model selection consistency of lasso.
\newblock {\em The Journal of Machine Learning Research}, 7:2541--2563, 2006.

\bibitem{HuiZouAdaptiveLasso}
Hui Zou.
\newblock The adaptive lasso and its oracle properties.
\newblock {\em Journal of the American Statistical Association},
  101(476):1418--1429, 2006.

\bibitem{ZouHastie2005ElasticNet}
Hui Zou and Trevor Hastie.
\newblock Regularization and variable selection via the elastic net.
\newblock {\em Journal of the Royal Statistical Society. Series B (Statistical
  Methodology)}, 67(2):301--320, 2005.

\end{thebibliography}

\newpage

\begin{appendix}
\section{Appendix}\label{Appendix}
\subsection{directional derivative for SLOPE}\label{appendix directional derivative for SLOPE}
Here we compute the directional derivative for SLOPE $J'_{\boldsymbol{\lambda}}(x;u)$ at $x$ in direction $u$. For fixed $u\in\mathbb{R}^p$ there exists a permutation $\pi$, which sorts $\vert x+\varepsilon u\vert$ for all sufficiently small $\varepsilon$, i.e. $\vert x+\varepsilon u\vert_{\pi^{-1}(1)}\geq...\geq\vert x+\varepsilon u\vert_{\pi^{-1}(p)}$ as $\varepsilon\downarrow 0$. At the same time we have $\vert x\vert_{\pi^{-1}(1)}\geq...\geq\vert x\vert_{\pi^{-1}(p)}$. Consequently, for such $\pi$ and $\varepsilon>0$ sufficiently small;
\begin{align*}
    J_{\boldsymbol{\lambda}}(x+\varepsilon u)-J_{\boldsymbol{\lambda}}(x)
    &= \sum\limits_{j=1}^p\boldsymbol{\lambda}_j\left[\vert x+\varepsilon u\vert_{\pi^{-1}(j)}-\vert x\vert_{\pi^{-1}(j)}\right]\nonumber\\
    & = \sum\limits_{i=1}^p\boldsymbol{\lambda}_{\pi(i)}\left[\vert x_i+\varepsilon u_i\vert-\vert x_i\vert\right]\nonumber\\    &=\sum\limits_{i=1}^p\boldsymbol{\lambda}_{\pi(i)}\left[\varepsilon u_i sgn(x_i) \mathbb{I}[x_i\neq 0]+ \varepsilon\vert u_i\vert\mathbb{I}[x_i=0]\right]\nonumber.
\end{align*}
Therefore 
\begin{align*}
    J'_{\boldsymbol{\lambda}}(x;u) = \sum\limits_{i=1}^p\boldsymbol{\lambda}_{\pi(i)}\left[ u_i sgn(x_i) \mathbb{I}[x_i\neq 0]+ \vert u_i\vert\mathbb{I}[x_i=0]\right].
\end{align*}
\subsection{Subdifferential for SLOPE}\label{SLOPE subdifferential Appendix}
Let $\mathcal{S}$ denote the set of all signed permutations. Then
\begin{align*}
    J_{\boldsymbol{\lambda}}(x)&=\max\{\langle P\boldsymbol{\lambda},x\rangle: P\in \mathcal{S}\},\\
    \partial J_{\boldsymbol{\lambda}}(x)&= con\{P\boldsymbol{\lambda}: P\in I(x)\},\\
    I(x)&=\operatorname{argmax}_{P\in\mathcal{S}}\langle P\boldsymbol{\lambda},x\rangle,
\end{align*}
 More explicitly, let $\mathcal{I}(x)=\{I_0,I_1,\dots,I_m\}$ be the partition of $\{1,\dots,p\}$ into the clusters of $x$. Let $S_x$ be the diagonal matrix, s.t. $(S_x)_{ii}=1$ for $i\in I_0$, and $(S_x)_{ii}=sgn(x_i)$ else. Also, fix $\Pi_x \in \mathcal{S}$, such that 
\begin{equation*}
    \langle \Pi_x\boldsymbol{\lambda},\vert x\vert\rangle= J_{\boldsymbol{\lambda}}(x),
\end{equation*}
i.e. the maximum is attained. 
Finally, consider the group of symmetries of $\vert x\vert$ in $\mathcal{S}$:
\begin{align*}
    Sym(\vert x \vert)&=\{\Sigma \in \mathcal{S}: \Sigma \vert x\vert = \vert x\vert\},\\
&=\mathcal{S}^{+/-}_{I_0}\oplus\mathcal{S}_{I_1}\oplus...\oplus\mathcal{S}_{I_m}.
\end{align*}
For any $\Sigma\in Sym(\vert x\vert)$, also $\Sigma^T=\Sigma^{-1}\in Sym(\vert x\vert)$, and:
\begin{equation*}
    J_{\boldsymbol{\lambda}}(x)=\langle \Pi_x\boldsymbol{\lambda},\Sigma^T\vert x\vert\rangle=\langle S_x\Sigma\Pi_x\boldsymbol{\lambda},x\rangle.
\end{equation*}
Hence $I(x)=\{S_x\Sigma\Pi_x:\Sigma\in Sym(\vert x\vert)\}$, and
\begin{align*}
    \partial J_{\boldsymbol{\lambda}}(x)&=con\{S_x\Sigma\Pi_x\boldsymbol{\lambda}:\Sigma\in Sym(\vert x\vert)\}\\
    & =con\{\mathcal{S}^{+/-}_{I_0}\Pi_x\boldsymbol{\lambda}\}\oplus con\{S_x\mathcal{S}_{I_1}\Pi_x\boldsymbol{\lambda}\}\oplus..\oplus con\{ S_x\mathcal{S}_{I_m}\Pi_x\boldsymbol{\lambda}\}.
\end{align*}

For illustration, let $x=(\textcolor{green}{0}, \textcolor{blue}{2}, \textcolor{blue}{-2}, \textcolor{red}{1}, \textcolor{blue}{2}, \textcolor{red}{1} )^T$, $\mathcal{I}(x)=\{\textcolor{green}{\{1\}},\textcolor{red}{\{4,6\}},\textcolor{blue}{\{2,3,5\}}\}$. Then
\\
\def\re{\color{red}}
\def\bl{\color{blue}}
\def\gr{\color{green}}
\def\black{\color{black}}
\[ \Sigma = 
\left(\begin{matrix}
\gr-1 & 0 & 0 & 0 & 0 & 0 \\
0 & \bl0 & \bl1 & 0 & \bl0 & 0 \\
0 & \bl0 & \bl0 & 0 & \bl1 & 0\\
0 & 0 & 0 & \re0 & 0 & \re1\\
0 & \bl1 & \bl0 & 0 & \bl0 & 0\\
0 & 0 & 0 & \re1 & 0 & \re0
\end{matrix} \right)\in Sym(\vert x\vert)=\textcolor{green}{\mathcal{S}^{+/-}_{I_0}}\oplus\textcolor{red}{{S}_{I_1}}\oplus\textcolor{blue}{\mathcal{S}_{I_2}}
\qquad \Pi_x = \left(\begin{matrix}
0 & 0 & 0 & 0 & 0 & \gr1 \\
\bl1 & 0 & 0 & 0 & 0 & 0 \\
0 & \bl1 & 0 & 0 & 0 & 0\\
0 & 0 & 0 & \re1 & 0 & 0\\
0 & 0 & \bl1 & 0 & 0 & 0\\
0 & 0 & 0 & 0 & \re1 & 0
\end{matrix} \right)
\]
$\Sigma\vert x\vert=(\textcolor{green}{0}, \textcolor{blue}{2}, \textcolor{blue}{2}, \textcolor{red}{1}, \textcolor{blue}{2}, \textcolor{red}{1} )^T=\vert x\vert$\hspace{6.3cm}$\Pi_x\boldsymbol{\lambda}=( \textcolor{green}{\lambda_6}, \textcolor{blue}{\lambda_1}, \textcolor{blue}{\lambda_2},  \textcolor{red}{\lambda_4}, \textcolor{blue}{\lambda_3}, \textcolor{red}{\lambda_5})^T$

\subsection{Pattern space}\label{Pattern space Appendix}
We show (\ref{pattern space representation}), 
i.e. if $f$ satisfies (\ref{general linear framework}), then the following vector spaces are the same:
  \begin{align}
        i) &\hspace{0.3cm}span\{u: I(u)=I(x)\},\nonumber\\
        ii) &\hspace{0.3cm}par(\partial f(x))^{\perp},\nonumber\\
        iii) &\hspace{0.3cm}\{u\in\mathbb{R}^p: I_x(u)=I(x)\}\nonumber.
    \end{align}
\begin{proof}
Recall that $I_x(u)=\operatorname{argmax}_{i\in I(x)}\langle v_i, u\rangle$. Thus
\begin{align*}
    I_x(u)=I(x)&\iff \langle v_i,u\rangle= \langle v_j,u\rangle \;\;\forall i,j\in I(x)\\
    &\iff \langle w-\tilde{w}, u\rangle = 0 \;\;\;\forall w,\tilde{w}\in \partial f(x)\\
    &\iff u\in par(\partial f(x))^{\perp},
\end{align*}
hence $ii)=iii)$. Also, if $I(u)=I(x)$, then $I_x(u)=\operatorname{argmax}_{i\in I(u)}\langle v_i, u\rangle = I(u)=I(x)$, thus $i)\subset iii)$, because $iii)$ is a vector space. For the opposite inclusion, let $I_x(u)=I(x)$. Since, $I_x(u)=\operatorname{lim}_{\varepsilon\downarrow 0}I(x+\varepsilon u)$, we know that $I_x(u)=I(x+\varepsilon u)$ for every $\varepsilon>0$ small enough. Therefore $u= \varepsilon^{-1}((x+\varepsilon u) - x) \in span\{ u: I(u)=I(x)\}$, because $I(x+\varepsilon u)=I_x(u)=I(x)$. 
\end{proof}

Further, we show (\ref{subdifferential as a face of the dual ball}), that $\partial f(x)= \partial f(0) \cap (v_0+\langle U_x\rangle^{\perp})$, where $v_0\in\partial f(x)$, provided $g(x)=0$ in (\ref{general linear framework}).
\begin{proof}
Since $\partial f(x)\subset \partial f(0)$, and $\partial f(x)-v_0 \in par(\partial f(x))=\langle U_x\rangle^{\perp}$ by (\ref{pattern space representation})ii), the $\subset$ inclusion follows. For the opposite inclusion, 
let $v\in \partial f(0) \cap (v_0+\langle U_x\rangle^{\perp})$, we have $v=\sum_{i\in\mathcal{S}}\lambda_i v_i$, $\sum_{i\in\mathcal{S}}\lambda_i=1,\lambda_i\geq0,$ and at the same time $v=v_0+\sum_{i\in I(x)}\alpha_i (v_i-v_0)$, $\alpha_i\in \mathbb{R}$. We obtain 
\begin{align*}
    \sum_{i\in I(x)}(\lambda_i-\alpha_i)(v_i-v_0) + \sum_{i\notin I(x)}\lambda_i(v_i-v_0)=0.
\end{align*}
Since $I(x)=\operatorname{argmax}_{i\in\mathcal{S}}\langle v_i,x\rangle$, we get $\langle v_i-v_0, x\rangle = 0 \;\forall i\in I(x)$ and $\langle v_i-v_0, x\rangle < 0 \;\forall i\notin I(x)$. Taking the inner product of the above expression with $x$ gives $\sum_{i\notin I(x)}\lambda_i\langle v_i-v_0,x\rangle=0$. Consequently, $\lambda_i=0$ for all $i\notin I(x)$, and $v=\sum_{i\in I(x)}\lambda_i v_i\in con\{v_i: i\in I(x)\} = \partial f(x)$.  
\end{proof}
\subsection{limiting pattern}\label{appendix limiting pattern}
We prove that the limiting pattern $I_x(u):=\operatorname{argmax}_{i\in I(x)}\langle v_i,u\rangle$ equals $\lim_{\varepsilon\downarrow0}I(x+\varepsilon u)$, where $I(x)=\operatorname{argmax}_{i\in\mathcal{S}}\langle v_i,x\rangle$ and we recall the penalty is given by $f(x)=\max\{v_i^Tx: i\in\mathcal{S}\}$.
\begin{proof}
For any fixed $x,u\in\mathbb{R}^p$; $I(x+\varepsilon u)\subset I(x)$ eventually as $\varepsilon\downarrow 0$. Indeed, by contradiction, assume that $i\in I(x+\varepsilon u)$, but $i\notin I(x)$. Then $\langle v_i,x\rangle<\langle v_{i_0}, x\rangle$ for some $i_0\in I(x)$, and as a result for sufficiently small $\varepsilon>0$, $\langle v_i, x+\varepsilon u\rangle<\langle v_{i_0}, x+\varepsilon u\rangle$. It follows that $i\notin I(x+\varepsilon u)$, a contradiction. As a result for $\varepsilon\downarrow 0$ the pattern eventually stabilizes at
\begin{align}\label{stabilization of limiting pattern}
    I(x+\varepsilon u) & = \underset{i\in\mathcal{S}}{\operatorname{argmax}}\langle v_i,x +\varepsilon u\rangle\nonumber\\
    & = \underset{i\in I(x)}{\operatorname{argmax}}\langle v_i,x +\varepsilon u\rangle\nonumber\\
    & = \underset{i\in I(x)}{\operatorname{argmax}}\langle v_i,u\rangle = I_x(u),
\end{align}
where we have used that $\langle v_i,x \rangle$ is the same for every $i\in I(x)$ by the definition of pattern $I(x)=\operatorname{argmax}_{i\in\mathcal{S}}\langle v_i,x\rangle$, which proves the claim.
\end{proof}

\subsection{Failure of weak pattern convergence}\label{appendix counterexample pattern convergence}
We present an example of a convex penalty, for which the error $\hat{u}_n$ converges in distribution to $\hat{u}$, but $sgn(\hat{u}_n)$ does not converge to $sgn(\hat{u})$. Consider the penalty $f(x)=\max\{x_1^2,x_2\}$ on $\mathbb{R}^2$, and let $f_n = n^{1/2}f$. Figure \ref{fig: flattening parabola}. illustrates, why the $sgn(\hat{u}_n)$ fails to converge to $sgn(\hat{u})$. Formally, for $C_n$ and $W_n$ as in (\ref{W_n C_n convergence}), by Theorem \ref{sqrt-asymptotic}, 
\begin{align*}
    \hat{u}_n=\sqrt{n}(\hat{\beta}_n-\beta^0)=&\operatorname{argmin}u^{T}C_n u/2-u^{T}W_n+ n^{1/2}[f(\beta^0+u/\sqrt{n})-f(\beta^0)]\\
    \overset{d}{\longrightarrow}&\operatorname{argmin}{u^{T}Cu/2-u^{T}W+ f'({\beta^0};u)}=:\hat{u}.
\end{align*} 
For $\beta^0=0$, we have $n^{1/2}[f(\beta^0+u/\sqrt{n})-f(\beta^0)]=\max\{n^{-1/2}u_1^2,u_2\}=:g_n(u)$, and on the half line $\mathcal{K}=\{u_1>0, u_2=0\}$; the subdifferential $\partial g_n(u)=(2u_1/\sqrt{n},0)^T$ is zero-dimensional. We obtain
\begin{equation*}
    \mathbb{P}\left[\hat{u}_{n}\in \mathcal{K}\right]=\mathbb{P}\left[W_n\in \left\{C_n \begin{pmatrix} u_1 \\ 0 \end{pmatrix}+\begin{pmatrix} 2u_1/\sqrt{n} \\ 0 \end{pmatrix}:u_1>0\right\}\right]=0\hspace{0,2cm}\forall n,
\end{equation*} 
provided $W_n$ is absolutely continuous w.r.t. the Lebesgue measure.
Furthermore, from (\ref{directional derivative equation}) we get $f'(0;u)=\max\{\langle(0, 0),(u_1,u_2)\rangle, \langle(0,1),(u_1,u_2)\rangle\}=\max\{0, u_2\}$, hence on $\mathcal{K}$ the subdifferential $\partial f'(0;u)=con\{(0,0)^T,(0,1)^T\}$ is one-dimensional. We get
\begin{equation*}
    \mathbb{P}\left[\hat{u}\in \mathcal{K}\right]=\mathbb{P}\left[W\in \left\{C \begin{pmatrix} u_1 \\ 0 \end{pmatrix}+\partial f'(0;u):u_1>0\right\}\right]>0,
\end{equation*}
since $C_{11}>0$. In particular, $sgn(\hat{u}_n)$ does not converge weakly to $sgn(\hat{u})$, despite the weak convergence of $\hat{u}_n$ to $\hat{u}$.

Observe that $\hat{u}_n$ puts positive mass on the parabola $\{u_2=n^{-1/2}u_1^2\}$, where $\partial g_n(u)$ is one-dimensional, whereas $\hat{u}$ puts positive mass on the tangential space of the parabola at $0$ given by $\{u_2=0\}$.
\begin{figure}[ht]
    \centering
\begin{tikzpicture}
  \draw[->] (0, 0) -- (0, 1) node[above] {};
  \draw[scale=0.5, domain=-4:4, line width = 0.3mm, dash pattern = on 4pt off 1pt, variable=\x, blue] plot ({\x}, {\x*\x/13}) node[above]{$\hat{u}_n$};
  \draw[scale=0.5, domain=-4.5:4.5, line width = 0.3mm, dash pattern = on 4pt off 1pt, variable=\x, blue] plot ({\x}, {\x*\x/21}) node[right]{$\hat{u}_{n+1}$};
  \draw[scale=0.5, domain=-5:5, line width = 0.3mm, dash pattern = on 2pt off 1pt, variable=\x, purple] plot ({\x}, {0}) node[right]{$\hat{u}$};
\end{tikzpicture}
\caption{$\hat{u}_n$ puts mass on parabola}
\label{fig: flattening parabola}
\end{figure}

More precisely, the Lebesgue decompositions of $\hat{u}_n$ and $\hat{u}$ w.r.t. the Lebesgue measure yield different singular sets; the parabola and the x-axis respectively. This gives some intuition for why linearity of the functions in the penalty $f=\max\{f_1,..,f_N\}$ is essential for convergence on convex sets. 

Notice that if we allow the pattern to change with $n$, we get weak convergence of $I_n(\hat{u}_n)$ to $I(\hat{u})$, which can be argued by the Portmanteau Lemma. Here, the pattern $I_n$ can be identified with the three regions of $\mathbb{R}^2$ determined by the parabola $\{u_2=n^{-1/2}u_1^2\}$.  

\subsection{Proofs}\label{apppendix proofs}
\begin{proof} (Lemma \ref{Hausdorff lemma})
 Let $\delta>0$ be arbitrary, and let $B^{-\delta}:=\{x\in B: d(x,B^c)>\delta\}$ denote the open $\delta-$ interior of $B$, where $B^c:=\mathbb{R}^p\setminus B$ is the complement of $B$. Note that $B^{-\delta}$ is open by continuity of $x\mapsto d(x, B^c)$. Also we denote the interior of a set $B$ by $B^{\circ}$. Since $B_n\overset{d_H}{\longrightarrow}B$, it follows that $B_n\subset B^{\delta}$ eventually. Similarly, for all sufficiently large $n$, we have $B\subset B_n^{\delta}$, thus $B^{-\delta}\subset (B_n^{\delta})^{-\delta}=B_n^{\circ}\subset B_n$, where the equality follows from convexity \footnote{Convexity is necessary: The annuli $B_n=\overline{B_1(0)}\setminus B_{1/n}(0)\overset{d_H}{\longrightarrow} \overline{B_1(0)}$, but $\overline{B_1(0)}^{-\delta}\not\subset B_n$. } and the fact that $(B_n^{\delta})^{-\delta}$ is an open set. As a result, for any $\delta>0$; $ B^{-\delta}\subset B_n\subset B^{\delta}\hspace{0,2cm} \text{eventually}$. Moreover, since $B$ is convex, and $W$ is absolutely continuous w.r.t. the Lebesgue measure, one can show that for every $\varepsilon>0$ there exists \footnote{In fact, the bounds with tubular sets hold uniformly over all convex sets; i.e., for each $\varepsilon>0$ there even exists a $\delta>0$ such that (\ref{B^delta approx}) holds for every convex set $B$. 
} a $\delta>0$ such that 
\begin{equation}\label{B^delta approx}
    \mathbb{P}[W\in B^{\delta}\hspace{0,1cm}]-\varepsilon\leq\mathbb{P}[W\in B]\leq\mathbb{P}[W\in B^{-\delta}]+\varepsilon,
\end{equation}
for an analogous statement, see for example proof of Corollary 2.7.9 \cite{vanderVaart1996}.  Consequently, for any $\varepsilon>0$ we can choose $\delta>0$ sufficiently small such that:
\begin{align*}
    \limsup_{n\rightarrow\infty}\mathbb{P}[W_n\in B_n]&\leq \limsup_{n\rightarrow\infty} \mathbb{P}[W_n\in B^{\delta}]\leq\mathbb{P}[W\in B^{\delta}]\leq\mathbb{P}[W\in B]+\varepsilon
    \\[5pt]
    \liminf_{n\rightarrow\infty}\mathbb{P}[W_n\in B_n]&\geq \liminf_{n\rightarrow\infty} \mathbb{P}[W_n\in B^{-\delta}]\geq\mathbb{P}[W\in B^{-\delta}]\geq\mathbb{P}[W\in B]-\varepsilon,
\end{align*}
where we have used the Portmanteau Lemma and the fact that $B^{\delta}$ and $B^{-\delta}$ are closed and open respectively. 
This shows the desired convergence $\mathbb{P}[W_n\in B_n]\longrightarrow \mathbb{P}[W\in B]$.
\end{proof}

\begin{proof}(Proposition \ref{concavification of Fused Lasso})
Recall the Fused Lasso penalty: $$f_A(\beta)=\lambda\Vert A\beta\Vert_1 = \lambda\sum_{i=1}^{p-1} a_i\vert \beta_{i+1}-\beta_i\vert + \lambda\sum_{i=1}^p a\vert \beta_i\vert, $$
with $a_i>0\;\forall i$ and $a, \lambda>0$. To recover all patterns, it is both sufficient and necessary that for every $\beta^0\in\mathbb{R}^p$;
$0$ lies in the relative interior of $(I-P)\partial f_A(\beta^0)$. We decompose this condition into a more tangible form. First, note that $\partial f_A(\beta^0) = A^T \partial f(A\beta^0)$. 
Let $\mathcal{I}(\beta^0)=\{I_1,I_2,\dots I_{m-1},I_m\}$ be the partition of $\beta^0$ into consecutive clusters. (Here, a cluster is a consecutive set of indices where $\beta^0_i$ have the same values.)

First, we assume that $a=0$. The pattern space of Fused Lasso is $\langle U_{\beta^0}\rangle=span\{\mathbf{1}_{I}: I\in \mathcal{I}(\beta^0)\}$, which is the span of $U_{\beta^0}=(\mathbf{1}_{I_1},\dots,\mathbf{1}_{I_m})$. Let $P=U_{\beta^0}(U_{\beta^0}^TU_{\beta^0})^{-1}U_{\beta^0}$ be the projection onto $\langle U_{\beta^0}\rangle$. The projection averages the values on each cluster, and decomposes as a block-diagonal matrix $P=P_{I_1}\oplus\dots \oplus P_{I_m}$, with $P_I= \mathbf{1}_{I} \mathbf{1}_{I}^T/\vert I\vert $. 
Given an arbitrary invertible matrix $E$, the irrepresentability condition is equivalent to $0\in  E(I-P)A^T ri(\partial f(A\beta^0))$. Here, we let $E=E_{I_1}\oplus\dots\oplus E_{I_m}$, with $(E_I)_{ij}=1$ if $i\leq j$ and $i,j\in I$,  $(E_I)_{ij}=0$ else. Now $E(I-P)=E_{I_1} (I-P_{I_1})\oplus\dots\oplus E_{I_m}(I-P_{I_m})$, and it suffices to verify if $0\in E_I(I-P_I)A^T\partial f(A\beta^0)$ for all $I\in\mathcal{I}(\beta^0)$.
For a cluster $I$ of size $k$, one can check that:
\begin{align*}
E_I(I-P_I)\cong\frac{1}{k}
\begin{pmatrix}
  k-1 & -1 & -1 & \hdots & -1 \\
  k-2& k-2 & -2 &\hdots & -2 \\
k-3& k-3 & k-3 &\hdots & -3 \\
  \vdots & \vdots & \vdots & \ddots & \vdots \\
  1 & 1 & 1 & \hdots & -(k-1)\\
  0 & 0 & 0 & \hdots & 0
\end{pmatrix},
\end{align*}
We shall call an inner cluster $I_j$ \textit{monotone}, if $(\beta^0_{I_{j-1}},\beta^0_{I_{j}},\beta^0_{I_{j+1}})$ is monotone, otherwise, we call $I_j$ \textit{extremal}.
Let $I\in\{I_2,\dots, I_{m-1}\}$ be an inner cluster. Denoting the corresponding clustering penalties $(a_0^I, a_1^I,\dots,a_{k-1}^I,a_k^I)$, one can verify:
\begin{align*}
E_I(I-P_I)A^T\partial f(A\beta^0)\cong
\underbrace{\frac{1}{k}
\begin{pmatrix}
  -(k-1) & k & 0 & 0 & \hdots & 0 & -1 \\
  -(k-2) & 0 & k & 0 &\hdots & 0 & -2 \\
  -(k-3) & 0 & 0 & k &\hdots & 0 & -3 \\
  \vdots & \vdots & \vdots & \vdots & \ddots & \vdots \\
  -1 & 0 & 0 & 0 & \hdots & k & -(k-1)\\
   0 & 0 & 0 & 0 & \hdots & 0 & 0
\end{pmatrix}
}_{\cong \; E_I(I-P_I)A^T}
\underbrace{
\begin{pmatrix}
s_1a^I_0\\
[-a^I_1,a^I_1]\\
\vdots\\
[-a^I_{k-1},a^I_{k-1}]\\
s_2a^I_k
\end{pmatrix},
}_{\cong\; \partial f(A\beta^0)}
\end{align*}
where $s_1,s_2\in\{1,-1\}$, with $s_1=s_2$ if $I$ is monotone and $s_1\neq s_2$ if $I$ is extremal. 
Crucially, zero will fall into the interior, if and only if
\begin{equation}\label{fused lasso irrep equation}
((k-i)/k) s_1a^I_0 + (i/k) s_2a^I_k \in (-a^I_i,a^I_i)
\end{equation}

for every $1\leq i \leq k -1$. This is satisfied if $(a_0^I,\dots ,a_k^I)$ is strictly concave. For a boundary cluster $I=I_1$ resp. $I=I_m$, the above condition remains the same, but with setting $a^{I_1}_0=0$ resp. $a^{I_m}_{k_m}=0$. 
Then concavity of $(0, a^{I_1}_1,\dots,a^{I_1}_{k_1})$ resp. $(a^{I_m}_0,\dots,a^{I_m}_{k-1}, 0)$, yields the above condition, (irrespective of $s_1,s_2$). This shows that strict concavity of $(0,a_1,\dots,a_{p-1},0)$ is sufficient for recovering all non-zero clusters.

Conversely, strict concavity is also necessary. A penalty sequence, which is not strictly concave, contains a triple $(a_{i_1}, a_{i_2},a_{i_3})$, $0\leq i_1<i_2<i_3\leq p$, with $((i_3-i_2)/(i_3-i_1))a_{i_1}+((i_2-i_1)/(i_3-i_1))a_{i_3}\geq a_{i_2}$. Seting $I=\{i_1+1,\dots, i_3\}$, $k=\vert I\vert = i_3-i_1$ and $a_0^I=a_{i_1}, a_i^I=a_{i_2}, a_k^I=a_{i_3}$, $i=i_2-i_1$, this implies the converse of (\ref{fused lasso irrep equation}), i.e:
\begin{equation*}
    ((k-i)/k)s_1 a^I_0 + (i/k) s_2a^I_k \notin (-a^I_i,a^I_i),
\end{equation*}
whenever $s_1=s_2$ or $a_0^I=0$ or $a_k^I=0$. If $0<i_1$ and $i_3<p$, for a monotone $I$, $s_1=s_2$. If $i_1=0$ or $i_3=p$, we get $a^I_0=0$ resp. $a^I_k=0$. In either case, (\ref{fused lasso irrep equation}) is violated, thus the cluster $I$ cannot be recovered with high probability.  

Now, assume sparsity penalty $a>0$ in $A$, and let $I\in\mathcal{I}(\beta^0)$ be a zero cluster (of consecutive zeros). Then $P_I=0$, and one can verify
\begin{align*}
E_I(I-P_I)A^T\partial f(A\beta^0)\cong
\begin{pmatrix}
-a^I_0&+&[-a^I_1,a^I_1]&+&1[-a,a]\\
-a^I_0&+&[-a^I_2,a^I_2]&+&2[-a,a]\\
\vdots\\
-a^I_0&+&[-a^I_{k-1},a^I_{k-1}]&+&(k-1)[-a,a]\\
-a^I_0&+&-a^I_k&+&k[-a,a]
\end{pmatrix}.
\end{align*}
This set will contain $0$ in its interior, provided that $a>\max\{a_i +a_{i+1} : 0\leq i\leq p-1\}$. The condition is easily satisfied for the first $k-1$ equations, with much room to spare. However, it is also necessary in case $\vert I\vert=k=1$, where the last row yields $-a_0^I-a_1^I + [-a,a]$. Then $0\in ri(-a_0^I-a_1^I + [-a,a])$ if and only if $a>a_0^I+a_1^I$. This finishes the proof.
\end{proof}

\begin{proof}(Proposition \ref{attainability proposition})
By the optimality (\ref{main optimality condition}),
\begin{align*}
    I(\hat{u})=\mathfrak{p} &\iff W \in C I^{-1}(\mathfrak{p}) + \partial f(\mathfrak{q})\\
    &\iff C^{-1/2}(W-v_0)\in C^{1/2}I^{-1}(\mathfrak{p}) + C^{-1/2}(\partial f(\mathfrak{q})-v_0),
\end{align*}
for any $v_0\in\partial f(\mathfrak{q})$.
This event occurs with positive probability if and only if the above sum is a full-dimensional\footnote{We define the dimension of a set as the dimension of its affine space.} subset in $\mathbb{R}^p$, because $C$ is invertible and $W$ is continuous w.r.t. the Lebesgue measure.
 By (\ref{general orthogonality in subdifferentials}), $C^{1/2}I^{-1}(\mathfrak{p})\perp C^{-1/2}(\partial f(\mathfrak{q})-v_0)$, because $\langle U_{\mathfrak{p}}\rangle = span\{I^{-1}(\mathfrak{p})\}$ and $\partial f(\mathfrak{q})\subset \partial f(\mathfrak{p})$. Therefore, 
 \begin{align*}
     dim(C^{1/2}I^{-1}(\mathfrak{p}) + C^{-1/2}(\partial f(\mathfrak{q})-v_0))=dim(C^{1/2}I^{-1}(\mathfrak{p})) + dim(C^{-1/2}(\partial f(\mathfrak{q})-v_0)),
 \end{align*}
 which equals $p$ if and only if $dim(\partial f(\mathfrak{q}))=dim(\partial f(\mathfrak{p}))$. By (\ref{pattern space representation}), this is equivalent to $dim\langle U_{\mathfrak{q}}\rangle=dim\langle U_{\mathfrak{p}}\rangle$, which is in turn equivalent to  $\langle U_{\mathfrak{q}}\rangle=\langle U_{\mathfrak{p}}\rangle$, since $\langle U_{\mathfrak{p}}\rangle\subset\langle U_{\mathfrak{q}}\rangle$.
\end{proof}


\subsection{Proximal operator}\label{proximal operator}
If the proximal operator to $u\mapsto f'(\beta^0;u)$ is known, one can solve $(\ref{V(u)})$ using proximal methods. Here we compute the proximal operator for the directional SLOPE derivative $u\mapsto J'_{\boldsymbol{\lambda}}({\beta^0};u)$:
\begin{equation*}
\text{prox}_{J'_{\boldsymbol{\lambda}}(\beta^0,\cdot)}(y):=\underset{u\in\mathbb{R}^p}{\operatorname{argmin }}\hspace{0.2cm} (1/2)\Vert u-y\Vert_2^2 + J'_{\boldsymbol{\lambda}}({\beta^0};u)
\end{equation*}


Let $\mathcal{I}(\beta^0)=\{I_0, I_1, ..,I_m\}$ be the partition of $\beta^0$ into the clusters of the same magnitude. The directional SLOPE derivative $J'_{\boldsymbol{\lambda}}({\beta^0};u)$ is separable:
\begin{equation*}
    J'_{\boldsymbol{\lambda}}({\beta^0};u) = J^{I_0}_{\boldsymbol{\lambda}}(u) + J^{I_1}_{\boldsymbol{\lambda},{\beta^0}}(u) + ... + J^{I_m}_{\boldsymbol{\lambda},{\beta^0}}(u),
\end{equation*} 
with
\begin{align*}
     J^{I_0}_{\boldsymbol{\lambda}}(u) & =\sum_{i\in I_0}\boldsymbol{\lambda}_{\pi(i)}\vert u_i\vert, \\
      J^{I_j}_{\boldsymbol{\lambda},{\beta^0}}(u) & =\sum_{i\in I_j}\boldsymbol{\lambda}_{\pi(i)}u_i sgn(\beta^0_i),
\end{align*}
where the permutation $\pi$ in $J'_{\boldsymbol{\lambda}}({\beta^0};u)$ sorts the limiting pattern of $u$ w.r.t. $\beta^0$, i.e; $\vert \mathfrak{p}_0 \vert_{\pi^{-1}(1)}\geq\dots\geq\vert \mathfrak{p}_0 \vert_{\pi^{-1}(p)}$, with $\mathfrak{p}_0=\mathbf{patt}_{\beta^0}(u)$.

 Hence 
 \begin{equation*}
     \text{prox}_{J_{\boldsymbol{\lambda},\beta^0}}(y)=\text{prox}_{J^{I_0}_{\boldsymbol{\lambda},{\beta^0}}}(y)\oplus\text{prox}_{J^{I_1}_{\boldsymbol{\lambda},{\beta^0}}}(y)\oplus\dots\oplus\text{prox}_{J^{I_m}_{\boldsymbol{\lambda},{\beta^0}}}(y)
 \end{equation*}
 
 Since we can treat each cluster separately, we can w.l.o.g. assume that $\beta^0$ consists of one cluster only. There are only two possible cases:
 
 In the first case, $\beta^0=0$ and the proximal operator is described in \cite{bogdan2015slope}:
\begin{align}
    \text{prox}_{J_{\boldsymbol{\lambda}}}(y)&=\underset{u\in\mathbb{R}^p}{\operatorname{argmin }}\hspace{0.2cm} (1/2)\Vert u-y\Vert_2^2 + J_{\boldsymbol{\lambda}}(u)\nonumber\\
   &=S_{y}\Pi_{y}\underset{\tilde{u}_1\geq\dots\geq\tilde{u}_p\geq0 }{\operatorname{argmin }}\hspace{0.2cm} (1/2)\Vert \tilde{u}-\vert y\vert_{(\cdot)}\Vert_2^2 + \sum_{i=1}^p\lambda_i\tilde{u}_i,\label{slope proximal operator optimization}
\end{align}
where $\vert y\vert_{(\cdot)}=\Pi^T_yS_{y}y$  arises by sorting the absolute values of $y$. (See Proposition 2.2 in \cite{bogdan2015slope} and notation in the section Subdifferential and Pattern.)

In the second case, $\beta^0$ consists of a single non zero cluster. In this case the penalty becomes $J'_{\boldsymbol{\lambda}}({\beta^0};u)=\sum_{i=1}^p \boldsymbol{\lambda}_{\pi(i)}(S_{\beta^0}u)_i=\sum_{i=1}^p \boldsymbol{\lambda}_{i}(S_{\beta^0}u)_{\pi^{-1}(i)}$, where $ (S_{\beta^0}u) _{\pi^{-1}(1)}\geq\dots\geq (S_{\beta^0}u) _{\pi^{-1}(p)}$. In particular, $J_{\boldsymbol{\lambda},\beta^0}(S_{\beta^0}u)=\sum_{i=1}^p \boldsymbol{\lambda}_{i}u_{\pi^{-1}(i)}=\sum_{i=1}^p \boldsymbol{\lambda}_{i}u_{(i)}$, with $ u _{(1)}\geq\dots\geq u _{(p)}$.

\begin{align}
    \text{prox}_{J_{\boldsymbol{\lambda},\beta^0}}(y)&=\underset{u\in\mathbb{R}^p}{\operatorname{argmin }}\hspace{0.2cm} (1/2)\Vert u-y\Vert_2^2 + J'_{\boldsymbol{\lambda}}({\beta^0};u)\nonumber\\
   & = S_{\beta^0}\underset{\tilde{u}\in\mathbb{R}^p }{\operatorname{argmin }}\hspace{0.2cm} (1/2)\Vert \tilde{u}- S_{\beta^0}y\Vert_2^2 + \sum_{i=1}^p\lambda_i\tilde{u}_{(i)}\nonumber\\
   & =  S_{\beta^0}\Pi\underset{\tilde{u}_1\geq\dots\geq\tilde{u}_p}{\operatorname{argmin }}\hspace{0.2cm} (1/2)\Vert \tilde{u}-(S_{\beta^0}y)_{(\cdot)}\Vert_2^2 + \sum_{i=1}^p\lambda_i\tilde{u}_i,\label{isotonic optimization}
\end{align}

where $(S_{\beta^0}y)_{(\cdot)}=\Pi ^T (S_{\beta^0}y)$ is the sorted \footnote{Note that the permutation matrix $\Pi$ depends both on $y$ and $\beta^0$. } $(S_{\beta^0}y)$ vector. The optimization problem in (\ref{isotonic optimization}) is very similar to the optimization problem in (\ref{slope proximal operator optimization}). The only difference is in the relaxed constraint, where the set of feasible solutions in (\ref{isotonic optimization}) allows for negative values. The optimization (\ref{isotonic optimization}) is a special case of the isotonic regression problem \cite{10.2307/2284712}:
\begin{align*}
\operatorname{minimize }\hspace{0.5cm} & \Vert x-z\Vert_2^2\nonumber \\
\text{subject to}\hspace{0.5cm} & x_1\geq\dots\geq x_p,
\end{align*}
where we set $z= (S_{\beta^0}y)_{(\cdot)} - \boldsymbol{\lambda} $.

\end{appendix}

\end{document}